\newdimen\arrowsize
\tikzset{rhrhombithickside/.style={ultra thick}}
\tikzset{rhrhombiarrow/.style={}}
\tikzset{rhrhombifinearrow/.style={line width=1pt}}
\tikzset{rhrhombipatharrow/.style={thin}}
\tikzset{rhrhombidraw/.style={ultra thin}}
\tikzset{rhrhombifill/.style={thick,fill=black!40}}
\newcommand{\FIGtriangulararray}{\begin{tikzpicture}
\draw[rhrhombidraw] (0.0pt,0.0pt) -- (-4.619pt,-8.0pt) -- (4.619pt,-8.0pt) -- cycle;
\draw[rhrhombidraw] (-4.619pt,-8.0pt) -- (-9.238pt,-16.0pt) -- (0.0pt,-16.0pt) -- cycle;
\draw[rhrhombidraw] (4.619pt,-8.0pt) -- (0.0pt,-16.0pt) -- (9.238pt,-16.0pt) -- cycle;
\draw[rhrhombidraw] (-9.238pt,-16.0pt) -- (-13.857pt,-24.0pt) -- (-4.619pt,-24.0pt) -- cycle;
\draw[rhrhombidraw] (0.0pt,-16.0pt) -- (-4.619pt,-24.0pt) -- (4.619pt,-24.0pt) -- cycle;
\draw[rhrhombidraw] (9.238pt,-16.0pt) -- (4.619pt,-24.0pt) -- (13.857pt,-24.0pt) -- cycle;
\draw[rhrhombidraw] (-13.857pt,-24.0pt) -- (-18.476pt,-32.0pt) -- (-9.238pt,-32.0pt) -- cycle;
\draw[rhrhombidraw] (-4.619pt,-24.0pt) -- (-9.238pt,-32.0pt) -- (0.0pt,-32.0pt) -- cycle;
\draw[rhrhombidraw] (4.619pt,-24.0pt) -- (0.0pt,-32.0pt) -- (9.238pt,-32.0pt) -- cycle;
\draw[rhrhombidraw] (13.857pt,-24.0pt) -- (9.238pt,-32.0pt) -- (18.476pt,-32.0pt) -- cycle;
\draw[rhrhombidraw] (-18.476pt,-32.0pt) -- (-23.095pt,-40.0pt) -- (-13.857pt,-40.0pt) -- cycle;
\draw[rhrhombidraw] (-9.238pt,-32.0pt) -- (-13.857pt,-40.0pt) -- (-4.619pt,-40.0pt) -- cycle;
\draw[rhrhombidraw] (0.0pt,-32.0pt) -- (-4.619pt,-40.0pt) -- (4.619pt,-40.0pt) -- cycle;
\draw[rhrhombidraw] (9.238pt,-32.0pt) -- (4.619pt,-40.0pt) -- (13.857pt,-40.0pt) -- cycle;
\draw[rhrhombidraw] (18.476pt,-32.0pt) -- (13.857pt,-40.0pt) -- (23.095pt,-40.0pt) -- cycle;
\end{tikzpicture}}
\newcommand{\FIGdualtriangulararray}{\begin{tikzpicture}
\draw[rhrhombidraw] (0.0pt,0.0pt) -- (-9.238pt,-16.0pt) -- (9.238pt,-16.0pt) -- cycle;
\draw (-4.619pt,-8.0pt) -- (0.0pt,-10.656pt);
\draw (4.619pt,-8.0pt) -- (0.0pt,-10.656pt);
\draw (0.0pt,-16.0pt) -- (0.0pt,-10.656pt);
\draw (0.0pt,-16.0pt) -- (0.0pt,-21.344pt);
\draw (-4.619pt,-24.0pt) -- (0.0pt,-21.344pt);
\draw (4.619pt,-24.0pt) -- (0.0pt,-21.344pt);
\fill[color=white] (-4.619pt,-8.0pt) circle (1.2pt);
\draw[rhrhombidraw] (-4.619pt,-8.0pt) circle (1.2pt);
\fill[color=white] (4.619pt,-8.0pt) circle (1.2pt);
\draw[rhrhombidraw] (4.619pt,-8.0pt) circle (1.2pt);
\fill[color=white] (0.0pt,-16.0pt) circle (1.2pt);
\draw[rhrhombidraw] (0.0pt,-16.0pt) circle (1.2pt);
\fill (0.0pt,-10.656pt) circle (1.5pt);
\fill (0.0pt,-21.344pt) circle (1.5pt);
\draw[rhrhombidraw] (-9.238pt,-16.0pt) -- (-18.476pt,-32.0pt) -- (0.0pt,-32.0pt) -- cycle;
\draw (-13.857pt,-24.0pt) -- (-9.238pt,-26.656pt);
\draw (-4.619pt,-24.0pt) -- (-9.238pt,-26.656pt);
\draw (-9.238pt,-32.0pt) -- (-9.238pt,-26.656pt);
\draw (-9.238pt,-32.0pt) -- (-9.238pt,-37.344pt);
\draw (-13.857pt,-40.0pt) -- (-9.238pt,-37.344pt);
\draw (-4.619pt,-40.0pt) -- (-9.238pt,-37.344pt);
\fill[color=white] (-13.857pt,-24.0pt) circle (1.2pt);
\draw[rhrhombidraw] (-13.857pt,-24.0pt) circle (1.2pt);
\fill[color=white] (-4.619pt,-24.0pt) circle (1.2pt);
\draw[rhrhombidraw] (-4.619pt,-24.0pt) circle (1.2pt);
\fill[color=white] (-9.238pt,-32.0pt) circle (1.2pt);
\draw[rhrhombidraw] (-9.238pt,-32.0pt) circle (1.2pt);
\fill (-9.238pt,-26.656pt) circle (1.5pt);
\fill (-9.238pt,-37.344pt) circle (1.5pt);
\draw[rhrhombidraw] (9.238pt,-16.0pt) -- (0.0pt,-32.0pt) -- (18.476pt,-32.0pt) -- cycle;
\draw (4.619pt,-24.0pt) -- (9.238pt,-26.656pt);
\draw (13.857pt,-24.0pt) -- (9.238pt,-26.656pt);
\draw (9.238pt,-32.0pt) -- (9.238pt,-26.656pt);
\draw (9.238pt,-32.0pt) -- (9.238pt,-37.344pt);
\draw (4.619pt,-40.0pt) -- (9.238pt,-37.344pt);
\draw (13.857pt,-40.0pt) -- (9.238pt,-37.344pt);
\fill[color=white] (4.619pt,-24.0pt) circle (1.2pt);
\draw[rhrhombidraw] (4.619pt,-24.0pt) circle (1.2pt);
\fill[color=white] (13.857pt,-24.0pt) circle (1.2pt);
\draw[rhrhombidraw] (13.857pt,-24.0pt) circle (1.2pt);
\fill[color=white] (9.238pt,-32.0pt) circle (1.2pt);
\draw[rhrhombidraw] (9.238pt,-32.0pt) circle (1.2pt);
\fill (9.238pt,-26.656pt) circle (1.5pt);
\fill (9.238pt,-37.344pt) circle (1.5pt);
\draw[rhrhombidraw] (-18.476pt,-32.0pt) -- (-27.714pt,-48.0pt) -- (-9.238pt,-48.0pt) -- cycle;
\draw (-23.095pt,-40.0pt) -- (-18.476pt,-42.656pt);
\draw (-13.857pt,-40.0pt) -- (-18.476pt,-42.656pt);
\draw (-18.476pt,-48.0pt) -- (-18.476pt,-42.656pt);
\draw (-18.476pt,-48.0pt) -- (-18.476pt,-53.344pt);
\draw (-23.095pt,-56.0pt) -- (-18.476pt,-53.344pt);
\draw (-13.857pt,-56.0pt) -- (-18.476pt,-53.344pt);
\fill[color=white] (-23.095pt,-40.0pt) circle (1.2pt);
\draw[rhrhombidraw] (-23.095pt,-40.0pt) circle (1.2pt);
\fill[color=white] (-13.857pt,-40.0pt) circle (1.2pt);
\draw[rhrhombidraw] (-13.857pt,-40.0pt) circle (1.2pt);
\fill[color=white] (-18.476pt,-48.0pt) circle (1.2pt);
\draw[rhrhombidraw] (-18.476pt,-48.0pt) circle (1.2pt);
\fill (-18.476pt,-42.656pt) circle (1.5pt);
\fill (-18.476pt,-53.344pt) circle (1.5pt);
\draw[rhrhombidraw] (0.0pt,-32.0pt) -- (-9.238pt,-48.0pt) -- (9.238pt,-48.0pt) -- cycle;
\draw (-4.619pt,-40.0pt) -- (0.0pt,-42.656pt);
\draw (4.619pt,-40.0pt) -- (0.0pt,-42.656pt);
\draw (0.0pt,-48.0pt) -- (0.0pt,-42.656pt);
\draw (0.0pt,-48.0pt) -- (0.0pt,-53.344pt);
\draw (-4.619pt,-56.0pt) -- (0.0pt,-53.344pt);
\draw (4.619pt,-56.0pt) -- (0.0pt,-53.344pt);
\fill[color=white] (-4.619pt,-40.0pt) circle (1.2pt);
\draw[rhrhombidraw] (-4.619pt,-40.0pt) circle (1.2pt);
\fill[color=white] (4.619pt,-40.0pt) circle (1.2pt);
\draw[rhrhombidraw] (4.619pt,-40.0pt) circle (1.2pt);
\fill[color=white] (0.0pt,-48.0pt) circle (1.2pt);
\draw[rhrhombidraw] (0.0pt,-48.0pt) circle (1.2pt);
\fill (0.0pt,-42.656pt) circle (1.5pt);
\fill (0.0pt,-53.344pt) circle (1.5pt);
\draw[rhrhombidraw] (18.476pt,-32.0pt) -- (9.238pt,-48.0pt) -- (27.714pt,-48.0pt) -- cycle;
\draw (13.857pt,-40.0pt) -- (18.476pt,-42.656pt);
\draw (23.095pt,-40.0pt) -- (18.476pt,-42.656pt);
\draw (18.476pt,-48.0pt) -- (18.476pt,-42.656pt);
\draw (18.476pt,-48.0pt) -- (18.476pt,-53.344pt);
\draw (13.857pt,-56.0pt) -- (18.476pt,-53.344pt);
\draw (23.095pt,-56.0pt) -- (18.476pt,-53.344pt);
\fill[color=white] (13.857pt,-40.0pt) circle (1.2pt);
\draw[rhrhombidraw] (13.857pt,-40.0pt) circle (1.2pt);
\fill[color=white] (23.095pt,-40.0pt) circle (1.2pt);
\draw[rhrhombidraw] (23.095pt,-40.0pt) circle (1.2pt);
\fill[color=white] (18.476pt,-48.0pt) circle (1.2pt);
\draw[rhrhombidraw] (18.476pt,-48.0pt) circle (1.2pt);
\fill (18.476pt,-42.656pt) circle (1.5pt);
\fill (18.476pt,-53.344pt) circle (1.5pt);
\draw[rhrhombidraw] (-27.714pt,-48.0pt) -- (-36.952pt,-64.0pt) -- (-18.476pt,-64.0pt) -- cycle;
\draw (-32.333pt,-56.0pt) -- (-27.714pt,-58.656pt);
\draw (-23.095pt,-56.0pt) -- (-27.714pt,-58.656pt);
\draw (-27.714pt,-64.0pt) -- (-27.714pt,-58.656pt);
\draw (-27.714pt,-64.0pt) -- (-27.714pt,-69.344pt);
\draw (-32.333pt,-72.0pt) -- (-27.714pt,-69.344pt);
\draw (-23.095pt,-72.0pt) -- (-27.714pt,-69.344pt);
\fill[color=white] (-32.333pt,-56.0pt) circle (1.2pt);
\draw[rhrhombidraw] (-32.333pt,-56.0pt) circle (1.2pt);
\fill[color=white] (-23.095pt,-56.0pt) circle (1.2pt);
\draw[rhrhombidraw] (-23.095pt,-56.0pt) circle (1.2pt);
\fill[color=white] (-27.714pt,-64.0pt) circle (1.2pt);
\draw[rhrhombidraw] (-27.714pt,-64.0pt) circle (1.2pt);
\fill (-27.714pt,-58.656pt) circle (1.5pt);
\fill (-27.714pt,-69.344pt) circle (1.5pt);
\draw[rhrhombidraw] (-9.238pt,-48.0pt) -- (-18.476pt,-64.0pt) -- (0.0pt,-64.0pt) -- cycle;
\draw (-13.857pt,-56.0pt) -- (-9.238pt,-58.656pt);
\draw (-4.619pt,-56.0pt) -- (-9.238pt,-58.656pt);
\draw (-9.238pt,-64.0pt) -- (-9.238pt,-58.656pt);
\draw (-9.238pt,-64.0pt) -- (-9.238pt,-69.344pt);
\draw (-13.857pt,-72.0pt) -- (-9.238pt,-69.344pt);
\draw (-4.619pt,-72.0pt) -- (-9.238pt,-69.344pt);
\fill[color=white] (-13.857pt,-56.0pt) circle (1.2pt);
\draw[rhrhombidraw] (-13.857pt,-56.0pt) circle (1.2pt);
\fill[color=white] (-4.619pt,-56.0pt) circle (1.2pt);
\draw[rhrhombidraw] (-4.619pt,-56.0pt) circle (1.2pt);
\fill[color=white] (-9.238pt,-64.0pt) circle (1.2pt);
\draw[rhrhombidraw] (-9.238pt,-64.0pt) circle (1.2pt);
\fill (-9.238pt,-58.656pt) circle (1.5pt);
\fill (-9.238pt,-69.344pt) circle (1.5pt);
\draw[rhrhombidraw] (9.238pt,-48.0pt) -- (0.0pt,-64.0pt) -- (18.476pt,-64.0pt) -- cycle;
\draw (4.619pt,-56.0pt) -- (9.238pt,-58.656pt);
\draw (13.857pt,-56.0pt) -- (9.238pt,-58.656pt);
\draw (9.238pt,-64.0pt) -- (9.238pt,-58.656pt);
\draw (9.238pt,-64.0pt) -- (9.238pt,-69.344pt);
\draw (4.619pt,-72.0pt) -- (9.238pt,-69.344pt);
\draw (13.857pt,-72.0pt) -- (9.238pt,-69.344pt);
\fill[color=white] (4.619pt,-56.0pt) circle (1.2pt);
\draw[rhrhombidraw] (4.619pt,-56.0pt) circle (1.2pt);
\fill[color=white] (13.857pt,-56.0pt) circle (1.2pt);
\draw[rhrhombidraw] (13.857pt,-56.0pt) circle (1.2pt);
\fill[color=white] (9.238pt,-64.0pt) circle (1.2pt);
\draw[rhrhombidraw] (9.238pt,-64.0pt) circle (1.2pt);
\fill (9.238pt,-58.656pt) circle (1.5pt);
\fill (9.238pt,-69.344pt) circle (1.5pt);
\draw[rhrhombidraw] (27.714pt,-48.0pt) -- (18.476pt,-64.0pt) -- (36.952pt,-64.0pt) -- cycle;
\draw (23.095pt,-56.0pt) -- (27.714pt,-58.656pt);
\draw (32.333pt,-56.0pt) -- (27.714pt,-58.656pt);
\draw (27.714pt,-64.0pt) -- (27.714pt,-58.656pt);
\draw (27.714pt,-64.0pt) -- (27.714pt,-69.344pt);
\draw (23.095pt,-72.0pt) -- (27.714pt,-69.344pt);
\draw (32.333pt,-72.0pt) -- (27.714pt,-69.344pt);
\fill[color=white] (23.095pt,-56.0pt) circle (1.2pt);
\draw[rhrhombidraw] (23.095pt,-56.0pt) circle (1.2pt);
\fill[color=white] (32.333pt,-56.0pt) circle (1.2pt);
\draw[rhrhombidraw] (32.333pt,-56.0pt) circle (1.2pt);
\fill[color=white] (27.714pt,-64.0pt) circle (1.2pt);
\draw[rhrhombidraw] (27.714pt,-64.0pt) circle (1.2pt);
\fill (27.714pt,-58.656pt) circle (1.5pt);
\fill (27.714pt,-69.344pt) circle (1.5pt);
\draw[rhrhombidraw] (-36.952pt,-64.0pt) -- (-46.19pt,-80.0pt) -- (-27.714pt,-80.0pt) -- cycle;
\draw (-41.571pt,-72.0pt) -- (-36.952pt,-74.656pt);
\draw (-32.333pt,-72.0pt) -- (-36.952pt,-74.656pt);
\draw (-36.952pt,-80.0pt) -- (-36.952pt,-74.656pt);
\fill[color=white] (-41.571pt,-72.0pt) circle (1.2pt);
\draw[rhrhombidraw] (-41.571pt,-72.0pt) circle (1.2pt);
\fill[color=white] (-32.333pt,-72.0pt) circle (1.2pt);
\draw[rhrhombidraw] (-32.333pt,-72.0pt) circle (1.2pt);
\fill[color=white] (-36.952pt,-80.0pt) circle (1.2pt);
\draw[rhrhombidraw] (-36.952pt,-80.0pt) circle (1.2pt);
\fill (-36.952pt,-74.656pt) circle (1.5pt);
\draw[rhrhombidraw] (-18.476pt,-64.0pt) -- (-27.714pt,-80.0pt) -- (-9.238pt,-80.0pt) -- cycle;
\draw (-23.095pt,-72.0pt) -- (-18.476pt,-74.656pt);
\draw (-13.857pt,-72.0pt) -- (-18.476pt,-74.656pt);
\draw (-18.476pt,-80.0pt) -- (-18.476pt,-74.656pt);
\fill[color=white] (-23.095pt,-72.0pt) circle (1.2pt);
\draw[rhrhombidraw] (-23.095pt,-72.0pt) circle (1.2pt);
\fill[color=white] (-13.857pt,-72.0pt) circle (1.2pt);
\draw[rhrhombidraw] (-13.857pt,-72.0pt) circle (1.2pt);
\fill[color=white] (-18.476pt,-80.0pt) circle (1.2pt);
\draw[rhrhombidraw] (-18.476pt,-80.0pt) circle (1.2pt);
\fill (-18.476pt,-74.656pt) circle (1.5pt);
\draw[rhrhombidraw] (0.0pt,-64.0pt) -- (-9.238pt,-80.0pt) -- (9.238pt,-80.0pt) -- cycle;
\draw (-4.619pt,-72.0pt) -- (0.0pt,-74.656pt);
\draw (4.619pt,-72.0pt) -- (0.0pt,-74.656pt);
\draw (0.0pt,-80.0pt) -- (0.0pt,-74.656pt);
\fill[color=white] (-4.619pt,-72.0pt) circle (1.2pt);
\draw[rhrhombidraw] (-4.619pt,-72.0pt) circle (1.2pt);
\fill[color=white] (4.619pt,-72.0pt) circle (1.2pt);
\draw[rhrhombidraw] (4.619pt,-72.0pt) circle (1.2pt);
\fill[color=white] (0.0pt,-80.0pt) circle (1.2pt);
\draw[rhrhombidraw] (0.0pt,-80.0pt) circle (1.2pt);
\fill (0.0pt,-74.656pt) circle (1.5pt);
\draw[rhrhombidraw] (18.476pt,-64.0pt) -- (9.238pt,-80.0pt) -- (27.714pt,-80.0pt) -- cycle;
\draw (13.857pt,-72.0pt) -- (18.476pt,-74.656pt);
\draw (23.095pt,-72.0pt) -- (18.476pt,-74.656pt);
\draw (18.476pt,-80.0pt) -- (18.476pt,-74.656pt);
\fill[color=white] (13.857pt,-72.0pt) circle (1.2pt);
\draw[rhrhombidraw] (13.857pt,-72.0pt) circle (1.2pt);
\fill[color=white] (23.095pt,-72.0pt) circle (1.2pt);
\draw[rhrhombidraw] (23.095pt,-72.0pt) circle (1.2pt);
\fill[color=white] (18.476pt,-80.0pt) circle (1.2pt);
\draw[rhrhombidraw] (18.476pt,-80.0pt) circle (1.2pt);
\fill (18.476pt,-74.656pt) circle (1.5pt);
\draw[rhrhombidraw] (36.952pt,-64.0pt) -- (27.714pt,-80.0pt) -- (46.19pt,-80.0pt) -- cycle;
\draw (32.333pt,-72.0pt) -- (36.952pt,-74.656pt);
\draw (41.571pt,-72.0pt) -- (36.952pt,-74.656pt);
\draw (36.952pt,-80.0pt) -- (36.952pt,-74.656pt);
\fill[color=white] (32.333pt,-72.0pt) circle (1.2pt);
\draw[rhrhombidraw] (32.333pt,-72.0pt) circle (1.2pt);
\fill[color=white] (41.571pt,-72.0pt) circle (1.2pt);
\draw[rhrhombidraw] (41.571pt,-72.0pt) circle (1.2pt);
\fill[color=white] (36.952pt,-80.0pt) circle (1.2pt);
\draw[rhrhombidraw] (36.952pt,-80.0pt) circle (1.2pt);
\fill (36.952pt,-74.656pt) circle (1.5pt);
\end{tikzpicture}}
\newcommand{\rhtikzrhombus}[1]{\smash{\raisebox{-2.5pt}{\scalebox{0.7}{\begin{tikzpicture}
  \draw[rhrhombidraw] (0.0pt,0.0pt) -- (-4.619pt,8.0pt) -- (0.0pt,16.0pt) -- (4.619pt,8.0pt) -- cycle;
  #1
\end{tikzpicture}}}}}
\newcommand{\rhc}{\rhtikzrhombus{}}
\newcommand{\rhsc}{\rhtikzrhombus{\draw[rhrhombithickside] (4.619pt,8.0pt) -- (-4.619pt,8.0pt);}}
\newcommand{\rhrhoul}{\rhtikzrhombus{\fill[rhrhombifill] (-4.618pt,8.0pt) -- (4.62pt,8.0pt) -- (0.0010pt,16.0pt) -- (-9.237pt,16.0pt) -- cycle;}}
\newcommand{\rhrhour}{\rhtikzrhombus{\fill[rhrhombifill] (0.0010pt,16.0pt) -- (-4.618pt,8.0pt) -- (4.62pt,8.0pt) -- (9.239pt,16.0pt) -- cycle;}}
\newcommand{\rhrholl}{\rhtikzrhombus{\fill[rhrhombifill] (-4.618pt,8.0pt) -- (-9.237pt,0.0pt) -- (0.0010pt,0.0pt) -- (4.62pt,8.0pt) -- cycle;}}
\newcommand{\rhrholr}{\rhtikzrhombus{\fill[rhrhombifill] (0.0010pt,0.0pt) -- (9.239pt,0.0pt) -- (4.62pt,8.0pt) -- (-4.618pt,8.0pt) -- cycle;}}
\newcommand{\rhrhul}{\rhtikzrhombus{\fill[rhrhombifill] (-9.237pt,16.0pt) -- (-13.856pt,8.0pt) -- (-4.618pt,8.0pt) -- (0.0010pt,16.0pt) -- cycle;}}
\newcommand{\rhrhur}{\rhtikzrhombus{\fill[rhrhombifill] (4.62pt,8.0pt) -- (13.858pt,8.0pt) -- (9.239pt,16.0pt) -- (0.0010pt,16.0pt) -- cycle;}}
\newcommand{\rhrhll}{\rhtikzrhombus{\fill[rhrhombifill] (-9.237pt,0.0pt) -- (0.0010pt,0.0pt) -- (-4.618pt,8.0pt) -- (-13.856pt,8.0pt) -- cycle;}}
\newcommand{\rhrhlr}{\rhtikzrhombus{\fill[rhrhombifill] (4.62pt,8.0pt) -- (0.0010pt,0.0pt) -- (9.239pt,0.0pt) -- (13.858pt,8.0pt) -- cycle;}}
\newcommand{\rhrhpl}{\rhtikzrhombus{\fill[rhrhombifill] (-4.619pt,8.0pt) -- (-9.238pt,16.0pt) -- (-13.857pt,8.0pt) -- (-9.238pt,0.0pt) -- cycle;}}
\newcommand{\rhrhpr}{\rhtikzrhombus{\fill[rhrhombifill] (13.857pt,8.0pt) -- (9.238pt,16.0pt) -- (4.619pt,8.0pt) -- (9.238pt,0.0pt) -- cycle;}}
\newcommand{\rhacM}{\rhtikzrhombus{\draw[rhrhombithickside] (4.619pt,8.0pt) -- (-4.619pt,8.0pt);
\draw[->,rhrhombiarrow] (0.0pt,4.0pt) -- (0.0pt,12.0pt);}}
\newcommand{\rhacW}{\rhtikzrhombus{\draw[rhrhombithickside] (4.619pt,8.0pt) -- (-4.619pt,8.0pt);
\draw[->,rhrhombiarrow] (0.0pt,12.0pt) -- (0.0pt,4.0pt);}}
\newcommand{\rhaoulW}{\rhtikzrhombus{\draw[rhrhombithickside] (-4.618pt,8.0pt) -- (0.0010pt,16.0pt);
\draw[->,rhrhombiarrow] (1.155pt,10.0pt) -- (-5.773pt,14.0pt);}}
\newcommand{\rhaourM}{\rhtikzrhombus{\draw[rhrhombithickside] (0.0010pt,16.0pt) -- (4.62pt,8.0pt);
\draw[->,rhrhombiarrow] (5.774pt,14.0pt) -- (-1.154pt,10.0pt);}}
\newcommand{\rhaourW}{\rhtikzrhombus{\draw[rhrhombithickside] (0.0010pt,16.0pt) -- (4.62pt,8.0pt);
\draw[->,rhrhombiarrow] (-1.154pt,10.0pt) -- (5.775pt,14.0pt);}}
\newcommand{\rhaollM}{\rhtikzrhombus{\draw[rhrhombithickside] (-4.618pt,8.0pt) -- (0.0010pt,0.0pt);
\draw[->,rhrhombiarrow] (1.155pt,6.0pt) -- (-5.773pt,2.0pt);}}
\newcommand{\rhaollW}{\rhtikzrhombus{\draw[rhrhombithickside] (-4.618pt,8.0pt) -- (0.0010pt,0.0pt);
\draw[->,rhrhombiarrow] (-5.773pt,2.0pt) -- (1.156pt,6.0pt);}}
\newcommand{\rhaolrM}{\rhtikzrhombus{\draw[rhrhombithickside] (0.0010pt,0.0pt) -- (4.62pt,8.0pt);
\draw[->,rhrhombiarrow] (-1.154pt,6.0pt) -- (5.775pt,2.0pt);}}
\newcommand{\rhaolrW}{\rhtikzrhombus{\draw[rhrhombithickside] (0.0010pt,0.0pt) -- (4.62pt,8.0pt);
\draw[->,rhrhombiarrow] (5.774pt,2.0pt) -- (-1.154pt,6.0pt);}}
\newcommand{\rhpcMl}{\rhtikzrhombus{\draw[rhrhombipatharrow,-my] (0.0pt,8.0pt) arc (0:60:4.619pt);}}
\newcommand{\rhpcMr}{\rhtikzrhombus{\draw[rhrhombipatharrow,-my] (0.0pt,8.0pt) arc (180:120:4.619pt);}}
\newcommand{\rhpcMrl}{\rhtikzrhombus{\draw[rhrhombipatharrow,-my] (0.0pt,8.0pt) arc (180:120:4.619pt) arc (300:360:4.619pt);}}
\newcommand{\rhpcMrr}{\rhtikzrhombus{\draw[rhrhombipatharrow,-my] (0.0pt,8.0pt) arc (180:120:4.619pt) arc (120:60:4.619pt);}}
\newcommand{\rhpcMrrl}{\rhtikzrhombus{\draw[rhrhombipatharrow,-my] (0.0pt,8.0pt) arc (180:120:4.619pt) arc (120:60:4.619pt) arc (240:300:4.619pt);}}
\newcommand{\rhpcMrrr}{\rhtikzrhombus{\draw[rhrhombipatharrow,-my] (0.0pt,8.0pt) arc (180:120:4.619pt) arc (120:60:4.619pt) arc (60:0:4.619pt);}}
\newcommand{\rhpcWr}{\rhtikzrhombus{\draw[rhrhombipatharrow,-my] (0.0pt,8.0pt) arc (0:-60:4.619pt);}}
\newcommand{\rhpcWrl}{\rhtikzrhombus{\draw[rhrhombipatharrow,-my] (0.0pt,8.0pt) arc (0:-60:4.619pt) arc (120:180:4.619pt);}}
\newcommand{\rhpoulMl}{\rhtikzrhombus{\draw[rhrhombipatharrow,-my] (-2.309pt,12.0pt) arc (240:300:4.619pt);}}
\newcommand{\rhpoulMr}{\rhtikzrhombus{\draw[rhrhombipatharrow,-my] (-2.309pt,12.0pt) arc (60:0:4.619pt);}}
\newcommand{\rhpoulMlr}{\rhtikzrhombus{\draw[rhrhombipatharrow,-my] (-2.309pt,12.0pt) arc (240:300:4.619pt) arc (120:60:4.619pt);}}
\newcommand{\rhpoulMrl}{\rhtikzrhombus{\draw[rhrhombipatharrow,-my] (-2.309pt,12.0pt) arc (60:0:4.619pt) arc (180:240:4.619pt);}}
\newcommand{\rhpoulMrr}{\rhtikzrhombus{\draw[rhrhombipatharrow,-my] (-2.309pt,12.0pt) arc (60:0:4.619pt) arc (0:-60:4.619pt);}}
\newcommand{\rhpourMl}{\rhtikzrhombus{\draw[rhrhombipatharrow,-my] (2.31pt,12.0pt) arc (120:180:4.619pt);}}
\newcommand{\rhpourMr}{\rhtikzrhombus{\draw[rhrhombipatharrow,-my] (2.31pt,12.0pt) arc (300:240:4.619pt);}}
\newcommand{\rhpourMll}{\rhtikzrhombus{\draw[rhrhombipatharrow,-my] (2.31pt,12.0pt) arc (120:180:4.619pt) arc (180:240:4.619pt);}}
\newcommand{\rhpourMlr}{\rhtikzrhombus{\draw[rhrhombipatharrow,-my] (2.31pt,12.0pt) arc (120:180:4.619pt) arc (0:-60:4.619pt);}}
\newcommand{\rhpourWl}{\rhtikzrhombus{\draw[rhrhombipatharrow,-my] (2.31pt,12.0pt) arc (300:360:4.619pt);}}
\newcommand{\rhpollMl}{\rhtikzrhombus{\draw[rhrhombipatharrow,-my] (-2.309pt,4.0pt) arc (120:180:4.619pt);}}
\newcommand{\rhpollMr}{\rhtikzrhombus{\draw[rhrhombipatharrow,-my] (-2.309pt,4.0pt) arc (300:240:4.619pt);}}
\newcommand{\rhpollWl}{\rhtikzrhombus{\draw[rhrhombipatharrow,-my] (-2.309pt,4.0pt) arc (300:360:4.619pt);}}
\newcommand{\rhpollWr}{\rhtikzrhombus{\draw[rhrhombipatharrow,-my] (-2.309pt,4.0pt) arc (120:60:4.619pt);}}
\newcommand{\rhpollWll}{\rhtikzrhombus{\draw[rhrhombipatharrow,-my] (-2.309pt,4.0pt) arc (300:360:4.619pt) arc (0:60:4.619pt);}}
\newcommand{\rhpollWlr}{\rhtikzrhombus{\draw[rhrhombipatharrow,-my] (-2.309pt,4.0pt) arc (300:360:4.619pt) arc (180:120:4.619pt);}}
\newcommand{\rhpolrWl}{\rhtikzrhombus{\draw[rhrhombipatharrow,-my] (2.31pt,4.0pt) arc (60:120:4.619pt);}}
\newcommand{\rhpolrWlr}{\rhtikzrhombus{\draw[rhrhombipatharrow,-my] (2.31pt,4.0pt) arc (60:120:4.619pt) arc (300:240:4.619pt);}}
\newcommand{\rhpolrWrl}{\rhtikzrhombus{\draw[rhrhombipatharrow,-my] (2.31pt,4.0pt) arc (240:180:4.619pt) arc (0:60:4.619pt);}}
\newcommand{\rhpolrWrr}{\rhtikzrhombus{\draw[rhrhombipatharrow,-my] (2.31pt,4.0pt) arc (240:180:4.619pt) arc (180:120:4.619pt);}}
\newcommand{\rhpulWr}{\rhtikzrhombus{\draw[rhrhombipatharrow,-my] (-6.928pt,12.0pt) arc (120:60:4.619pt);}}
\newcommand{\rhplrMrl}{\rhtikzrhombus{\draw[rhrhombipatharrow,-my] (6.929pt,4.0pt) arc (300:240:4.619pt) arc (60:120:4.619pt);}}
\newcommand{\rhppulWl}{\rhtikzrhombus{\draw[rhrhombipatharrow,-my] (-4.619pt,16.0pt) arc (180:240:4.619pt);}}
\newcommand{\rhpplrMlr}{\rhtikzrhombus{\draw[rhrhombipatharrow,-my] (4.619pt,0.0pt) arc (0:60:4.619pt) arc (240:180:4.619pt);}}
\newcommand{\rhpoulMlXolrWl}{\rhtikzrhombus{\draw[rhrhombipatharrow,-my] (-2.309pt,12.0pt) arc (240:300:4.619pt);\draw[rhrhombipatharrow,-my] (2.31pt,4.0pt) arc (60:120:4.619pt);}}
\newcommand{\mydebug}[1]{\textcolor{red}{\textbf{MYDEBUG}}}
\newcommand{\nopar}{}
\newcommand{\N}{\ensuremath{\mathbb{N}}}
\newcommand{\Z}{\ensuremath{\mathbb{Z}}}
\newcommand{\R}{\ensuremath{\mathbb{R}}}
\newcommand{\C}{\ensuremath{\mathbb{C}}}
\newcommand{\gO}{\ensuremath{\mathcal{O}}}
\newcommand{\LRC}[3]{\ensuremath{c_{#1,#2}^{#3}}}
\newcommand{\resp}[2]{\ensuremath{R^{#2}_{#1}}}
\newcommand{\res}{\ensuremath{R}}
\newcommand{\resf}[1]{\ensuremath{R_{#1}}}
\newcommand{\s}[2]{\sigma(#1,#2)}
\newcommand{\Algo}{\ensuremath{\mathscr{A}}}
\newcommand{\nffx}{\mbox{\text{nearly $f$\dash flat}}}
\newcommand{\nff}{\nffx\xspace}
\newcommand\dash{\nobreakdash-\hspace{0pt}}
\newcommand{\sP}{\bf \ensuremath{\#\P}}
\def\GL{\mathrm{GL}}
\def\CP{\sP}
\def\P{\mathrm{\bf P}}
\def\NP{\mathrm{\bf NP}}
\def\la{\lambda}
\newcommand{\oF}{\overline{F}}
\newcommand{\iot}{{\iota}}
\newcommand{\comb}{{\pi}}
\newcommand{\Gamm}[1]{{\tilde{\Gamma}(#1)}}
\newcommand{\pstart}[1]{\textup{\textsf{start}}({#1})}
\newcommand{\pend}[1]{\textup{\textsf{end}}({#1})}
\let\lparen=(\let\rparen=)
\newcommand{\print}[1]{\ensuremath{\texttt{print\lparen}#1\texttt{\rparen}}}
\newcommand{\algocaptionwithoutparam}[1]{\noindent\mbox{{\scshape #1}}}
\def\Return{\textbf{return}\xspace}
\def\True{\textbf{TRUE}\xspace}
\def\False{\textbf{FALSE}\xspace}
\newcommand{\p}{\ensuremath{\Psi}}
\newcommand{\poly}{\text{\normalfont{\textsf{poly}}}}
\newcommand{\inflo}{\mathsf{inflow}}
\newcommand{\outflo}{\mathsf{outflow}}
\newcommand{\blsquare}{\scalebox{0.7}{\ensuremath{\blacksquare}}} 
\newtheorem{theorem}{Theorem}[section]
\newtheorem{lemma}[theorem]{Lemma}
\newtheorem{keylemma}[theorem]{Key Lemma}
\newtheorem{proposition}[theorem]{Proposition}
\newtheorem{claim}[theorem]{Claim}
\newtheorem{problem}[theorem]{Problem}
\newtheorem{lemma_}{Lemma}
\theoremstyle{definition}
\newtheorem{INTERNALdefinition}[theorem]{Definition}
\newenvironment{definition}[1][default]{\ifthenelse{\equal{#1}{default}}{\begin{INTERNALdefinition}}{\begin{INTERNALdefinition}[#1]}\begin{samepage}}{~\hfill $\blsquare$\end{samepage}\end{INTERNALdefinition}}
\newtheorem{INTERNALremark}[theorem]{Remark}
\newenvironment{remark}{\begin{INTERNALremark}}{~\hfill $\blsquare$\end{INTERNALremark}}
\newtheorem{INTERNALdefinition_}[lemma_]{Definition}
\title{Small Littlewood-Richardson coefficients}
\author{Christian Ikenmeyer\thanks{University of Paderborn, Germany, Email address: ciken$@$math.upb.de}}
\let\oldsection=\section
\renewcommand{\section}{\setcounter{theorem}{0}\oldsection}
\begin{document}
\sloppy

\maketitle

\begin{abstract}
We develop structural insights into the Littlewood-Richardson graph, whose number of vertices equals the Littlewood-Richardson coefficient $\LRC \la \mu \nu$ for given partitions $\la$, $\mu$ and $\nu$.
This graph was first introduced in \cite{bi:12}, where its connectedness was proved.

Our insights are useful for the design of algorithms for computing the Littlewood-Richardson coefficient:
We design an algorithm for the exact computation of $\LRC \la \mu \nu$ with running time
$\gO\big((\LRC \la \mu \nu)^2 \cdot \poly(n)\big)$, where $\la$, $\mu$, and~$\nu$ are partitions of length at most $n$.
Moreover, we introduce an algorithm for deciding whether $\LRC \la \mu \nu \geq t$ whose running time is
$\gO\big(t^2 \cdot \poly(n)\big)$.
Even the existence of a polynomial-time algorithm for deciding whether $\LRC \la \mu \nu \geq 2$ is a nontrivial new result on its own.

Our insights also lead to the proof of a conjecture by King, Tollu, and Toumazet posed in \cite{ktt:04},
stating that $\LRC \la \mu \nu = 2$ implies $\LRC {M\la} {M\mu} {M\nu} = M+1$ for all $M \in \N$.
Here, the stretching of partitions is defined componentwise.
\end{abstract}

\noindent{\small 2010 MSC: 05E10, 22E46, 90C27}

\medskip

{\small
\renewcommand{\arraystretch}{0.9}
\setlength{\tabcolsep}{0cm}
\noindent\begin{tabular}{rcl}
Keywords: & \hphantom{i} & Littlewood-Richardson coefficient, hive model,\\
& \hphantom{i} &efficient algorithms, flows in networks
\end{tabular}
}

\tableofcontents

\section{Introduction}
Let $\la,\mu,\nu\in\Z^n$ be nonincreasing $n$-tuples of integers.
The \emph{Littlewood-Richardson coefficient} $\LRC{\lambda}{\mu}{\nu}$ of $\la$, $\mu$ and $\nu$
is defined as the multiplicity of the irreducible $\GL_n(\C)$-representation~$V_\nu$ 
with dominant weight $\nu$ in the tensor product $V_\la\otimes V_\mu$.
These coefficients appear not only in representation theory and 
algebraic combinatorics, but also in topology and enumerative geometry.

Different combinatorial characterizations of the Littlewood-Richardson coefficients are known. 
The classic Littlewood-Richardson rule (cf.~\cite{fult:97}) counts certain skew tableaux, 
while in~\cite{bz:92} the number of integer points of certain polytopes are counted.  
A beautiful characterization was given by Knutson and Tao~\cite{knta:99}, 
who characterized Littlewood-Richardson coefficients either as the number of honeycombs 
or hives with prescribed boundary conditions. 

The focus of this paper is on the complexity of computing the Littlewood-Richardson coefficient 
$\LRC{\lambda}{\mu}{\nu}$ on input $\la,\mu,\nu$. Without loss of generality we assume that 
the components of $\la,\mu,\nu$ are nonnegative integers and put 
$|\la| := \sum_i \la_i$. Moreover we write 
$\ell(\la)$ for the number of nonzero components of~$\la$.
Then $|\nu| = |\la| + |\mu|$ and $\nu_1\geq \max\{\la_1,\mu_1\}$ are necessary conditions for $\LRC{\lambda}{\mu}{\nu} > 0$. 
All known algorithms for computing Littlewood-Richardson coefficients 
take exponential time in the worst case. 
Narayanan~\cite{nara:06} proved that this is unavoidable:
the computation of $\LRC{\la}{\mu}{\nu}$ is a $\CP$-complete problem.
Hence there does not exist a polynomial time algorithm
for computing $\LRC{\la}{\mu}{\nu}$ under the widely believed hypothesis $\P\ne\NP$.

\smallskip

\noindent {\bf Main results.} 
This is a follow-up paper to \cite{bi:12}.
We use the characterization of $\LRC{\la}{\mu}{\nu}$ as the number of 
{\em hive flows with prescribed border throughput} on the honeycomb graph~$G$,
cf.\ Figures~\ref{fig:diagrams}--\ref{fig:degengraph}.
Besides capacity constraints given by $\la,\mu,\nu$, these flows have to satisfy 
rhombus inequalities corresponding to the ones considered in~\cite{knta:99, buc:00}.  

The integral hive flows form the integral points of the \emph{hive flow polytope} $P(\la,\mu,\nu)$.
The vertices of this polytope, together with edges given by cycles on the honeycomb graph,
form an undirected graph, whose connectedness was proved in \cite{bi:12}, see Section~\ref{subsec:seccycles}.
To compute $\LRC{\la}{\mu}{\nu}$ we design a variant of breadth-first-search
that lists all points in $P(\la,\mu,\nu)$ with only polynomial delay between the single outputs.
This enables us to decide $\LRC \la \mu \nu \geq t$ in time
$\gO\big(t^2 \cdot \poly(n)\big)$, see Theorem~\ref{thm:decide},
where $\poly(n)$ stands for a function that is polynomially bounded in $n$.
Even the polynomial time algorithm for deciding
$\LRC \la \mu \nu \geq 2$ is a new and nontrivial result.
Also we get an algorithm for computing $\LRC \la \mu \nu$
which runs in time $\gO\big((\LRC \la \mu \nu)^2 \cdot \poly(n)\big)$, see Theorem~\ref{thm:compute}.
All algorithms in this paper only use addition, multiplication, and comparison
and the running time is defined to be the number of these operations.

With only minor modifications our algorithms can be used to enumerate efficiently all hive flows
corresponding to a given tensor product $V_\la\otimes V_\mu$, as asked in~\cite[p.~186]{kt:01}.

Our algorithms are implemented and available.
We encourage the reader to try out our Java applet at\\
\indent{\tt http://www-math.upb.de/agpb/flowapplet/flowapplet.html}.

Moreover, our insights into the structure of the hive flow polytope lead to the proof
of the following conjecture of King, Tollu, and Toumazet posed in \cite{ktt:04}.
\begin{theorem}
\label{thm:ktt}
Given partitions $\la$, $\mu$ and $\nu$ such that $|\nu|=|\la|+|\mu|$.
Then $\LRC \la \mu \nu = 2$ implies $\LRC {M\la} {M\mu} {M\nu} = M+1$ for all $M \in \N$.
\end{theorem}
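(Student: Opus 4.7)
The plan is to use the hive flow polytope $P := P(\la,\mu,\nu)$ from the paper, together with the linearity property $P(M\la,M\mu,M\nu) = M\cdot P$, which reduces the problem to counting integer hive flows in the dilation $M\cdot P$.

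Since $\LRC \la \mu \nu = 2$, there are exactly two integer hive flows $F_1, F_2 \in P$, and by the connectedness of the Littlewood-Richardson graph from \cite{bi:12}, they are joined by an edge, so $c := F_2 - F_1$ is an integer flow supported on a cycle of the honeycomb graph. The vector $c$ is primitive: otherwise $c = d c'$ with $c' \in \Z^N$ and $d \geq 2$, in which case $F_1 + c' = (1 - 1/d) F_1 + (1/d) F_2$ would be a third integer point of $P$ by convexity, contradicting $\LRC \la \mu \nu = 2$. The $M+1$ lattice points $MF_1 + kc$ with $k = 0, 1, \dots, M$ then all lie on the segment $[MF_1, MF_2] \subseteq M\cdot P$, are pairwise distinct, and are integer hive flows. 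This already yields the lower bound $\LRC{M\la}{M\mu}{M\nu} \geq M+1$.

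The matching upper bound is the main obstacle. The cleanest formulation is that $P$ itself coincides with the segment $[F_1, F_2]$; then $M \cdot P = [MF_1, MF_2]$ is a segment with primitive direction $c$, hence contains exactly $M+1$ lattice points. To establish this, I would try to show that any tangent direction $v$ at $F_1$ with $F_1 + \epsilon v \in P$ must be parallel to $c$. Assuming for contradiction a transverse direction $v$, the goal is to rescale $v$ and combine it with $F_1$ and $F_2$ so as to exhibit a third integer hive flow in $P$ itself, using the rhombus inequalities to witness integrality at an appropriate point along the construction.

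Hive flow polytopes are not in general integral, so the delicate step is controlling non-integer vertices of $P$ transverse to $c$ that could, after stretching by $M$, give rise to additional lattice points off the segment $[MF_1, MF_2]$; one cannot simply appeal to the $M=1$ count. I expect this rigidity to come from the paper's refined cycle decomposition, particularly the \textsf{secu}-structure and the explicit description of how the rhombus inequalities restrict superpositions of honeycomb cycles, which together should rule out any cycle transverse to $c$ being feasible in $P$.
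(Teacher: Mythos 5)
Your lower bound and your reduction of the upper bound to the statement that $P(\la,\mu,\nu)$ equals the segment $[f_1,f_2]$ (Theorem~\ref{thm:ktt:lineseg} of the paper) are both correct, and primitivity of $c=f_2-f_1$ is automatic since $c$ is induced by a cycle and hence takes values in $\{0,\pm1\}$ on edges. But from that point on your proposal is a plan rather than a proof: the phrases ``I would try to show'', ``the goal is to rescale $v$'' and ``I expect this rigidity to come from the paper's refined cycle decomposition'' stand in for the entire mathematical content of the paper's Section~\ref{sec:ktt}. Concretely, what is missing is (i) Key Lemma~\ref{keylem:ktt:secure}, that when $\LRC\la\mu\nu=2$ every ordinary turncycle in $\resf{f_\iot}$ is automatically $f_\iot$\dash secure, proved by a chain-of-bad-rhombi and rerouting argument; and (ii) Proposition~\ref{pro:ktt:goal}, that every hive-preserving proper cycle at a convex combination $\xi$ of $f_1$ and $f_2$ is already $f_1$\dash{} or $f_2$\dash hive preserving, which requires the classification of the ``pipe'' of $c$ into thin and thick types, the flatspace-side and Rerouting machinery, and a further refinement of the Key Lemma to almost ordinary turncycles. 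None of these steps is routine, and nothing in your sketch indicates how you would obtain them.

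There is also a structural gap in your reduction itself. You propose to show that every tangent direction $v$ of $P$ at $f_1$ is parallel to $c$; but the tools you invoke (security of cycles, rhombus inequalities restricting superpositions of cycles) control only directions that are single cycles, whereas a general element of the tangent cone of the polytope is an arbitrary flow whose cycle decomposition need not consist of individually feasible cycles. The paper avoids this by applying the Connectedness Theorem~\ref{thm:connectedness} to the stretched polytope $MP=P(M\la,M\mu,M\nu)$: every integral point of $MP$ is reachable from $Mf_1$ by steps along single cycles, so it suffices to show that the cycle-neighborhood of each point $Mf_1+mc$ stays in the segment --- and this forces one to analyze hive-preserving cycles at all inner points $\xi$ of the segment, not only at the endpoints. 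Your sketch addresses neither this reduction nor the inner points, so even granting the rigidity you ``expect'', the argument as outlined does not close.
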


We remark that \cite[Conj.~3.3]{ktt:04} also contains the conjecture
that $\LRC \la \mu \nu = 3$ implies either $\LRC {M\la} {M\mu} {M\nu} = 2M+1$
or $\LRC {M\la} {M\mu} {M\nu} = (M+1)(M+2)/2$.
We think that a careful refinement of the methods in this paper
can be used to prove this and similar conjectures as well.

Several sections of this paper have a large overlap with \cite{bi:12}.
Most of the original content appears in Sections \ref{sec:enumeration}, \ref{sec:neighborhoodgen} and \ref{sec:ktt}.
The reader accustomed to the arguments in \cite{bi:12} will find it easier to understand this follow-up paper.
The Sections \ref{sec:flowdescr}-\ref{sec:neighborhoodgen} use results from~\cite{bi:12},
but when such results are used, then they are explicitly stated,
so that these sections can be studied without reading~\cite{bi:12}.
Section~\ref{sec:shoturncyc} however needs a profound understanding of the proof of Theorem~4.8 in~\cite{bi:12}.
Section~\ref{sec:ktt} is nearly self-contained, except for some references to Section~\ref{sec:shoturncyc}.

\paragraph{Acknowledgements}
I benefitted tremendously from the long, intense, and invaluable discussions with my PhD thesis advisor Peter B\"urgisser.
Furthermore, I thank the Deutsche Forschungsgemeinschaft for their financial support (DFG-grants BU 1371/3-1 and BU 1371/3-2).

\section{Flow description of LR coefficients}
\label{sec:flowdescr}

\subsection{Flows on digraphs}
\label{se:flows}

We fix some terminology regarding flows on directed graphs, compare~\cite{amo:93}.
Let $D$ be a digraph with vertex set $V(D)$ and edge set $E(D)$. 
Let $e_\textsf{start}$ denote the vertex where the edge~$e$ starts and 
$e_\textsf{end}$ the vertex where $e$ ends.
The {\em inflow} and {\em outflow} 
of a map $f\colon E(D)\rightarrow \R$ 
at a vertex $v\in V(D)$ are defined as 
$$
\inflo(v,f) := \sum\limits_{e_\textsf{end}=v} f(e),\quad 
\outflo(v,f) := \sum\limits_{e_\textsf{start}=v} f(e),
$$
respectively. 
A {\em flow on $D$} is defined as a map
$f\colon E(D)\rightarrow \R$ that satisfies 
Kirchhoff's conservation laws: 
$\inflo(v,f) = \outflo(v,f)$ for all 
$v \in V(D)$. 

The set of flows on $D$ is a vector space that we denote by $F(D)$.
A flow is called {\em integral} if it takes only integer values and 
we denote by $F(D)_\Z$ the group of integral flows on $D$.

A {\em path~$p$} in $D$ is defined as a sequence 
$x_0,\ldots,x_\ell$ of pairwise distinct vertices of~$D$ such that 
$(x_{i-1},x_i) \in E$ for all $1\le i\le\ell$.
A sequence $x_0,\ldots,x_\ell$ of vertices of $D$ is called a 
{\em cycle~$c$} if $x_0,\ldots,x_{\ell-1}$ are pairwise distinct,
$x_{\ell}=x_0$, and  $(x_{i-1},x_i) \in E$ 
for all $1\le i\le\ell$. 
It will be sometimes useful to identify a path or a cycle with the subgraph consisting of its vertices $x_0,x_1,\ldots,x_\ell$
and edges $(x_0,x_1),\ldots,(x_{\ell-1},x_\ell)$.
Since the starting vertex~$x_0$ of a cycle is not relevant, this does not harm. 

A cycle~$c$ in $D$ defines a flow~$f$ on $D$ by setting 
$f(e) := 1$ if $e\in c$ and $f(e):=0$ otherwise. 
It will be convenient to denote this flow with~$c$ as well. 
A flow is called {\em nonnegative} if $f(e)\ge 0$ for all edges $e\in E$.

We will study flows in two rather different situations. 
The residual digraph $R$ introduced in Section~\ref{sec:residual}
has the property that it never contains an edge $(u,v)$ 
and its reverse edge $(v,u)$. Only nonnegative flows on $R$ 
will be of interest. 

On the other hand, we also need to look at flows 
on digraphs resulting from an undirected graph $G$ 
by replacing each of its undirected edges $\{u,v\}$ by 
the directed edge $e:=(u,v)$ and its reverse  $-e := (v,u)$.
We shall denote the resulting digraph also by $G$. 
To a flow $f$ on~$G$ we assign its 
{\em reduced representative} $\tilde{f}$ defined by 
$\tilde{f}(e) := f(e) -f(-e) \ge 0$ and $\tilde{f}(-e) :=0$ 
if $f(e) \ge f(-e)$, 
and setting 
$\tilde{f}(e) := 0$ and $\tilde{f}(-e) := f(-e) -f(e)$ if $f(e) < f(-e)$. 
It will be convenient to interpret 
$f$ and $\tilde{f}$ as manifestations of the same flow:
Formally, we consider the linear subspace 
$N(G) := \{f \in \R^{E(G)} \mid \forall e \in E(G): f(e)=f(-e)\}$
of ``null flows'' and the factor space 
\begin{equation*}
 \oF(G) := F(G) / N(G).
\end{equation*}
We call the elements of $\oF(G)$ {\em flow classes on $G$} 
(or simply flows) and denote them by the same symbols as for flows. 
No confusion should arise from this abuse of notation in the context at hand. 
We usually identify flow classes with their reduced representative. 
A flow class is called {\em integral} if its reduced representative is integral 
and we denote by $\oF(G)_\Z$ the group of integral flow classes on $G$.

\subsection{Flows on the honeycomb graph $G$}
\label{se:honey}

We start with a triangular array of vertices, $n+1$ on each side, as seen in Figure~\ref{fig:triangulararray}.
The resulting planar graph $\Delta$ shall be called the {\em triangular graph} with parameter~$n$, 
we denote its vertex set with $V(\Delta)$ and its edge set with $E(\Delta)$. 
A triangle consisting of three edges in~$\Delta$ is called a \emph{hive triangle}.
Note that there are two types of hive triangles: upright (`$\bigtriangleup$') and downright oriented ones (`$\bigtriangledown$'). 
A \emph{rhombus} is defined to be the union of an upright and a downright hive triangle which share a common side.
In contrast to the usual geometric definition of the term \emph{rhombus}
we use this term here in this very restricted sense only.
Note that the angles at the corners of a rhombus are either acute of $60^\circ$ or obtuse of $120^\circ$.
Two distinct rhombi are called \emph{overlapping} if they share a hive triangle.
\begin{figure}[h] 
  \begin{center}
      \subfigure[The triangular graph $\Delta$ for $n=5$.]
        {\scalebox{1.8}{\FIGtriangulararray}
        \nopar\label{fig:triangulararray}} \hspace{0.5cm}
      \subfigure[The honeycomb graph $G$. The vertex of the outer face is omitted.]
        {\scalebox{0.9}{\FIGdualtriangulararray}
        \nopar\label{fig:dualtriangulararray}} \hspace{0.5cm}
      \subfigure[The fixed border throughputs.] 
        {
\scalebox{0.7}{
\begin{tikzpicture}[scale=5]\draw[rhrhombidraw] (0.0pt,0.0pt) -- (12.5pt,21.65pt) -- (25.0pt,0.0pt) -- cycle;\draw[rhrhombidraw] (2.5pt,4.33pt) -- (22.5pt,4.33pt) ;\draw[rhrhombidraw] (5.0pt,8.66pt) -- (20.0pt,8.66pt) ;\draw[rhrhombidraw] (7.5pt,12.99pt) -- (17.5pt,12.99pt) ;\draw[rhrhombidraw] (10.0pt,17.32pt) -- (15.0pt,17.32pt) ;\draw[rhrhombidraw] (15.0pt,17.32pt) -- (5.0pt,0.0pt) ;\draw[rhrhombidraw] (17.5pt,12.99pt) -- (10.0pt,0.0pt) ;\draw[rhrhombidraw] (20.0pt,8.66pt) -- (15.0pt,0.0pt) ;\draw[rhrhombidraw] (22.5pt,4.33pt) -- (20.0pt,0.0pt) ;\draw[rhrhombidraw] (20.0pt,0.0pt) -- (10.0pt,17.32pt) ;\draw[rhrhombidraw] (15.0pt,0.0pt) -- (7.5pt,12.99pt) ;\draw[rhrhombidraw] (10.0pt,0.0pt) -- (5.0pt,8.66pt) ;\draw[rhrhombidraw] (5.0pt,0.0pt) -- (2.5pt,4.33pt) ;\draw[rhrhombithickside] (12.5pt,21.65pt) -- (15.0pt,17.32pt);\draw[->,rhrhombiarrow] (15.915pt,20.735pt) -- (11.585pt,18.235pt);\draw[rhrhombithickside] (15.0pt,17.32pt) -- (17.5pt,12.99pt);\draw[->,rhrhombiarrow] (18.415pt,16.405pt) -- (14.085pt,13.905pt);\draw[rhrhombithickside] (17.5pt,12.99pt) -- (20.0pt,8.66pt);\draw[->,rhrhombiarrow] (20.915pt,12.075pt) -- (16.585pt,9.575pt);\draw[rhrhombithickside] (20.0pt,8.66pt) -- (22.5pt,4.33pt);\draw[->,rhrhombiarrow] (23.415pt,7.745pt) -- (19.085pt,5.245pt);\draw[rhrhombithickside] (22.5pt,4.33pt) -- (25.0pt,0.0pt);\draw[->,rhrhombiarrow] (25.915pt,3.415pt) -- (21.585pt,0.915pt);\draw[rhrhombithickside] (25.0pt,0.0pt) -- (20.0pt,0.0pt);\draw[->,rhrhombiarrow] (22.5pt,-2.5pt) -- (22.5pt,2.5pt);\draw[rhrhombithickside] (20.0pt,0.0pt) -- (15.0pt,0.0pt);\draw[->,rhrhombiarrow] (17.5pt,-2.5pt) -- (17.5pt,2.5pt);\draw[rhrhombithickside] (15.0pt,0.0pt) -- (10.0pt,0.0pt);\draw[->,rhrhombiarrow] (12.5pt,-2.5pt) -- (12.5pt,2.5pt);\draw[rhrhombithickside] (10.0pt,0.0pt) -- (5.0pt,0.0pt);\draw[->,rhrhombiarrow] (7.5pt,-2.5pt) -- (7.5pt,2.5pt);\draw[rhrhombithickside] (5.0pt,0.0pt) -- (0.0pt,0.0pt);\draw[->,rhrhombiarrow] (2.5pt,-2.5pt) -- (2.5pt,2.5pt);\draw[rhrhombithickside] (12.5pt,21.65pt) -- (10.0pt,17.32pt);\draw[->,rhrhombiarrow] (13.415pt,18.235pt) -- (9.085pt,20.735pt);\draw[rhrhombithickside] (10.0pt,17.32pt) -- (7.5pt,12.99pt);\draw[->,rhrhombiarrow] (10.915pt,13.905pt) -- (6.585pt,16.405pt);\draw[rhrhombithickside] (7.5pt,12.99pt) -- (5.0pt,8.66pt);\draw[->,rhrhombiarrow] (8.415pt,9.575pt) -- (4.085pt,12.075pt);\draw[rhrhombithickside] (5.0pt,8.66pt) -- (2.5pt,4.33pt);\draw[->,rhrhombiarrow] (5.915pt,5.245pt) -- (1.585pt,7.745pt);\draw[rhrhombithickside] (2.5pt,4.33pt) -- (0.0pt,0.0pt);\draw[->,rhrhombiarrow] (3.415pt,0.915pt) -- (-0.915pt,3.415pt);\node at (17.5pt,21.65pt) {{$\la_1$}};\node at (20.0pt,17.32pt) {{$\la_2$}};\node at (22.5pt,12.99pt) {{$\la_3$}};\node at (25.0pt,8.66pt) {{$\la_4$}};\node at (27.5pt,4.33pt) {{$\la_5$}};\node at (7.5pt,21.65pt) {{$\nu_1$}};\node at (5.0pt,17.32pt) {{$\nu_2$}};\node at (2.5pt,12.99pt) {{$\nu_3$}};\node at (0.0pt,8.66pt) {{$\nu_4$}};\node at (-2.5pt,4.33pt) {{$\nu_5$}};\node at (22.5pt,-4.33pt) {\rotatebox{45}{$\mu_1$}};\node at (17.5pt,-4.33pt) {\rotatebox{45}{$\mu_2$}};\node at (12.5pt,-4.33pt) {\rotatebox{45}{$\mu_3$}};\node at (7.5pt,-4.33pt) {\rotatebox{45}{$\mu_4$}};\node at (2.5pt,-4.33pt) {\rotatebox{45}{$\mu_5$}};\end{tikzpicture}
}
        \nopar\label{fig:triangulararraywithflow}}
    \caption{Graph constructions.} 
    \nopar\label{fig:diagrams}
  \end{center}
\end{figure}
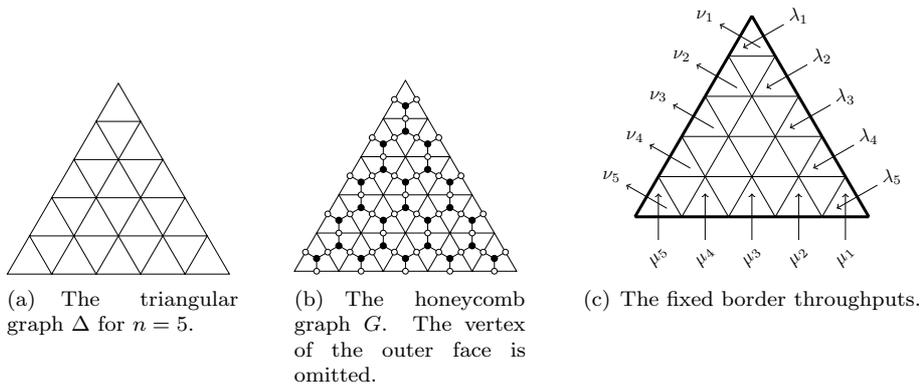

To realize the dual graph of $\Delta$, as in~\cite{buc:00}, 
we introduce a black vertex in the middle of each hive triangle
and a white vertex on each hive triangle side, see Figure~\ref{fig:dualtriangulararray}.
Moreover, in each hive triangle~$T$, we introduce edges connecting 
the three white vertices of $T$ with the black vertex. 
Moreover, we add a single black vertex to the outer face and connect it
with each white vertex at the border of $\Delta$.
Clearly, the resulting (undirected) graph~$G$ is bipartite and planar. 
We shall call $G$ the {\em honeycomb graph} with parameter~$n$. 
The cycles on $G$ that do not pass the outer vertex are called \emph{proper cycles}.

We study now the vector space $\oF(G)$ of flow classes on $G$ 
introduced in Section~\ref{se:flows}. Recall that for this, 
we have to replace each edge of $G$ by the corresponding two directed edges.  
Correspondingly, we will consider $G$ as a directed graph.
Since any cycle~$c$ in the digraph $G$ defines a flow,
it also defines a flow class on $G$ that we denote by~$c$ as well.

\begin{definition}[Throughput]
For an edge $k \in E(\Delta)$ let $e \in E(G)$ denote the edge
pointing from the white vertex on $k$ into the upright triangle.
The throughput $\delta(k,f)$ w.r.t.\ a flow $f\in\oF(G)$ is defined as
$f(e)-f(-e)$.
\end{definition}

In this paper, it will be extremely helpful to have some graphical way 
of describing rhombi and throughputs.
We shall denote a rhombus $\varrho$ by the pictogram $\rhc$, 
even though $\varrho$ may lie in any of the three positions 
``$\rhc$'', ``%
\raisebox{-0.1cm}{%
\begin{tikzpictured}\draw[rhrhombidraw] (-4.619pt,8.0001pt) -- (4.619pt,8.0001pt) -- (9.238pt,16.0002pt) -- (0.0pt,16.0002pt) -- cycle;\fill[white] (0.0pt,0.0pt) circle (0.1pt);\end{tikzpictured}%
}%
'' or ``%
\raisebox{-0.1cm}{%
\begin{tikzpictured}\fill[white] (0.0pt,0.0pt) circle (0.1pt);\draw[rhrhombidraw] (-9.238pt,16.0002pt) -- (-4.619pt,8.0001pt) -- (4.619pt,8.0001pt) -- (0.0pt,16.0002pt) -- cycle;\end{tikzpictured}%
}%
''
obtained by rotating with a multiple of $60^\circ$. 
Let $\rhsc$ denote the edge~$k$ of~$\Delta$ given by the \emph{diagonal} of $\varrho$ 
connecting its two obtuse angles.
Then we denote by $\rhacM (f) := \delta(k,f)$ the throughput of $f$ through~$k$ 
(going into the upright hive triangle). 
Similarly, we define the throughput 
$ \rhacW (f) := - \delta(k,f)$. 
The advantage of this notation is that 
if the throughput is positive, then the flow goes 
in the direction of the arrow.
For instance, using the symbolic notation, 
we note the following consequence of the flow conservation laws: 
\begin{equation*}
 \rhaollW(f) + \rhaolrW(f) \ =\  \rhaoulW(f) + \rhaourW(f) .
\end{equation*}

\subsection{Hive flows}


All definitions up to this point were fairly standard.
We describe now the relation to the Littlewood-Richardson coefficients.

\begin{definition}\label{def:slack}
The \emph{slack} of the rhombus $\rhc$ with respect to $f\in \oF(G)$ is defined as 
$$
\s \rhc f \ :=\ \rhaoulW(f) + \rhaolrM(f).
$$
The rhombus $\rhc$ is called \emph{$f$\dash flat} if $\s \rhc f$ = 0.
\end{definition}

It is clear that $\oF(G)\to\R, f\mapsto \s \varrho f$ is a linear form.
Note also that by flow conservation,
the slack can be written in various different ways: 
$$
 \s \rhc f \ =\ \rhaoulW(f) +  \rhaolrM(f) \ = \  
 \rhaoulW(f) -  \rhaolrW(f) \ =\  \rhaollW(f) - \rhaourW(f) 
 \ = \ \rhaollW(f) + \rhaourM(f) .
$$

For calculating the slack values of rhombi,
the \emph{hexagon equality} described below will be useful.
The straightforward proof is omitted.
\begin{claim}[Hexagon equality]
\label{cla:BZ}
The union of two overlapping rhombi $\varrho_1$ and $\varrho_2$ forms a trapezoid.
Glueing together two such trapezoids $(\varrho_1,\varrho_2)$ and $(\varrho'_1,\varrho'_2)$ at their longer side,
we get a hexagon. 
We have $\s {\varrho_1} f + \s {\varrho_2} f = \s {\varrho'_1} f + \s {\varrho'_2} f$
for each flow $f\in \oF(G)$.
In pictorial notation, the hexagon equality can be succinctly expressed as 
$$
\s {
\begin{tikzpictured}\fill[rhrhombifill] (9.238pt,0.0pt) -- (4.619pt,8.0001pt) -- (9.238pt,16.0002pt) -- (13.857pt,8.0001pt) -- cycle;\fill (0.0pt,0.0pt) circle (0.4pt);\fill (9.238pt,0.0pt) circle (0.4pt);\fill (4.619pt,8.0001pt) circle (0.4pt);\fill (-4.619pt,8.0001pt) circle (0.4pt);\fill (13.857pt,8.0001pt) circle (0.4pt);\fill (0.0pt,16.0002pt) circle (0.4pt);\fill (9.238pt,16.0002pt) circle (0.4pt);\draw[rhrhombidraw] (0.0pt,16.0002pt) -- (9.238pt,0.0pt) ;\end{tikzpictured}
} f + \s {
\begin{tikzpictured}\fill[rhrhombifill] (13.857pt,8.0001pt) -- (9.238pt,16.0002pt) -- (0.0pt,16.0002pt) -- (4.619pt,8.0001pt) -- cycle;\fill (0.0pt,0.0pt) circle (0.4pt);\fill (9.238pt,0.0pt) circle (0.4pt);\fill (4.619pt,8.0001pt) circle (0.4pt);\fill (-4.619pt,8.0001pt) circle (0.4pt);\fill (13.857pt,8.0001pt) circle (0.4pt);\fill (0.0pt,16.0002pt) circle (0.4pt);\fill (9.238pt,16.0002pt) circle (0.4pt);\draw[rhrhombidraw] (9.238pt,0.0pt) -- (0.0pt,16.0002pt) ;\end{tikzpictured}
} f = \s {
\begin{tikzpictured}\fill[rhrhombifill] (0.0pt,16.0002pt) -- (-4.619pt,8.0001pt) -- (0.0pt,0.0pt) -- (4.619pt,8.0001pt) -- cycle;\fill (0.0pt,0.0pt) circle (0.4pt);\fill (9.238pt,0.0pt) circle (0.4pt);\fill (4.619pt,8.0001pt) circle (0.4pt);\fill (-4.619pt,8.0001pt) circle (0.4pt);\fill (13.857pt,8.0001pt) circle (0.4pt);\fill (0.0pt,16.0002pt) circle (0.4pt);\fill (9.238pt,16.0002pt) circle (0.4pt);\draw[rhrhombidraw] (0.0pt,16.0002pt) -- (9.238pt,0.0pt) ;\end{tikzpictured}
} f + \s {
\begin{tikzpictured}\fill[rhrhombifill] (-4.619pt,8.0001pt) -- (0.0pt,0.0pt) -- (9.238pt,0.0pt) -- (4.619pt,8.0001pt) -- cycle;\fill (0.0pt,0.0pt) circle (0.4pt);\fill (9.238pt,0.0pt) circle (0.4pt);\fill (4.619pt,8.0001pt) circle (0.4pt);\fill (-4.619pt,8.0001pt) circle (0.4pt);\fill (13.857pt,8.0001pt) circle (0.4pt);\fill (0.0pt,16.0002pt) circle (0.4pt);\fill (9.238pt,16.0002pt) circle (0.4pt);\draw[rhrhombidraw] (0.0pt,16.0002pt) -- (9.238pt,0.0pt) ;\end{tikzpictured}
} f.
$$
\end{claim}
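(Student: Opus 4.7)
The plan is to reduce the hexagon equality to a bookkeeping exercise by exploiting the four equivalent expressions for the slack listed right after Definition~\ref{def:slack}. Since $f\mapsto \s{\varrho}{f}$ is linear on $\oF(G)$, the identity to be proved is a single linear relation among throughputs, and I would verify it by expressing both sides as a common signed sum of the throughputs through the six ``rim'' edges of the hexagon (the triangle sides on its boundary), while eliminating throughputs through the six ``spoke'' edges that meet the central vertex.

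First, I would label the six rim edges $k_1,\ldots,k_6$ (and note that the hexagon contains exactly one interior $\Delta$-vertex, its centre, incident to the six spokes). Each of the four rhombi $\varrho_1,\varrho_2,\varrho'_1,\varrho'_2$ has two obtuse corners: one of them lies on the boundary of the hexagon and the other is the central vertex. The four alternative expressions
$$\s{\rhc}{f}=\rhaoulW(f)+\rhaolrM(f)=\rhaoulW(f)-\rhaolrW(f)=\rhaollW(f)-\rhaourW(f)=\rhaollW(f)+\rhaourM(f)$$
allow one to rewrite each slack using throughputs through either of two pairs of opposite sides of the rhombus. I would choose, for each of the four rhombi, the expression whose two sides are both rim edges of the hexagon. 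This is possible because each rhombus has two pairs of opposite sides, and at least one such pair avoids the spokes.

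After these substitutions, both $\s{\varrho_1}{f}+\s{\varrho_2}{f}$ and $\s{\varrho'_1}{f}+\s{\varrho'_2}{f}$ become signed sums of throughputs through the six rim edges. A direct inspection of the pictograms shows that these sums agree edge by edge, which is the desired identity. The underlying reason is that the equivalence of the four slack expressions itself comes from flow conservation at the two interior vertices of a rhombus; in particular, switching between ``upper'' and ``lower'' expressions trades throughput through a spoke for throughputs through rim edges at the centre, so the spokes drop out of the combined sums on both sides.

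The main obstacle is the sign and orientation bookkeeping: the arrows in the pictograms \rhaoulW, \rhaolrM, etc.\ determine the signs with which each throughput appears, and one must track these carefully for all four rhombi in each of the three possible orientations of the hexagon. This is purely mechanical and is why the paper omits the proof. As a sanity check, a cleaner but slightly more abstract alternative would be to verify the identity directly on the generating flows of $\oF(G)$ supported on single edges inside the hexagon, reducing the claim to a small finite case check.
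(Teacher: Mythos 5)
Your overall strategy (linearity of the slack plus flow conservation) is the right one, and the paper indeed omits this proof as routine, but your proposal rests on a concrete false step. You claim that each of the four rhombi admits a slack expression whose pair of opposite sides consists of two rim edges, ``because each rhombus has two pairs of opposite sides, and at least one such pair avoids the spokes.'' This is wrong: as you observe yourself, the central vertex $O$ of the hexagon is an obtuse corner of each of the four rhombi, so the two spokes occurring among the four \emph{sides} of such a rhombus are precisely the two sides meeting at $O$ --- they are \emph{adjacent}, not opposite. Hence every pair of opposite sides consists of exactly one spoke and one rim edge, and none of the four listed slack expressions avoids the spokes. Worse, the intermediate identity your bookkeeping aims for --- both sides of the hexagon equality written as one and the same signed sum of rim-edge throughputs --- cannot exist: the flow induced by the proper cycle in $G$ circling the central vertex (alternating between the white vertices on the six spokes and the black vertices of the six triangles of the hexagon) has throughput $0$ through every rim edge, yet its slack on each of the four rhombi equals $+1$ (in each rhombus it enters through one spoke side, crosses the diagonal, and leaves through the other spoke side, a positive contribution in the sense of Definition~\ref{def:slack-contr}). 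For this flow $\s {\varrho_1} f + \s {\varrho_2} f = 2$ while all rim throughputs vanish, so no signed sum over rim edges can represent it.

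The claim itself is true and the repair is small, but it goes in the opposite direction from your plan: everything reduces to the spokes on the gluing segment, not to the rim. Writing $\phi_T(k)$ for the throughput through a side $k$ into an adjacent hive triangle $T$, the two relevant forms of the slack of a rhombus $T_u\cup T_d$ are $-\phi_{T_u}(\textup{ul})-\phi_{T_d}(\textup{lr})$ and $\phi_{T_d}(\textup{ll})+\phi_{T_u}(\textup{ur})$. Taking the second form for $\varrho_1$ and the first for $\varrho_2$, the two occurrences of the rim edge of the triangle shared by $\varrho_1$ and $\varrho_2$ cancel (it is the ``ur'' side of one rhombus and the ``lr'' side of the other, entering with opposite signs and referring to the same triangle), and what survives is exactly the signed throughput through the two spokes that form the segment along which the two trapezoids are glued. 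The computation for $(\varrho'_1,\varrho'_2)$ yields the same two terms because $\phi_T(k)=-\phi_{T'}(k)$ for the two triangles adjacent to $k$. Your fallback of verifying the identity on a generating set is also viable, but note that ``flows supported on single edges'' are not flows; you would need a basis of cycles of $\oF(G)$.
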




\begin{definition}\label{def:hiveflow}
A flow $f \in \oF(G)$ is called a \emph{hive flow} iff for all rhombi~$\varrho$ we have $\s \varrho f \geq 0$.
\end{definition}

Note that the set of hive flows is a cone in $\oF(G)$. 
Figure~\ref{fig:degengraph} provides an example of a hive flow.
\begin{figure}[h]
\begin{center}
\scalebox{1.2}{
\begin{tikzpicture}\draw[rhrhombithickside] (60.0pt,51.96pt) -- (30.0pt,0.0pt);\draw[rhrhombithickside] (75.0pt,25.98pt) -- (15.0pt,25.98pt);\draw[rhrhombithickside] (60.0pt,0.0pt) -- (30.0pt,51.96pt);\draw[rhrhombithickside] (75.0pt,25.98pt) -- (60.0pt,0.0pt);\draw[rhrhombidraw] (0.0pt,0.0pt) -- (45.0pt,77.94pt) -- (90.0pt,0.0pt) -- cycle;\draw[rhrhombidraw] (30.0pt,51.96pt) -- (60.0pt,51.96pt) ;\draw[rhrhombidraw] (15.0pt,25.98pt) -- (75.0pt,25.98pt) ;\draw[rhrhombidraw] (30.0pt,0.0pt) -- (60.0pt,51.96pt) ;\draw[rhrhombidraw] (60.0pt,0.0pt) -- (75.0pt,25.98pt) ;\draw[rhrhombidraw] (60.0pt,0.0pt) -- (30.0pt,51.96pt) ;\draw[rhrhombidraw] (30.0pt,0.0pt) -- (15.0pt,25.98pt) ;\draw[butt cap-latex,line width=1pt,black] (75.0pt,-6.0pt) -- (75.0pt,9.0pt);\fill[white] (75.0pt,0.0pt) circle (2.5pt);\node at (75.0pt,0.0pt) {\tiny 5};\draw[butt cap-latex,line width=1pt,black] (45.0pt,-6.0pt) -- (45.0pt,9.0pt);\fill[white] (45.0pt,0.0pt) circle (2.5pt);\node at (45.0pt,0.0pt) {\tiny 2};\draw[butt cap-latex,line width=1pt,black] (72.696pt,9.99pt) -- (59.706pt,17.49pt);\fill[white] (67.5pt,12.99pt) circle (2.5pt);\node at (67.5pt,12.99pt) {\tiny 5};\draw[butt cap-latex,line width=1pt,black] (60.0pt,19.98pt) -- (60.0pt,34.98pt);\fill[white] (60.0pt,25.98pt) circle (2.5pt);\node at (60.0pt,25.98pt) {\tiny 4};\draw[butt cap-latex,line width=1pt,black] (45.0pt,45.96pt) -- (45.0pt,60.96pt);\fill[white] (45.0pt,51.96pt) circle (2.5pt);\node at (45.0pt,51.96pt) {\tiny 2};\draw[butt cap-latex,line width=1pt,black] (57.696pt,35.97pt) -- (44.706pt,43.47pt);\fill[white] (52.5pt,38.97pt) circle (2.5pt);\node at (52.5pt,38.97pt) {\tiny 6};\draw[butt cap-latex,line width=1pt,black] (42.696pt,9.99pt) -- (29.706pt,17.49pt);\fill[white] (37.5pt,12.99pt) circle (2.5pt);\node at (37.5pt,12.99pt) {\tiny 3};\draw[butt cap-latex,line width=1pt,black] (42.696pt,61.95pt) -- (29.706pt,69.45pt);\fill[white] (37.5pt,64.95pt) circle (2.5pt);\node at (37.5pt,64.95pt) {\tiny 6};\draw[butt cap-latex,line width=1pt,black] (27.696pt,35.97pt) -- (14.706pt,43.47pt);\fill[white] (22.5pt,38.97pt) circle (2.5pt);\node at (22.5pt,38.97pt) {\tiny 4};\draw[butt cap-latex,line width=1pt,black] (12.696pt,9.99pt) -- (-0.294pt,17.49pt);\fill[white] (7.5pt,12.99pt) circle (2.5pt);\node at (7.5pt,12.99pt) {\tiny 3};\draw[butt cap-latex,line width=1pt,black] (57.696pt,67.95pt) -- (44.706pt,60.45pt);\fill[white] (52.5pt,64.95pt) circle (2.5pt);\node at (52.5pt,64.95pt) {\tiny 4};\draw[butt cap-latex,line width=1pt,black] (72.696pt,41.97pt) -- (59.706pt,34.47pt);\fill[white] (67.5pt,38.97pt) circle (2.5pt);\node at (67.5pt,38.97pt) {\tiny 2};\draw[butt cap-latex,line width=1pt,black] (42.696pt,41.97pt) -- (29.706pt,34.47pt);\fill[white] (37.5pt,38.97pt) circle (2.5pt);\node at (37.5pt,38.97pt) {\tiny 4};\draw[butt cap-latex,line width=1pt,black] (57.696pt,15.99pt) -- (44.706pt,8.49pt);\fill[white] (52.5pt,12.99pt) circle (2.5pt);\node at (52.5pt,12.99pt) {\tiny 1};\draw[butt cap-latex,line width=1pt,black] (27.696pt,15.99pt) -- (14.706pt,8.49pt);\fill[white] (22.5pt,12.99pt) circle (2.5pt);\node at (22.5pt,12.99pt) {\tiny 3};\end{tikzpicture}
}
    \caption{A hive flow for $n=3$, 
                   $\la =  (4,2,0)$, 
                   $\mu=(5,2,0)$, 
                  $\nu=(6,4,3)$.
                   The numbers give the throughputs through edges of $\Delta$ in the directions of the arrows. 
                   The diagonals of rhombi with positive slack are drawn thicker.
                   In this example, the slack of the rhombi ranges from 0 to 2.
                   }
    \nopar\label{fig:degengraph}
\end{center}
\end{figure}
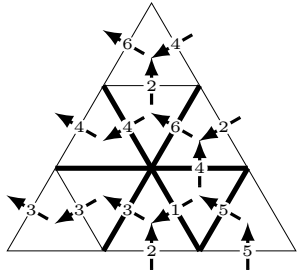


We now fix the throughputs at the border of~$\Delta$,
depending on a chosen triple $\la,\mu,\nu\in\N^n$ of partitions
satisfying $|\nu|=|\la|+|\mu|$:
To the $i$th border edge~$k$ of~$\Delta$ on the right border of~$\Delta$, counted from top to bottom, 
we assign the \emph{fixed throughput} $\bar\delta(k):=\la_i$, see Figure~\ref{fig:triangulararraywithflow}.
Further, we set $\bar\delta(k):= \mu_i$ for the $i$th edge~$k$ on the bottom border of~$\Delta$, counted from right to left.
Finally, we set $\bar\delta(k'):= -\nu_i$ for the $i$th edge~$k'$ on the left border of~$\Delta$, counted from top to bottom.
Recall that $\delta(k,f)$ denotes the throughput of a flow $f$ into $\Delta$, 
while $-\delta(k',f)$ denotes the throughput of~$f$ out of $\Delta$.
\begin{definition}\label{def:hiveflowpolytope}
Let $\la,\mu,\nu\in\N^n$ be a triple of partitions satisfying $|\nu|=|\la|+|\mu|$.
The \emph{hive flow polytope} $P:=P(\la,\mu,\nu) \subseteq \oF(G)$ 
is defined to be the set of hive flows $f\in\oF(G)$ satisfying 
$
 \delta(k,f) = \bar \delta(k)
$
for all border edges~$k$ of $\Delta$.
We also set $P_\Z := P \cap \oF(G)_\Z$.
\end{definition}

The following description of the Littlewood-Richardson coefficient is heavily based on \cite{knta:99}, see~\cite{bi:12}.
\begin{proposition}[{\protect\cite[Prop.~2.7]{bi:12}}]
\label{pro:flowdescription}
The Littlewood-Richardson coefficient $\LRC \la \mu \nu$ equals the number of integral hive flows in $P(\la,\mu,\nu)$,
i.e., $\LRC \la \mu \nu = |P(\la,\mu,\nu)_\Z|$.
\end{proposition}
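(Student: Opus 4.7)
\medskip

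\noindent\textbf{Proof plan.} I would prove this by exhibiting an explicit bijection between integral hive flows in $P(\la,\mu,\nu)$ and integral Knutson--Tao hives with boundary $(\la,\mu,\nu)$, since the latter are known (by \cite{knta:99}) to be counted by $\LRC \la \mu \nu$. Recall that a (real) hive is a function $h\colon V(\Delta)\to\R$ whose restriction to the border is the standard one determined by $\la,\mu,\nu$ (e.g.\ differences of consecutive values along each side are $\la_i$, $\mu_i$, $\nu_i$ respectively, starting from a fixed corner value), and which is rhombus-concave: for every rhombus with obtuse vertices $o_1,o_2$ and acute vertices $a_1,a_2$, one has $h(o_1)+h(o_2)\ge h(a_1)+h(a_2)$.

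\medskip

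The bijection $\Phi$ sends a hive $h$ to the flow on $G$ whose throughput through each interior edge $k$ of $\Delta$ (with endpoints $u,v\in V(\Delta)$, oriented by a fixed global convention so that ``into the upright triangle'' is consistent with Section~\ref{se:honey}) is $\delta(k)=h(u)-h(v)$. First I would verify that this prescription does extend to a well-defined flow class on $G$. Conservation at a white vertex sitting on an edge $k$ is automatic since only one throughput value is assigned to $k$. Conservation at the black vertex in the centre of a hive triangle $T$ with vertices $v_1,v_2,v_3$ amounts to the telescoping identity $(h(v_1)-h(v_2))+(h(v_2)-h(v_3))+(h(v_3)-h(v_1))=0$, once the signs from the ``into $\bigtriangleup$/into $\bigtriangledown$'' convention are tracked; conservation at the outer black vertex follows from the fact that the hive boundary values sum telescopingly around $\partial\Delta$ (this is where $|\nu|=|\la|+|\mu|$ enters).

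\medskip

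Next I would check the three things that make $\Phi$ a bijection of the two combinatorial sets. (i) The prescribed border throughputs $\bar\delta$ from Figure~\ref{fig:triangulararraywithflow} are exactly the successive differences $h(u)-h(v)$ along the border, which are $\la_i$, $\mu_i$, and $-\nu_i$ by the boundary normalisation of hives; so $\Phi(h)\in P$. (ii) Computing $\s \rhc f = \rhaoulW(f)+\rhaolrM(f)$ for $f=\Phi(h)$ and writing each arrow as a difference of hive values, the four contributions telescope to exactly $h(o_1)+h(o_2)-h(a_1)-h(a_2)$, so $\s \varrho f\ge 0$ for all rhombi if and only if $h$ is rhombus-concave. (iii) Clearly $h\in\Z^{V(\Delta)}$ gives $\Phi(h)\in\oF(G)_\Z$.

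\medskip

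Finally I would construct the inverse: given $f\in P_\Z$, fix a basepoint $v_0\in V(\Delta)$ (say the top corner), set $h(v_0):=0$, and define $h(v)$ as the signed sum of throughputs of $f$ along any path in $\Delta$ from $v_0$ to $v$. Path-independence is exactly flow conservation at the black vertices in each hive triangle, which I already verified corresponds to the telescoping around $T$. Integrality of $f$ forces $h\in\Z^{V(\Delta)}$; the boundary throughput condition forces the correct hive border; and the computation in (ii) shows rhombus-concavity. Thus $\Phi$ is bijective on integer points, and the proposition follows from the Knutson--Tao theorem. The main delicacy of the argument is bookkeeping of the orientation conventions so that signs in the slack computation (ii) come out consistently for all three rotational positions of a rhombus; once a uniform convention is fixed this is routine.
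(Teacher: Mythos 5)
Your proposal is correct and is essentially the argument behind the cited result: the present paper does not reprove Proposition~\ref{pro:flowdescription} but imports it from \cite{bi:12}, where it is established by exactly this potential/gradient duality between Knutson--Tao hives on $V(\Delta)$ and flow classes on the dual graph $G$ (conservation at black vertices $=$ telescoping around a hive triangle, border throughputs $=$ boundary differences, slack $=$ rhombus concavity), followed by an appeal to the Knutson--Tao hive count. Your sketch matches that route, including the only genuinely delicate point, the sign and orientation bookkeeping in the slack computation.
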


\subsection{The LR graph and secure cycles}
\label{subsec:seccycles}
We now focus on cycles and their induced flow,
starting out with the following fairly simple claim.
\begin{claim}
\label{cla:smallabsoluteslack}
 For each cycle $c$ and each rhombus $\varrho$ we have $\s \varrho c \in \{-2,-1,0,1,2\}$.
\end{claim}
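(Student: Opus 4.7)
My plan is straightforward: bound the throughput of the cycle-flow $c$ through every single edge of $\Delta$ by~$1$ in absolute value, and then invoke the defining formula for slack, which expresses $\s\varrho c$ as a sum of only two such throughputs.

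For the bound on throughputs, I would start from the fact that by definition a cycle $c=x_0,x_1,\ldots,x_\ell$ consists of pairwise distinct vertices $x_0,\ldots,x_{\ell-1}$ with $x_\ell=x_0$. Fix an edge $k$ of $\Delta$ and let $e\in E(G)$ denote the honeycomb edge running from the white vertex on~$k$ into the adjacent upright hive triangle, so that by definition
$$
\delta(k,c) \;=\; c(e)-c(-e).
$$
The key observation is that $c$ cannot traverse both $e$ and $-e$: doing so would require each of the two endpoints of $e$ to be visited twice by the cycle, contradicting vertex distinctness. Hence at most one of $c(e),c(-e)$ equals $1$ while the other equals $0$, which yields $\delta(k,c)\in\{-1,0,1\}$.

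Having established this, I would finish by appealing to Definition~\ref{def:slack}, namely
$$
\s \varrho c \;=\; \rhaoulW(c) + \rhaolrM(c).
$$
Each of the two summands on the right is, up to sign, a throughput $\delta(k_i,c)$ through an edge $k_i$ of $\Delta$ (a side of $\varrho$), and therefore lies in $\{-1,0,1\}$ by the previous step. Consequently $\s\varrho c\in\{-2,-1,0,1,2\}$, as claimed. There is no real obstacle here: the claim is an immediate consequence of the distinctness of vertices in a cycle combined with the pictogram formula for the slack.
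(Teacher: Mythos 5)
Your proof is correct and follows the same route as the paper, which simply observes that $\s\varrho c$ is the sum of the two throughputs $\rhaoulW(c)$ and $\rhaolrM(c)$, each lying in $\{-1,0,1\}$. (One minor remark: your claim that a cycle can never traverse both $e$ and $-e$ fails for the degenerate two-vertex cycle $u,v,u$ allowed by the paper's definition, but this does not matter, since $c(e),c(-e)\in\{0,1\}$ already forces $c(e)-c(-e)\in\{-1,0,1\}$.)
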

\begin{proof}
Fix a rhombus $\rhc$. The claim follows from $\rhaoulW(c) \in \{-1,0,1\}$ and $\rhaolrM(c) \in \{-1,0,1\}$.
\end{proof}
For the exact slack calculation we introduce the following notation,
in which we write paths in a pictorial notation.
As always, $\rhc$ can appear in any situation rotated by a multiple of $60^\circ$.
\begin{definition}\label{def:slack-contr}
The sets of paths, interpreted as subsets of $E(G)$,  
$$
 \p_+(\rhc) := \{\rhpourMr, \rhpollWr, \rhpourMll, \rhpollWll\},\quad
 \p_-(\rhc) := \{\rhpoulMl, \rhpolrWl, \rhpoulMrr, \rhpolrWrr\},\mbox{ and } 
 \p_0(\rhc) := \{\rhpoulMrl,\rhpourMlr,\rhpollWlr,\rhpolrWrl\}
$$
are called the sets of  of 
\emph{positive},
\emph{negative}, and 
\emph{neutral slack contributions} of 
the rhombus $\rhc$, respectively. 
\end{definition}
The following Proposition~\ref{pro:slackcalc} gives a method to determine the slack.
\begin{proposition}[{\protect\cite[Obs.~3.3]{bi:12}}]
\label{pro:slackcalc}
Let $c$ be a cycle in $G$, let $\varrho$ be a rhombus, and let $E_\varrho$ denote the set of edges of $G$ contained in a rhombus~$\varrho$. 
Then $c\cap E_\varrho$ is either empty, or it is a union of one or two slack contributions~$q$.
The slack $\s \varrho c$ is obtained by adding $1$, $0$, or $-1$ 
over the contributions~$q$ contained in $c$, according to whether~$q$ is   
is  positive, negative, or neutral.
\end{proposition}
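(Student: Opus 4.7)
The plan is to combine a local structural analysis of how a cycle can traverse a rhombus with a direct evaluation of the slack formula $\s \varrho c = \rhaoulW(c) + \rhaolrM(c)$ on each possible traversal pattern. First I would pin down what edges of $G$ lie in $E_\varrho$: the rhombus $\varrho$ contains two black vertices (one per hive triangle) and five white vertices (on its four outer sides and one on its diagonal). Every edge of $G$ incident to a black vertex of $\varrho$ is in $E_\varrho$ (there are six such edges), and each white vertex on the diagonal has both of its $G$-neighbours inside $\varrho$, while each white vertex on an outer side has exactly one neighbour inside $\varrho$.

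Next I would use the fact that a cycle uses either 0 or 2 edges at each visited vertex. At a visited black vertex of $\varrho$, both used edges must lie in $E_\varrho$; at the diagonal white vertex, if visited, again both used edges lie in $E_\varrho$; at an outer white vertex, either $0$ or $1$ edge of $E_\varrho$ is used, and in the latter case the outer white vertex is an ``exit point'' of the cycle from $\varrho$. Classifying the connected components of $c \cap E_\varrho$ by how many of the two black vertices they visit (zero, one, or two) and which sides of the rhombus they enter and leave through, I would show that each component is, up to the ambient rotation/reflection symmetry of the pictograms, exactly one of the paths listed in Definition~\ref{def:slack-contr}. Because $\varrho$ only contains two black vertices and the components of $c \cap E_\varrho$ are vertex-disjoint, at most two components can occur, proving the structural half of the statement.

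For the arithmetic half, I would evaluate $\s \varrho c = \rhaoulW(c) + \rhaolrM(c)$ on each of the four paths in $\p_+(\varrho)$, $\p_-(\varrho)$, and $\p_0(\varrho)$ separately. Each of these paths uses a specific small set of directed edges of $G$ along the two ``ool'' and ``olr'' sides of $\varrho$ (or doesn't touch them), and the indicator $c(e) - c(-e)$ of those edges is readable directly from the pictogram. A quick inspection, case by case, shows that a positive contribution produces $+1$, a negative contribution $-1$, and a neutral contribution $0$. Since the slack functional is linear on $\oF(G)$ (as noted after Definition~\ref{def:slack}) and since distinct components of $c \cap E_\varrho$ use disjoint edges of $G$, summing over contributions yields the total slack and completes the proof.

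The main obstacle I anticipate is the enumeration step: making sure that the $4+4+4$ paths listed in Definition~\ref{def:slack-contr} really do exhaust, up to the symmetries already built into the pictographic notation, all possible ways a simple cycle can traverse $\varrho$. I would handle this by splitting on the number $k \in \{0,1,2\}$ of black vertices of $\varrho$ touched by the cycle and, within each case, enumerating the pairs of outer white ``entry/exit'' vertices consistent with the degree-$2$ constraint at black vertices and at the diagonal white vertex; appeal to the cycle being vertex-simple rules out the degenerate configurations (such as using both edges at an outer white vertex, which would require a vertex outside $\varrho$ that $c$ never revisits).
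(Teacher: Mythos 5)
The paper does not actually prove this statement --- it imports it verbatim from \cite{bi:12} as Observation~3.3 --- so there is no in-paper argument to compare yours against. Your proof is correct and is the expected direct verification: every edge of $E_\varrho$ meets one of the two black vertices of $\varrho$, so a vertex-simple cycle meets $E_\varrho$ in at most two components; a component through both black vertices must pass the diagonal white vertex (giving the eight crossing paths), a component through one black vertex is a single turn between that triangle's two outer white vertices (giving the four remaining contributions), and evaluating $\s \varrho c = \rhaoulW(c)+\rhaolrM(c)$ contribution by contribution gives exactly the signs of Definition~\ref{def:slack-contr}. The only caveat is cosmetic: the ``degenerate configuration'' you propose to exclude at an outer white vertex is in fact the ordinary entry/exit behaviour (one of its two edges lies in $E_\varrho$, the other does not), so nothing needs to be ruled out there; this does not affect the validity of the argument.
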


\begin{definition}
\label{def:fhivepres}
Fix a hive flow $f \in P$.
A flow $d \in \oF(G)$ is called \emph{$f$\dash hive preserving},
if there exists $\varepsilon>0$ such that $f+\varepsilon d \in P$.
We call a cycle $f$\dash hive preserving if its induced flow is $f$\dash hive preserving.
\end{definition}
We can easily check whether a cycle is $f$\dash hive preserving using the following lemma.
\begin{lemma}
\label{lem:fhivepresiffnoneg}
 A cycle $c$ is $f$\dash hive preserving, iff $c$ does not use any negative contribution in $f$\dash flat rhombi.
\end{lemma}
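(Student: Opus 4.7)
My plan is a two-step reduction: first exploit the linearity of the defining constraints of $P$ to rephrase $f$-hive preservation as a condition on the slack of $f$-flat rhombi, then invoke Proposition~\ref{pro:slackcalc} to translate this into a statement about slack contributions. For the first step, both the slack and the border throughput are linear in the flow, so
$\s \varrho (f+\varepsilon c) = \s \varrho f + \varepsilon\, \s \varrho c$ and $\delta(k, f+\varepsilon c) = \bar\delta(k) + \varepsilon\, \delta(k, c)$ for every rhombus $\varrho$ and every border edge $k$. Each white border vertex of $\Delta$ has exactly two $G$-neighbours (the adjacent interior black and the outer black), so a cycle not using the outer vertex cannot visit any border white; consequently $\delta(k,c)=0$ on all border edges and the border equalities are preserved for every $\varepsilon$. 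For a rhombus with $\s \varrho f > 0$ any sufficiently small $\varepsilon > 0$ keeps $\s \varrho(f+\varepsilon c)\geq 0$ regardless of the sign of $\s \varrho c$, whereas for an $f$-flat rhombus the condition $\s \varrho c\geq 0$ is both necessary and sufficient. Therefore $c$ is $f$-hive preserving iff $\s \varrho c \geq 0$ for every $f$-flat rhombus $\varrho$.

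For the second step, Proposition~\ref{pro:slackcalc} expresses $\s \varrho c$ as the signed sum, with signs $+1$, $-1$, $0$, over the one or two slack contributions that decompose $c \cap E_\varrho$. The "if" direction is then immediate: if $c$ uses no negative contribution in the $f$-flat rhombus $\varrho$, each summand in that sum is non-negative and so $\s \varrho c \geq 0$.

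For the "only if" direction I would argue by contrapositive: suppose $c$ contains a negative contribution $q \in \p_-(\varrho)$ in some $f$-flat rhombus $\varrho$. If $c \cap E_\varrho = q$, then $\s \varrho c = -1 < 0$ and we conclude. Otherwise, $c \cap E_\varrho$ consists of $q$ together with a second contribution $q'$, and I must rule out $q' \in \p_+(\varrho)$, since a $(-,+)$ pair would yield $\s \varrho c = 0$ and spuriously preserve hive-ness. I expect this last step to be the main obstacle: a careful finite inspection of Definition~\ref{def:slack-contr}, repeated for each rotational position of $\varrho$, to verify that every positive contribution and every negative contribution of the same rhombus share at least one $G$-edge. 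Because $c$ is a simple cycle and uses each edge of $G$ at most once, the presence of $q$ in $c$ then forbids any $q' \in \p_+(\varrho)$, leaving $q' \in \p_-(\varrho) \cup \p_0(\varrho)$ and hence $\s \varrho c \leq -1 < 0$, as required.
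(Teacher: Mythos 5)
Your first two steps are fine and match the paper: linearity reduces $f$\dash hive preservation to ``$\s \varrho c \geq 0$ for every $f$\dash flat rhombus $\varrho$'', and Proposition~\ref{pro:slackcalc} gives the ``if'' direction immediately. You have also correctly located the crux of the ``only if'' direction: one must exclude the possibility that a negative contribution in an $f$\dash flat rhombus is cancelled by a positive one. But the resolution you propose for that crux rests on a false premise. It is \emph{not} true that every positive and every negative contribution of the same rhombus share a $G$\dash edge (or even a vertex). Concretely, the negative contribution $\rhpoulMl$ is a single turn lying entirely in one hive triangle of $\rhc$ (it uses the black vertex of that triangle and the white vertices on the two outer sides of that triangle), while the positive contribution $\rhpollWr$ is a single turn lying entirely in the \emph{other} hive triangle; these two paths are completely disjoint. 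So the finite inspection you defer to would not close the gap --- it would reveal that the pair $(\rhpoulMl,\rhpollWr)$ (and its rotations/reflections) is not excluded by any edge- or vertex-disjointness argument, and this is exactly the pair that would give $\s \rhc c = -1+1 = 0$ and break the lemma.

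What actually excludes this pair is a planarity (Jordan curve) argument, which is what the paper invokes as ``a topological argument''. If the cycle $c$ used both single turns, then $c\cap E_\varrho$ would consist of exactly these two contributions, so the remaining two arcs of $c$ lie outside the rhombus and connect the four entry/exit white vertices on the rhombus boundary in an interleaved pattern; two disjoint arcs in the complement of the rhombus joining interleaved boundary points must cross, contradicting that $c$ is a simple cycle in the planar graph $G$. (The paper splits the cases differently but to the same effect: if the negative contribution is one of the two-turn paths $\rhpoulMrr$ or $\rhpolrWrr$, it meets every other contribution and hence is the unique contribution, giving slack $-1$; if it is a single turn such as $\rhpoulMl$, the topological argument shows the only possible second contribution is the other negative single turn $\rhpolrWl$, again giving negative slack.) Your proof becomes correct once the edge-sharing claim is replaced by this planarity argument, but as written the key step fails.
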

\begin{proof}
According to Proposition~\ref{pro:slackcalc},
if $c$ does not use any negative contribution in $f$\dash flat rhombi,
then $f+\varepsilon c \in P$ for $\varepsilon$ small enough.
Conversely, assume that $c$ uses a negative contribution in an $f$\dash flat rhombus $\rhc$.
If $c$ uses $\rhpoulMrr$ or $\rhpolrWrr$, then $c$ uses no other contribution in $\rhc$
and hence $\s \rhc {f+\varepsilon c} = -\varepsilon < 0$.
If $c$ uses $\rhpoulMl$, then by a topological argument, the only other contribution
that $c$ can use is $\rhpolrWl$, which is also negative.
As before we conclude $\s \rhc {f+\varepsilon c} < 0$.
The argument for $c$ using $\rhpolrWl$ is analogous.
\end{proof}

\begin{definition}
We say that $f,g\in P_\Z$ are \emph{neighbors} iff $g-f$ is induced by a cycle in~$G$.
The resulting graph with the set of vertices $P_\Z$ is also denoted by~$P_\Z$
and it is called the \emph{Littlewood-Richardson graph} or \emph{LR graph} for short.
The neighborhood of $f$ is denoted with~$\Gamma(f)$.
\end{definition}
Clearly, the neighborhood relation is symmetric.
In the next Definition~\ref{def:secure} and in Proposition~\ref{pro:secureneighbors}
we focus on which cycles serve as edges in the LR graph~$P_\Z$.
\begin{definition}
\label{def:secure}
 A rhombus $\varrho$ is called \emph{nearly $f$\dash flat}, if $\s \varrho f = 1$.
 We call a cycle~$c$ $f$\dash \emph{secure}, if $c$ is $f$\dash hive preserving
 and if additionally $c$ does not use both negative contributions $\rhpoulMl$ and $\rhpolrWl$
 at the acute angles of any nearly $f$\dash flat rhombus~$\rhc$.\end{definition}
\begin{proposition}[{\protect\cite[Prop.~3.8]{bi:12}}]
\label{pro:secureneighbors}
Assume $f\in P_\Z$. 
If $g\in P_\Z$ is a neighbor of $f$, then $g-f$ is an $f$-secure cycle.
Conversely, if $c$ is a proper $f$-secure cycle, then $f+c\in P_\Z$ is a neighbor of~$f$.
\end{proposition}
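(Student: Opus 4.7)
The plan is to prove the two directions separately by analyzing the slacks that a cycle contributes.

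For the forward direction, assume $g\in P_\Z$ is a neighbor of $f$, and let $c$ denote a cycle in $G$ inducing $g-f$. I first check that $c$ is proper: a border white vertex of $G$ is adjacent only to the outer vertex and to exactly one hive triangle center, so if $c$ passed through the outer vertex it would necessarily use the edge from some border white vertex to its adjacent center, altering the throughput on that border edge of $\Delta$; this would contradict $\delta(k,g)=\delta(k,f)=\bar\delta(k)$. Next, $c$ is $f$-hive preserving by convexity of $P$: since $f,g\in P$, the segment $f+\varepsilon c$ lies in $P$ for all $\varepsilon\in[0,1]$, so Definition~\ref{def:fhivepres} is fulfilled. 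Finally, to rule out the forbidden pattern at a nearly $f$-flat rhombus $\rhc$: if $c$ used both $\rhpoulMl$ and $\rhpolrWl$, then Proposition~\ref{pro:slackcalc} would give $\s\rhc c=(-1)+(-1)=-2$, whence $\s\rhc g=\s\rhc f+\s\rhc c=1-2=-1<0$, contradicting $g\in P$.

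For the reverse direction, assume $c$ is a proper $f$-secure cycle. Since a proper cycle cannot touch any border white vertex (such a vertex has degree two with one neighbour being the outer vertex), the border throughputs of $f+c$ agree with those of $f$, and integrality of $f+c$ is immediate. It remains to verify $\s\varrho(f+c)\geq 0$ for every rhombus $\varrho$. I split by the value of $\s\varrho f$: when $\s\varrho f\geq 2$, Claim~\ref{cla:smallabsoluteslack} gives $\s\varrho c\geq -2$ and we are done; when $\s\varrho f=0$, Lemma~\ref{lem:fhivepresiffnoneg} applied to the $f$-hive preserving cycle $c$ shows that $c$ uses no negative contribution in $\varrho$, so $\s\varrho c\geq 0$ by Proposition~\ref{pro:slackcalc}.

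The main obstacle is the intermediate case $\s\varrho f=1$, where $\s\varrho c\geq -1$ is required but Claim~\ref{cla:smallabsoluteslack} only yields $\s\varrho c\geq -2$. Suppose $\s\varrho c=-2$; by Proposition~\ref{pro:slackcalc} the set $c\cap E_\varrho$ is the edge-disjoint union of two negative slack contributions from $\p_-(\rhc)=\{\rhpoulMl,\rhpolrWl,\rhpoulMrr,\rhpolrWrr\}$. A brief geometric case distinction over the $\binom{4}{2}=6$ unordered pairs shows that each of the long contributions $\rhpoulMrr$ and $\rhpolrWrr$ occupies enough edges and interior white vertices of $\varrho$ to rule out disjoint coexistence with any second member of $\p_-(\rhc)$; the only pair that remains combinatorially feasible inside a single cycle is $\{\rhpoulMl,\rhpolrWl\}$ sitting at the two acute corners. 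Since $c$ is $f$-secure, this pair is excluded by Definition~\ref{def:secure}, hence $\s\varrho c\geq -1$, completing the argument. This last geometric check, though short, is the technical heart of the reverse direction.
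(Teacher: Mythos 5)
Your proof is correct. Note that the paper itself does not prove this proposition --- it imports it from \cite[Prop.~3.8]{bi:12} --- so there is no in-paper argument to compare against; but your argument is the natural one and uses exactly the ingredients the paper supplies (convexity of $P$, Claim~\ref{cla:smallabsoluteslack}, Proposition~\ref{pro:slackcalc}, Lemma~\ref{lem:fhivepresiffnoneg}). The one step you flag as the ``technical heart'' --- that among the six pairs of negative contributions only $\{\rhpoulMl,\rhpolrWl\}$ can be used simultaneously by a single cycle --- is indeed correct and is easy to certify: each of $\rhpoulMrr$ and $\rhpolrWrr$ passes through \emph{both} black vertices of the rhombus, every slack contribution uses at least one of these black vertices, and a cycle visits each vertex at most once; this is precisely the topological fact the paper itself invokes in its proof of Lemma~\ref{lem:fhivepresiffnoneg}. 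Your extra verification that a neighbor-inducing cycle is proper (via the degree-two border white vertices) is not needed for the forward implication as stated, but it is harmless and the same observation correctly handles the border throughputs in the reverse direction.
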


One main result in \cite{bi:12} is the following Theorem~\ref{thm:connectedness}.
\begin{theorem}[{Connectedness Theorem, \protect\cite[Thm.~3.12]{bi:12}}]
\label{thm:connectedness}
The LR graph $P_\Z$ is connected.
\end{theorem}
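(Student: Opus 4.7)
The plan is to prove connectedness by induction on $\|d\|_1 := \sum_{e \in E(G)} \tilde d(e)$, where $d := g - f$ for $f, g \in P_\Z$ and $\tilde d$ is the reduced representative of $d$. The base case $\|d\|_1 = 0$ forces $f = g$. For the inductive step it suffices to produce a proper $f$-secure cycle $c$ whose flow directions agree with $\tilde d$ on every edge of $c$: Proposition~\ref{pro:secureneighbors} then provides the neighbor $f + c \in P_\Z$, and one checks $\|g - (f+c)\|_1 = \|d\|_1 - |c| < \|d\|_1$, so the inductive hypothesis walks us from $f + c$ to $g$ inside $P_\Z$.

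The first structural step is a cycle decomposition of $\tilde d$. Because $\delta(k,f) = \delta(k,g) = \bar\delta(k)$ for every border edge $k$ of $\Delta$, we have $\delta(k,d) = 0$; flow conservation at each border white vertex then forces $\tilde d$ to vanish on every edge incident to the outer black vertex. Thus $\tilde d$ is a nonnegative, divergence-free integer flow supported inside the honeycomb graph, and peeling off simple directed cycles from its support yields a decomposition $\tilde d = \sum_{i=1}^m c_i$ as a nonnegative integer combination of proper directed cycles, each oriented along $\tilde d$. Every such $c_i$ automatically satisfies the directional compatibility required above, so the problem reduces to producing at least one cycle in the decomposition (possibly after modifying it) that is $f$-secure.

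The main obstacle lies in guaranteeing $f$-security. The crucial leverage comes from $g \in P$: linearity of the slack and $\s\varrho g \geq 0$ give $\s\varrho d \geq -\s\varrho f$ for every rhombus $\varrho$, hence $\s\varrho d \geq 0$ at every $f$-flat rhombus and $\s\varrho d \geq -1$ at every \nff one. Combined with $\s\varrho d = \sum_i \s\varrho{c_i}$ and the bound $\s\varrho{c_i} \in \{-2,-1,0,1,2\}$ from Claim~\ref{cla:smallabsoluteslack}, any cycle $c_i$ carrying a forbidden negative contribution (a $\rhpoulMl$ or $\rhpolrWl$ at an $f$-flat or \nff rhombus of the wrong type) must be paired in the decomposition with another cycle $c_j$ carrying a compensating positive contribution at the same $\varrho$. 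This is exactly what enables the exchange.

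If no individual $c_i$ is $f$-secure, the plan is to pick such an offending pair $(c_i, c_j)$ at a witness rhombus $\varrho$ and perform a local surgery: splice $c_i$ and $c_j$ near $\varrho$ and re-decompose the Eulerian multigraph $c_i \cup c_j$ into two new directed cycles $c'_i, c'_j$ with $c_i + c_j = c'_i + c'_j$ as flows, chosen so that the forbidden contribution at $\varrho$ disappears. The hexagon equality (Claim~\ref{cla:BZ}) controls how this swap affects slacks at rhombi sharing a hive triangle with $\varrho$. To keep the process from looping, a lexicographic monovariant is needed — for instance, counting forbidden contributions with rhombi weighted by their distance from the boundary of $\Delta$, so that the planarity of $G$ prevents a surgery at an innermost violation from creating new outer violations. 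Verifying that some such monovariant strictly decreases under every surgery, and hence that the procedure terminates in a decomposition containing an $f$-secure cycle, is the delicate combinatorial core that I expect to be the main difficulty; once that key lemma is in place, the induction on $\|d\|_1$ concludes the proof.
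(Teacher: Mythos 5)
First, note that this paper does not prove the Connectedness Theorem at all: it is quoted verbatim from \cite[Thm.~3.12]{bi:12}, so the comparison has to be against the proof given there. Your opening moves are sound and do match the natural starting point: the induction on $\|g-f\|_1$, the observation that $\tilde d$ vanishes on edges at the outer vertex (so the flow decomposition consists of proper cycles oriented along $\tilde d$), and the compensation argument from $\s \varrho d \ge -\s \varrho f$ are all correct. The problem is that everything after that --- which is the entire content of the theorem --- is missing. Your surgery step is not well-defined: two cycles $c_i, c_j$ contributing opposite-sign slack at the same rhombus $\varrho$ need not share any vertex, since a negative contribution at an acute corner lives in one hive triangle of $\varrho$ while the compensating positive contribution may live entirely in the other hive triangle; vertex-disjoint cycles admit no nontrivial re-decomposition of $c_i\cup c_j$. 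Even when the two cycles do meet, meeting in a single vertex produces one closed walk visiting that vertex twice rather than two cycles, so the splice requires at least two shared vertices. Finally, the termination argument is explicitly deferred: you name a candidate monovariant but verify nothing about it, and a surgery localized at one rhombus can perfectly well create new forbidden contributions at overlapping rhombi, so termination is exactly the point at issue.

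The actual proof in \cite{bi:12} avoids cycle surgery altogether. It works in the residual turn-digraph $\resf f$ (restricted to turns compatible with $d$), takes a \emph{shortest} turncycle there, and uses the machinery that this paper recalls in Sections~\ref{sec:residual} and~\ref{sec:shoturncyc} --- no reverse turnvertices (Proposition~\ref{pro:noreverse}), non-overlapping special rhombi (Proposition~\ref{pro:special}), rerouting at special rhombi, and the analysis of critical rhombi behind the Shortest Turncycle Theorem~\ref{thm:shoturncyc} --- to show that the resulting cycle is ordinary, $f$-hive preserving, and $f$-secure, and that $f$ plus this cycle is strictly closer to $g$. That shortest-turncycle analysis is precisely the ``delicate combinatorial core'' you acknowledge you have not supplied, so the proposal as it stands is not a proof.
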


\section{Enumerating hive flows}
\label{sec:enumeration}
In this section we give a nontechnical overview of the enumeration algorithms.

The following theorem states that (a superset of) the neighborhood $\Gamma(f)$ can be efficiently enumerated.
We postpone the proof of Theorem~\ref{thm:neigh} to Section~\ref{sec:neighborhoodgen}.
\begin{theorem}[Neighbourhood generator]
\label{thm:neigh}
There exists an algorithm \mbox{\textsc{NeighGen}} which on input $f \in P_\Z$ outputs the elements of a set
$\Gamm f \subseteq P_\Z$ one by one such that $\Gamma(f) \subseteq \Gamm f$.
The computation of the first $k$ elements takes time $\gO\big(k \cdot \poly(n)\big)$.
\end{theorem}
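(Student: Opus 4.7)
The plan is to reduce the enumeration of $\Gamm f$ to the enumeration of proper $f$-secure cycles and then to invoke a standard polynomial-delay cycle enumeration routine. By Proposition~\ref{pro:secureneighbors}, the neighborhood $\Gamma(f)$ coincides with the image of the map $c \mapsto f+c$ taken over all proper $f$-secure cycles $c$ in $G$, and each such $f+c$ lies in $P_\Z$. Hence it suffices to enumerate (a superset of) the proper $f$-secure cycles in $G$ with polynomial delay and to output $f+c$ for each enumerated cycle.

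First I would construct a directed subgraph $R_f$ of $G$ by replacing each undirected edge by its two oriented arcs and removing every arc that belongs to a negative slack contribution $\rhpoulMl$, $\rhpolrWl$, $\rhpoulMrr$, or $\rhpolrWrr$ of some $f$-flat rhombus. By Lemma~\ref{lem:fhivepresiffnoneg}, the proper cycles of $G$ that lie entirely inside $R_f$ are precisely the proper $f$-hive preserving cycles of $G$; in particular every proper $f$-secure cycle appears as a simple directed cycle of $R_f$.

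Second, I would run a classical polynomial-delay algorithm for enumerating all simple directed cycles of $R_f$, for instance Johnson's algorithm, which achieves delay $\gO(|V(G)|\cdot|E(G)|) = \poly(n)$ per cycle. For each enumerated cycle $c$ I would test in polynomial time whether (i) $c$ is proper, i.e.\ avoids the outer vertex of $G$, and (ii) $c$ is $f$-secure, which by Definition~\ref{def:secure} only amounts to checking that no nearly $f$-flat rhombus has both contributions $\rhpoulMl$ and $\rhpolrWl$ present in $c$. When both tests succeed I emit $f+c$ as the next element of $\Gamm f$; by Proposition~\ref{pro:secureneighbors} this element belongs to $P_\Z$, and every element of $\Gamma(f)$ is produced this way.

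The main obstacle is to guarantee that the delay between consecutive outputs remains $\poly(n)$ even after the security filter discards some of the cycles produced by Johnson's algorithm. The constraint at a nearly $f$-flat rhombus $\varrho$ involves only the four arcs that make up $\rhpoulMl$ and $\rhpolrWl$ at the two acute-angle vertices of $\varrho$, and there are only $\gO(n^2)$ such rhombi. I would resolve the obstacle by absorbing the security constraint into the digraph: at each nearly $f$-flat rhombus one splits the two acute-angle white vertices into a constant-size gadget whose internal wiring forces every simple directed cycle through it to use at most one of the two forbidden paths. The resulting modified digraph $R'_f$ still has polynomial size, its simple directed cycles are in bijection with the proper $f$-secure cycles of $G$, and Johnson's algorithm applied to $R'_f$ yields the desired bound $\gO\big(k \cdot \poly(n)\big)$ on the total time to produce the first $k$ outputs.
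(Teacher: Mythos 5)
Your reduction to enumerating proper $f$\dash secure cycles is the right starting point (and matches Proposition~\ref{pro:secureneighbors}), but the proposal stands or falls with the claim that all constraints can be absorbed into a static, polynomial-size digraph whose simple directed cycles biject with the proper $f$\dash secure cycles. That claim is asserted, not proved, and it is precisely the hard part of the theorem; I do not believe it holds as described. Two concrete problems. First, the negative slack contributions $\rhpoulMl$, $\rhpolrWl$, $\rhpoulMrr$, $\rhpolrWrr$ of an $f$\dash flat rhombus are \emph{turns} (paths of length two through a black vertex) and \emph{pairs of consecutive turns}, not single arcs of $G$. Every arc of $G$ incident to a black vertex is shared by several turns through that vertex, so ``removing every arc that belongs to a negative slack contribution'' also destroys neutral and positive contributions and hence kills legitimate $f$\dash secure cycles; your $R_f$ does not have the property you claim for it. This is exactly why the paper replaces $G$ by the turn digraph $R$ (vertices are turns, edges are composable pairs of turns), where these objects become deletable vertices and edges --- at the price that simple cycles of $R_f$ now include non-ordinary turncycles that correspond to no cycle of $G$, so the rejection problem reappears in a new guise.

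Second, the gadget for nearly $f$\dash flat rhombi cannot work as a purely local construction. The two forbidden turns $\rhpoulMl$ and $\rhpolrWl$ sit at the two acute corners of the rhombus and do not share a vertex of $G$. Any gadget that forces a simple cycle to use at most one of them must make the two turns pass through a common vertex, and then a simple cycle may enter through the predecessor of one turn and leave through the successor of the other --- a ``teleportation'' across the rhombus that corresponds to no path in $G$. Filtering out these spurious cycles puts you back in the situation of uncontrolled rejections, which is the very obstacle you set out to avoid. The paper explicitly isolates the corresponding difficulty as the Secure Extension Problem~\ref{prob:secure}, states that it does not know how to solve it, and instead builds the generator by growing partial turnpaths, testing the weaker, path-dependent notion of $f$\dash extendability on $\resp{f}{p}$, and invoking the Shortest Turncycle Theorem~\ref{thm:shoturncyc} so that every dead end of the recursion still prints \emph{some} element of $P_\Z$ --- which is also why the paper only obtains a superset $\Gamm f \supseteq \Gamma(f)$ rather than the exact neighborhood your argument promises. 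That your approach would yield $\Gamma(f)$ exactly is itself a sign that a key difficulty has been bypassed rather than solved.
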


We define the directed graph $\tilde P_\Z$ to be the graph with vertex set $V(P_\Z)$ and
(possibly asymmetric) neighborhood function $\tilde \Gamma$ as given by Theorem~\ref{thm:neigh}.
Since $P_\Z$ is connected by Theorem~\ref{thm:connectedness}, $\tilde P_\Z$ is strongly connected.
Therefore,
breadth-first-search on $\tilde P_\Z$ started at any hive flow in $P_\Z$ visits all flows in $P_\Z$.
A variant of this breadth-first-search is realized in the following Algorithm~\ref{alg:lrthreshold},
which gets an additional threshold parameter $t$ such that Algorithm~\ref{alg:lrthreshold} visits at most $t$ flows.
\begin{algorithm}[H]
\caption{\algocaptionwithoutparam{LR-Threshold}}
\nopar\label{alg:lrthreshold}
\begin{algorithmic}[1]
\REQUIRE Partitions $\la$, $\mu$, $\nu$ with $|\la|+|\mu|=|\nu|$; $t \in \N_{>0}$
\ENSURE \True, if $\LRC \la \mu \nu \geq t$. \False otherwise.
\STATE If $P(\la,\mu,\nu) = \emptyset$, \Return \False.\nopar\label{alg:firstreturnfalse}
\STATE Compute an integral hive flow $f \in P(\la,\mu,\nu)_\Z$.
\STATE Initially, set $S \leftarrow \{f\}$, $T \leftarrow \emptyset$.\nopar\label{alg:invariants}
\WHILE{$T \subsetneq S$}
\STATE Choose an $f \in S \setminus T$.
\FOR{each $g \in \Gamm f$ generated one by one by \textsc{NeighGen} via Theorem~\ref{thm:neigh}}
 \STATE $S \leftarrow S \cup \{g\}$.
 \STATE If $|S| \geq t$, then \Return \True.\nopar\label{alg:neverreturn}
\ENDFOR
\STATE $T \leftarrow T \cup \{f\}$.
\ENDWHILE
\STATE \Return \False.\nopar\label{alg:returnfalse}
\end{algorithmic}
\end{algorithm}
The first two lines of Algorithm~\ref{alg:lrthreshold} deal with computing a hive flow $f \in P_\Z$ if there exists one.
This can be done in time strongly polynomial in $n$ using Tardos' algorithm~\cite{Tar:86, gll:93, knta:99} as stated in~\cite{GCT3} and~\cite{deloera:06}.
We can also use the combinatorial algorithm presented in \cite{bi:12} for this purpose, which is especially designed for this problem, but note that although it has a much smaller exponent in the running time, its running time depends on the bitsize of the input partitions.
Here, the \emph{running time} is defined to be the number of additions, multiplications and comparisons.
In practice, the algorithm in \cite{bi:12} may be the better choice, but in this paper we focus on algorithms
whose running time does not depend on the input bitsize.
If Tardos' algorithm is used as a subalgorithm in Algorithm~\ref{alg:lrthreshold},
then this is the case and hence we choose this option.
\begin{theorem}
Given partitions $\la$, $\mu$, $\nu$ with $|\la|+|\mu|=|\nu|$ and a natural number $t \geq 1$,
then Algorithm~\ref{alg:lrthreshold} decides $\LRC \la \mu \nu \geq t$
in time $\gO\big(t^2 \cdot \poly(n)\big)$.
\label{thm:decide}
\end{theorem}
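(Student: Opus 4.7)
The plan is to verify correctness via Theorem~\ref{thm:connectedness} and Proposition~\ref{pro:flowdescription}, then bound the running time by carefully counting \textsc{NeighGen}'s emissions over the course of the BFS-like traversal.

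For correctness, I would first establish the loop invariants $T \subseteq S \subseteq P(\la,\mu,\nu)_\Z$ (the inclusion $S \subseteq P_\Z$ following from Theorem~\ref{thm:neigh}) and ``for every $f \in T$, the whole set $\tilde\Gamma(f)$ has been added to $S$''; both are immediate by induction on the outer while loop. A return of \True at line~\ref{alg:neverreturn} triggers when $|S| \geq t$, so $\LRC \la \mu \nu = |P_\Z| \geq |S| \geq t$ by Proposition~\ref{pro:flowdescription}. A return of \False at line~\ref{alg:firstreturnfalse} happens precisely when $P = \emptyset$, giving $\LRC \la \mu \nu = 0 < t$. A return of \False at line~\ref{alg:returnfalse} occurs when $T = S$, in which case $\tilde\Gamma(f) \subseteq S$ for every $f \in S$; since $\Gamma(f) \subseteq \tilde\Gamma(f)$ by Theorem~\ref{thm:neigh}, the set $S$ is closed under the LR-graph neighbourhood $\Gamma$, and Theorem~\ref{thm:connectedness} combined with $S \neq \emptyset$ forces $S = P_\Z$, yielding $\LRC \la \mu \nu = |S| < t$.

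For the running time, the initial hive flow in $P_\Z$ (or the detection of $P = \emptyset$) is produced in $\poly(n)$ time by Tardos' algorithm. The invariant $|S| \leq t$ is maintained throughout, because the algorithm returns the instant $|S|$ reaches $t$, so the outer while loop executes at most $t$ times. The key step is bounding the number of elements emitted by a single inner loop. Let $s_0 := |S|$ at the start of such an inner loop. Since $\tilde\Gamma(f)$ is a set and \textsc{NeighGen} does not repeat within one call, every emitted $g$ is either a duplicate (an element of $\tilde\Gamma(f) \cap S$ present when the inner loop started, of which there are at most $s_0 \leq t$) or a newly added flow, each of the latter increasing $|S|$ by one. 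The loop halts either by exhausting $\tilde\Gamma(f)$ with $|S| < t$ (so at most $t - 1 - s_0$ new flows) or by triggering \True at $|S| = t$ (so exactly $t - s_0$ new flows). In either case the inner loop emits $\gO(t)$ elements; summed over the at most $t$ outer iterations this gives $\gO(t^2)$ total emissions. Theorem~\ref{thm:neigh} turns this count into $\gO(t^2 \cdot \poly(n))$ generation time, and storing $S$ in a hash table (each hive flow admits a $\poly(n)$-bit encoding) keeps insertion and membership at $\poly(n)$ per emission, for a total running time of $\gO(t^2 \cdot \poly(n))$.

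The main obstacle is the emission-count bound: it relies crucially on the fact that \textsc{NeighGen} enumerates the elements of a \emph{set} without internal repetition, so that the ``wasted'' generations within an inner loop can only be pre-existing members of $S$, of which there are never more than $t$. Granting this, the algorithm is essentially a textbook BFS with polynomial delay per new vertex, and the claimed running time follows by the routine multiplication $t \text{ (outer iterations)} \times t \text{ (emissions per iteration)} \times \poly(n) \text{ (cost per emission)}$.
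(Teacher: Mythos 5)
Your proposal is correct and follows essentially the same route as the paper: the same three loop invariants for correctness (with connectedness of the LR graph closing the \False case), the bound of $t$ on the outer iterations, and an $\gO(t)$ bound on emissions per inner loop combined with Theorem~\ref{thm:neigh} to get $\gO(t^2\cdot\poly(n))$. Your duplicate-versus-new accounting for the inner loop is a slightly more detailed version of the paper's observation that either $|\Gamm f|<t$ or the algorithm returns after $t$ distinct emissions, but the argument is the same in substance.
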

\begin{proof}
Recall that according to Proposition~\ref{pro:flowdescription} we have $|P_\Z|=\LRC \la \mu \nu$.
Now observe that, starting after line~\ref{alg:invariants}, Algorithm~\ref{alg:lrthreshold} preserves the three invariants $T \subseteq S \subseteq P_\Z$,
$|S|\leq t$ and  $\forall f \in T: \Gamm f \subseteq S$.

If the algorithm returns \True, then $|S|\geq t$.
As $S \subseteq P_\Z$ and $|P_\Z|=\LRC \la \mu \nu$, we have $\LRC \la \mu \nu \geq t$.

If the algorithm returns \False, then $|S| < t$ and $S = T$.
Moreover, $\Gamm f \subseteq S$ for all $f \in S$.
Since the digraph $\tilde P_\Z$ is strongly connected, it follows that $P_\Z = S$.
Therefore we have $\LRC \la \mu \nu = |P_\Z| = |S| < t$.

We have shown that the algorithm works correctly.

Now we analyze its running time.
Recall that the first two lines of Algorithm~\ref{alg:lrthreshold} run in time $\poly(n)$,
because of Tardos' algorithm.
The outer loop runs at most $t$ times, because in each iteration, $|T|$ increases and $|T| \leq |S| \leq t$.
If in the inner loop we have $|\Gamm f|< t$, then the inner loop runs for at most $t-1$ iterations
and hence $\Gamm f$ can be generated in time $\gO\big(t \cdot \poly(n)\big)$ via Theorem~\ref{thm:neigh}.
If in the inner loop we have $|\Gamm f|\geq t$, then after $t$ iterations we have $|S|\geq t$ and the algorithm returns immediately.
The first $t$ elements of $\Gamm f$ can be generated via Theorem~\ref{thm:neigh} in time $\gO\big(t \cdot \poly(n)\big)$.
Therefore we get an overall running time of $\gO\big(t^2 \cdot \poly(n)\big)$.
\end{proof}

\begin{theorem}
\label{thm:compute}
Given partitions $|\la|+|\mu|=|\nu|$. Then $\LRC \la \mu \nu$ can be computed
in time $\gO\big((\LRC \la \mu \nu)^2 \cdot \poly(n)\big)$ by a variant of Algorithm~\ref{alg:lrthreshold}.
\end{theorem}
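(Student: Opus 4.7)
The plan is to modify Algorithm~\ref{alg:lrthreshold} by removing the threshold $t$ entirely and running the breadth-first search to completion. Specifically, I would initialize exactly as before (returning $0$ immediately if $P(\la,\mu,\nu)=\emptyset$), delete the early-termination test on line~\ref{alg:neverreturn} so that the inner \textbf{for}-loop always exhausts $\Gamm f$, and replace the final instruction on line~\ref{alg:returnfalse} by one that returns $|S|$.

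For correctness I would reuse the invariants from the proof of Theorem~\ref{thm:decide}: throughout the execution one has $T\subseteq S\subseteq P_\Z$ and $\Gamm f\subseteq S$ for every $f\in T$. When the outer loop terminates we have $S=T$, so $\Gamm f\subseteq S$ for every $f\in S$, and since $\Gamma(f)\subseteq\Gamm f$ by Theorem~\ref{thm:neigh}, the set $S$ is closed under taking $P_\Z$\dash neighbors. The Connectedness Theorem~\ref{thm:connectedness} then forces $S=P_\Z$, and Proposition~\ref{pro:flowdescription} gives $|S|=|P_\Z|=\LRC \la \mu \nu$.

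For the running time, the crucial observation is that $\Gamm f\subseteq P_\Z$ by Theorem~\ref{thm:neigh}, so $|\Gamm f|\le \LRC \la \mu \nu$ for every $f$. The outer loop iterates at most $\LRC \la \mu \nu$ times, because $|T|$ grows strictly and is bounded by $|P_\Z|$. In each iteration, enumerating all of $\Gamm f$ via Theorem~\ref{thm:neigh} costs $\gO\big(\LRC \la \mu \nu\cdot\poly(n)\big)$, and if $S$ is stored in a balanced search tree keyed on the integer throughput vector (which has length $\gO(n^2)$), then the membership and insertion step costs $\poly(n)$ per generated element. Combining this with the $\poly(n)$ cost of the initial Tardos step yields the claimed $\gO\big((\LRC \la \mu \nu)^2\cdot\poly(n)\big)$ bound. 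The only mild subtlety is the dictionary implementation for $S$, but any standard data structure suffices, so there is no genuine obstacle: the argument is essentially an immediate adaptation of the proof of Theorem~\ref{thm:decide}, with the threshold replaced by the natural stopping criterion $S=T$.
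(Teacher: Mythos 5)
Your proposal is correct and matches the paper's proof: the paper likewise runs Algorithm~\ref{alg:lrthreshold} with the threshold effectively removed (formally $t=\infty$), returns $|S|$ at the end, and observes that the execution coincides with the run for $t=\LRC\la\mu\nu+1$, so the $\gO\big(t^2\cdot\poly(n)\big)$ bound of Theorem~\ref{thm:decide} gives the claimed running time. Your direct re-derivation of the time bound via $|\Gamm f|\le\LRC\la\mu\nu$ and the bound on the number of outer iterations is just an unrolled version of the same argument.
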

\begin{proof}
Use Algorithm~\ref{alg:lrthreshold} with the input $t = \infty$ as a formal symbol,
but instead of returning \False in line~\ref{alg:firstreturnfalse}, return 0 and instead of returning \False in line~\ref{alg:returnfalse}, return $|S|$.
Note that the algorithm never returns \True, because ``$|S|>\infty$'' in line~\ref{alg:neverreturn} is always false.
If the algorithm terminates, then $P_\Z=S$ and thus the algorithm works correctly.
Note that if started with $t=\infty$ the algorithm behaves exactly as if started with $t = \LRC \la \mu \nu+1$.
Thus it runs in time $\gO\big((\LRC \la \mu \nu)^2 \cdot \poly(n)\big)$.
\end{proof}

\section{The residual digraph $R_f$}
\label{sec:residual}
We would like to have a direct algorithm that prints out the elements of the neighborhood $\Gamma(f)$.
A naive approach would list all cycles $c$ and reject those with $f+c \notin P$.
But we cannot control this algorithm's running time when there are many rejections.
Note that there are exponentially many cycles!
The solution is to a priori generate only those cycles $c$ with $f+c \in P$.
According to Lemma~\ref{lem:fhivepresiffnoneg}, these $c$ use no negative contributions in $f$\dash flat rhombi.
So we now introduce the digraph $\resf f$ in which the possibility of using
negative contributions in $f$\dash flat rhombi is eliminated.
The digraph $\resf f$ will arise as a subgraph of the following digraph $\res$.
\begin{definition}[Digraph $\res$]
A \emph{turn} is defined to be a path in $G$ consisting of two edges,
starting at a white vertex, using a black vertex of a hive triangle, and ending at a different white vertex.
The {\em digraph $\res$} has as vertices the turns, 
henceforth called \emph{turnvertices}.
The edges of $\res$ are ordered pairs of turns that can be concatenated to a path in $G$,
henceforth called \emph{turnedges}.
\end{definition}
Note that there are six turnvertices in each hive triangle:
\begin{tikzpicture}\draw[rhrhombidraw] (0.0pt,0.0pt) -- (9.238pt,0.0pt) -- (4.619pt,8.0001pt) -- cycle;\draw[thin,-my] (2.3095pt,4.0001pt) arc (240:300:4.619pt);\end{tikzpicture}%
,
\begin{tikzpicture}\draw[rhrhombidraw] (0.0pt,0.0pt) -- (9.238pt,0.0pt) -- (4.619pt,8.0001pt) -- cycle;\draw[thin,-my] (2.3095pt,4.0001pt) arc (60:0:4.619pt);\end{tikzpicture}%
,
\begin{tikzpicture}\draw[rhrhombidraw] (0.0pt,0.0pt) -- (9.238pt,0.0pt) -- (4.619pt,8.0001pt) -- cycle;\draw[thin,-my] (4.619pt,0.0pt) arc (0:60:4.619pt);\end{tikzpicture}%
,
\begin{tikzpicture}\draw[rhrhombidraw] (0.0pt,0.0pt) -- (9.238pt,0.0pt) -- (4.619pt,8.0001pt) -- cycle;\draw[thin,-my] (4.619pt,0.0pt) arc (180:120:4.619pt);\end{tikzpicture}%
,
\begin{tikzpicture}\draw[rhrhombidraw] (0.0pt,0.0pt) -- (9.238pt,0.0pt) -- (4.619pt,8.0001pt) -- cycle;\draw[thin,-my] (6.9285pt,4.0001pt) arc (120:180:4.619pt);\end{tikzpicture}%
, and
\begin{tikzpicture}\draw[rhrhombidraw] (0.0pt,0.0pt) -- (9.238pt,0.0pt) -- (4.619pt,8.0001pt) -- cycle;\draw[thin,-my] (6.9285pt,4.0001pt) arc (300:240:4.619pt);\end{tikzpicture}%
.
Pictorially, we can write a turnedge like $\rhpoulMrr:=\big(\rhpoulMr,\rhpcWr\big)$.
Note that a turnedge corresponds to a path in $G$ of length 4.
Each rhombus $\rhc$ contains exactly the eight turnedges $\rhpoulMrr, \rhpoulMrl, \rhpourMll, \rhpourMlr, \rhpollWll, \rhpollWlr, \rhpolrWrr, \rhpolrWrl$.
Paths on $\res$ are called \emph{turnpaths} and cycles on $\res$ are called \emph{turncycles}.
Note that a turnpath can for example use both the turnedge $\rhpoulMrl$ and the turnedge $\rhpourMlr$ in a rhombus $\rhc$,
because both turnedges have no common turnvertex.
We denote by $\pstart p$ the first turnvertex of a turnpath $p$ and
by $\pend p$ its last turnvertex.

By using turnvertices and turnedges to define $\res$
and by focusing on the more complicated graph structure of $\res$ instead of the easy structure of $G$,
we now have the possibility to \emph{delete turnedges},
which is done in the next definition.
\begin{definition}[Digraph $\resf f$]
\label{def:resf}
Let $f \in P_\Z$.
We define the digraph $\resf f$ by deleting from $\res$ the turnvertices $\rhpoulMl$ and $\rhpolrWl$
and the turnedges $\rhpoulMrr$ and $\rhpolrWrr$ for each $f$\dash flat rhombus~$\rhc$.
\end{definition}
Note that the deleted parts correspond exactly to negative slack-contributions in $f$-flat rhombi (cp.~Definition~\ref{def:slack-contr}).

\begin{definition}[Turncycles]
We denote with $C(\res)$ the set of turncycles on $\res$,
and with $C(\resf f)$ the set of turncycles on $\resf f$.
\end{definition}

\begin{definition}[Throughput]
\label{def:newthroughput}
Let $c \in C(\res)$ and let $e \in E(\Delta)$.
For a turnvertex $v \in V(\res)$ we write $c(v)=1$ if $v \in c$ and $c(v)=0$ otherwise.
If $e$ is a diagonal of a rhombus, then we can write $e = \rhsc$
and we define the throughput as
$$
\rhacW(c) := c(\rhpoulMr) + c(\rhpourMl) - c(\rhpcMl) - c(\rhpcMr) \in \{-2,-1,0,1,2\}.
$$
If $e$ is a border edge of $\Delta$, we define the throughput analogously.
\end{definition}
\begin{remark}
Note that the throughput of cycles $c' \in C(G)$ can only range from $-1$ to $1$,
whereas for turncycles $c \in C(\res)$ the throughput can range from $-2$ to $2$.
The larger throughput range makes things complicated and in fact
it is the price we pay for being able to delete single turnedges in Definition~\ref{def:resf}.
\end{remark}


To a turncycle we can associate a flow on~$G$ as follows.

\begin{definition}
The map $\comb:C(\res)\rightarrow \oF(G)$ is the unique linear map preserving the throughput:
For $c \in C(\res)$ we define $\comb(c)$ such that $\rhacW\big(\comb(c)\big) = \rhacW(c)$
for all edges $\rhsc \in E(\Delta)$.
\end{definition}
Note that $\comb$ is well-defined, since $\rhacW(c)+\rhaoulW(c)+\rhaourW(c)=0$, see~\cite[Sec.~2.2]{bi:12}.

We define the slack w.r.t.\ turncycles via $\comb$ as follows:
$\s \rhc c := \s \rhc {\comb(c)} = \rhaoulW\big(\comb(c)\big) + \rhaolrM\big(\comb(c)\big)\in \{-4,-3,\ldots,3,4\}$.

We can determine the slack of rhombi for turncycles similarly to
determining the slack of rhombi for cycles in Proposition~\ref{pro:slackcalc}, as we see in the
following Claim~\ref{cla:slackforturncycles}.
Recall Definition~\ref{def:slack-contr} and interpret turnedges and turnvertices that
start and end at sides of a rhombus $\varrho$ as slack contributions in $\varrho$.
\begin{claim}[{\protect\cite[Lemma 4.4]{bi:12}}]
\label{cla:slackforturncycles}
Let $c$ be a turncycle and $\varrho$ be a rhombus.
The slack $\s \varrho c$ is obtained by adding $1$, $0$, or $-1$
over the turnedges $q$ used by~$c$ in~$\varrho$, according to whether~$q$
is positive, negative, or neutral,
and by further adding
$1$ or $-1$
over the remaining turnvertices $q'$ used by $c$ in $\varrho$, according to whether~$q'$
is positive ($\rhpourMr$ or $\rhpollWr$) or negative ($\rhpoulMl$ or $\rhpolrWl$).
\end{claim}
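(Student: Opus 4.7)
The plan is to expand the slack directly via the throughput definition and then match the resulting expression with the claim term-by-term. I would start from Definition~\ref{def:slack} and the throughput-preserving property of $\pi$, which together give $\s \varrho c = \rhaoulW(c) + \rhaolrM(c)$. Unfolding both summands via Definition~\ref{def:newthroughput} yields a signed sum of indicators $c(v)$ over the turnvertices $v$ adjacent to the two relevant border edges of $\varrho$. After collecting like terms I obtain an expression
\[
\s \varrho c \;=\; \sum_{v} a_v\, c(v),
\]
where $v$ ranges over the twelve turnvertices inside $\varrho$ and each $a_v \in \{-1, 0, +1\}$ is an explicit constant determined only by the type of $v$.

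Next I would decompose the restriction of $c$ to $\varrho$ into maximal chains. A chain is either a singleton turnvertex of $c$ with no in-$\varrho$ turnedge of $c$ incident to it, or a sequence $v_0, q_1, v_1, \ldots, q_k, v_k$ alternating turnvertices and turnedges all lying in $\varrho$. Since the diagonal white vertex of $\varrho$ lies in the interior of $\varrho$ and every turnedge of $\res$ through it stays inside $\varrho$, any diagonal-adjacent turnvertex of $c$ in $\varrho$ must be paired with an in-$\varrho$ turnedge. Consequently, singleton chains can only arise from the four acute-corner turnvertex types $\rhpoulMl, \rhpourMr, \rhpollWr, \rhpolrWl$.

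The core of the proof is then a per-type case check with three parts. First, for each of the four corner turnvertex types, verify that $a_v$ matches the claimed sign ($+1$ for $\rhpourMr$ and $\rhpollWr$, $-1$ for $\rhpoulMl$ and $\rhpolrWl$). Second, for each of the eight turnedges $q$ in a rhombus, verify that $a_{\pstart q} + a_{\pend q}$ equals the positive, negative, or neutral classification of $q$ from Definition~\ref{def:slack-contr}, i.e., $+1$, $-1$, or $0$. Third, for the diagonal-adjacent turnvertex types that can appear strictly inside a longer chain, verify $a_v = 0$; this ensures that the telescoping along any chain $v_0, q_1, \ldots, q_k, v_k$ of length $k \geq 2$ collapses to $\sum_j b_{q_j}$ with no residual endpoint terms. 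Summing chain contributions across all chains recovers the claimed formula.

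The main obstacle is the bookkeeping in these sub-checks: twelve turnvertices, eight turnedges, and three rotational orientations of $\rhc$ together produce many sub-cases. Two simplifications cut the workload substantially. The alternative expressions for $\s \varrho f$ listed immediately after Definition~\ref{def:slack} let me shift which pair of border edges feeds the throughput sum, rendering the three rotational orientations of $\varrho$ interchangeable and reducing to a single canonical orientation. The hexagon equality from Claim~\ref{cla:BZ}, applied to pairs of overlapping rhombi, can be invoked as an independent consistency check on the turnedge classifications.
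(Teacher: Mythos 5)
The paper gives no proof of Claim~\ref{cla:slackforturncycles}: it is imported from \cite[Lemma~4.4]{bi:12}, so there is no in-paper argument to compare against, and your self-contained verification is the right thing to supply. Your argument is correct. The expansion $\s \varrho c=\sum_v a_v\,c(v)$ over the twelve turnvertices of $\varrho$ works provided each of the two throughputs in $\s \varrho c = \rhaoulW(c)+\rhaolrM(c)$ is written with respect to the hive triangle lying \emph{inside} $\varrho$; the flow-conservation identity at the white vertex on a side (the remark $\rhacW(c)+\rhaoulW(c)+\rhaourW(c)=0$ after the definition of $\comb$) guarantees this choice is legitimate. With that choice the four turns meeting the diagonal but not the upper-left or lower-right side get $a_v=0$, the four acute-corner turns get $\pm1$ exactly as in the claim, and the eight sums $a_{\pstart q}+a_{\pend q}$ reproduce the classification of Definition~\ref{def:slack-contr} (read, as you correctly do, as positive $\mapsto +1$, negative $\mapsto -1$, neutral $\mapsto 0$). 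Your key structural point --- that any turn of $c$ incident to the diagonal's white vertex must be continued into the other triangle of $\varrho$, hence is forced into one of the eight listed turnedges --- is precisely what makes the ``remaining turnvertices'' consist only of the four corner types, an implicit assertion of the claim that needed justifying. One simplification you could note: since a turnedge cannot revisit a black vertex, no two of the eight in-$\varrho$ turnedges share a turnvertex, so your chains contain at most one turnedge and the third check (vanishing of $a_v$ at interior vertices of longer chains) is vacuous.
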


\begin{lemma}
\label{lem:direction}
Let $f\in P$. Each turncycle $c$ in $\resf f$ is $f$\dash hive preserving.
\end{lemma}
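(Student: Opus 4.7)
The plan is to verify directly that for sufficiently small $\varepsilon > 0$, the perturbation $f + \varepsilon \comb(c)$ still lies in the hive flow polytope $P$. Two conditions must be checked: the slack inequalities $\s{\varrho}{f + \varepsilon \comb(c)} \geq 0$ for every rhombus $\varrho$, and the prescribed boundary throughputs on $\Delta$.

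For the boundary throughputs I would argue as follows. Turns are defined so that their middle vertex is the black vertex of a hive triangle, never the outer black vertex. Hence no turnedge traverses an edge of $G$ incident to the outer black vertex, and the closed walk in $G$ traced by $c$ avoids all such edges. Since $\comb$ preserves throughputs by definition, Kirchhoff's law at each border white vertex then forces the throughput of $\comb(c)$ on every border edge of $\Delta$ to vanish, so $\delta(k, f + \varepsilon\comb(c)) = \bar\delta(k)$ holds automatically.

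The content-bearing step is the slack inequalities. By linearity,
\[
\s{\varrho}{f + \varepsilon\comb(c)} \;=\; \s{\varrho}{f} + \varepsilon\, \s{\varrho}{c}.
\]
If $\varrho$ is not $f$-flat, then $\s{\varrho}{f} \geq 1$ (integrality of $f$) and $|\s{\varrho}{c}| \leq 4$, so the expression is nonnegative once $\varepsilon \leq 1/4$. If $\varrho$ is $f$-flat, I invoke Claim~\ref{cla:slackforturncycles}: the value $\s{\varrho}{c}$ is a sum of local contributions taken over the turnedges and lone turnvertices that $c$ uses inside $\varrho$, each contribution being $+1$, $0$, or $-1$ according to its classification in Definition~\ref{def:slack-contr}. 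The point is that the four items removed from $\resf f$ at an $f$-flat rhombus by Definition~\ref{def:resf}, namely the turnvertices $\rhpoulMl,\rhpolrWl$ and the turnedges $\rhpoulMrr,\rhpolrWrr$, are exactly the four negative contributions of $\varrho$ listed in $\p_-(\rhc)$. Since $c$ lies in $\resf f$, it uses none of these at $\varrho$, so every contribution to $\s{\varrho}{c}$ is nonnegative, giving $\s{\varrho}{c} \geq 0$.

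Taking $\varepsilon$ smaller than $\min\bigl\{\s{\varrho}{f}/4 : \s{\varrho}{f} > 0\bigr\}$ then yields $f + \varepsilon\comb(c) \in P$, which is exactly the statement that $c$ is $f$-hive preserving. The only step that is not purely mechanical is matching the four deletions of Definition~\ref{def:resf} with the negative-contribution patterns of Definition~\ref{def:slack-contr}; this is the main (but still routine) obstacle, and it is settled by comparing the two pictograms for each of the four deleted items.
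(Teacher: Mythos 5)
Your proof is correct and follows the same route as the paper's own (very terse) argument, which simply invokes the construction of $\resf f$ together with Claim~\ref{cla:slackforturncycles}: the four items deleted at each $f$\dash flat rhombus are precisely the four negative slack contributions, so $\s{\varrho}{c}\geq 0$ on $f$\dash flat rhombi, and a sufficiently small $\varepsilon$ takes care of the remaining rhombi (your explicit check of the border throughputs is a detail the paper leaves implicit). One small remark: the lemma is stated for $f\in P$ rather than $f\in P_\Z$, so the parenthetical appeal to integrality giving $\s{\varrho}{f}\geq 1$ is not available in general, but this is harmless since your final choice $\varepsilon<\min\{\s{\varrho}{f}/4 : \s{\varrho}{f}>0\}$ does not rely on it.
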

\begin{proof}
 By construction of $\resf f$ and Claim~\ref{cla:slackforturncycles},
 see the discussion in \cite{bi:12} after Lemma 4.7 there.
\end{proof}

There is a canonical injective map from the set of proper cycles~$c$ on~$G$
to the set of turncycles $c'$ on~$\res$: consecutive turns in~$c$ correspond to turnedges in~$c'$.
A turncycle $c'$ in the image of this map is \emph{ordinary}, which means the following:
$c'$ uses only a single turnvertex in each hive triangle.
This is the desired behaviour of turncycles when we want to simulate cycles on~$G$.
\begin{definition}
Turnpaths and turncycles are called \emph{ordinary}, if they
use at most one turnvertex in each hive triangle.
\end{definition}
Obviously, there is a bijection between the set of proper cycles on $G$
and the set of ordinary turncycles on $\res$.

The following definition of secure turnpaths is related to the Definition~\ref{def:secure} of secure cycles.
\begin{definition}[Secure turnpaths]
A turnpath $p$ on $\resf f$ is called \emph{$f$\dash secure},
if $p$ is ordinary and if additionally
$p$ does not use both counterclockwise turnvertices $\rhpoulMl$ and $\rhpolrWl$ 
at the acute angles of any nearly $f$\dash flat rhombus $\rhc$.
We define $f$\dash secure turncycles analogously.\end{definition}
Since $f$\dash secure turncycles are ordinary,
there is a bijection between $f$\dash secure proper cycles and $f$\dash secure turncycles.
To prove Theorem~\ref{thm:neigh},
we want to list all $f$\dash secure proper cycles (cf.\ Prop.~\ref{pro:secureneighbors}).
We have seen that we may as well list the $f$\dash secure turncycles.

\section{The Neighbourhood Generator}
\label{sec:neighborhoodgen}
This section is devoted to the proof of Theorem~\ref{thm:neigh} by describing and analyzing
the algorithm \textsc{NeighGen}.
This algorithm is inspired by the binary partitioning method used in \cite{fuma:94}.

Given $f \in P_\Z$, \textsc{NeighGen} prints out the elements of a set $\Gamm f$ with $\Gamma(f) \subseteq \Gamm f \subseteq P_\Z$.
Note that we would like to have a direct algorithm that prints the elements of $\Gamma(f)$,
but we do not know how to do this efficiently.

Although we can treat $f$\dash hive preserving turncycles algorithmically,
there are problems when it comes to $f$\dash secure turncycles.
In fact we do not know how to solve the following crucial
Secure Extension Problem~\ref{prob:secure}.
\begin{problem}[Secure extension problem]\label{prob:secure}
Given $f \in P_\Z$ and an $f$\dash secure turnpath~$p$,
decide in time $\poly(n)$ whether there exists an $f$\dash secure turncycle~$c$ containing~$p$ or not.
\end{problem}
If in Problem~\ref{prob:secure} an extension $c$ exists for a given~$p$, then we call~$p$ \emph{$f$\dash securely extendable}.

The usefulness of having a solution to Problem~\ref{prob:secure} will be made clear in the next subsection,
where we introduce an algorithm \textsc{NeighGen'} that proves Theorem~\ref{thm:neigh} under the assumption that
Problem~\ref{prob:secure} is has a positive solution.

\subsection{A first approach}
\label{subsec:hypo}
Assume that $\Algo$ is an algorithm that on input $(f,p)$ with $f \in P_\Z$ and $p$ an $f$\dash secure turnpath in $\resf f$ returns whether $p$ is $f$\dash securely extendable or not.
Notationally,
\begin{equation}
 \tag{\dag}
 \Algo(f,p)=\begin{cases}
             \True\text{, if $p$ is $f$\dash securely extendable}\\
             \False \text{, otherwise.}
            \end{cases}
\end{equation}
We denote by $T(\Algo,n)$ the worst case running time of $\Algo(f,p)$
over all partitions $\la,\mu,\nu$ into $n$ parts, all $f \in P(\la,\mu,\nu)_\Z$ and all $f$\dash secure turnpaths~$p$ in $\resf f$.
If $\Algo$ solves Problem~\ref{prob:secure}, then $T(\Algo,n)$ is polynomially bounded in~$n$ ---
but remember that we do not know of such~$\Algo$.

The algorithms presented in this subsection use $\Algo$ as a subroutine
and hence they are only polynomial time algorithms if $T(\Algo,n)$ is polynomially bounded.
In fact this subsection is only meant to prepare the reader for the more complicated approach used in the Subsection~\ref{subsec:bypassing},
where $\Algo$ is modified in a way such that polynomial running time is achieved.

The main subalgorithm of this subsection is Algorithm~\ref{alg:findneighHypoth} below.
\begin{algorithm}[h]
\caption{\algocaptionwithoutparam{FindNeighWithBlackBox}}
\nopar\label{alg:findneighHypoth}
\begin{algorithmic}[1]
\REQUIRE $f \in P_\Z$; $p$ an $f$-securely extendable turnpath on $\resf f$;
$\Algo$ as in~$(\dag)$
\ENSURE Prints all integral flows $f+c \in P_\Z$, where $c \in C(G)$ is a cycle that contains~$p$.
Prints at least one element.
\IF{$p$ is not just a turnpath, but a turncycle}
\STATE $\print{f+\comb(p)}$ and \Return.\nopar\label{alg:firstprintHypoth}
\ENDIF
\FOR{\textbf{both} turnedges $e:=(\pend p,z) \in E(\resf f)$\nopar\label{alg:appendHypoth}}
\STATE Concatenate $p' \leftarrow pe$.
\STATE If $p'$ is not $f$\dash secure, \textbf{continue} with the next $e$.\nopar\label{alg:ifnotsecure}
\IF{$\Algo(f,p')$\nopar\label{alg:searchpathHypoth}}
 \STATE Recursively call {\scshape FindNeighWithBlackBox}$(f, p', \Algo)$.
\ENDIF
\ENDFOR
\end{algorithmic}
\end{algorithm}
Note that the statement \mbox{\textbf{for both}} in line~\ref{alg:appendHypoth} means \emph{for all},
as there are at most two turnedges $e$ in $\resf f$ starting at $\pend p$.
\begin{lemma}\label{lem:hypoiscorrectI}
Algorithm~\ref{alg:findneighHypoth} works according to its output specification.
\end{lemma}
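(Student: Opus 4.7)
The plan is to prove the three obligations of the output specification by induction on the number of turnedges still needed to extend $p$ into an $f$-secure turncycle: (i) only valid elements are printed, (ii) every valid element is printed, and (iii) at least one element is printed. The key observation enabling the induction is that $f$-security is hereditary under taking prefixes. Ordinariness is obviously hereditary (a prefix uses a subset of the turnvertices of the full path, so at most one per hive triangle), and the forbidden pattern of Definition~\ref{def:secure} can occur in a prefix only if it already occurs in the full path. Consequently, whenever $c^{\ast}$ is an $f$-secure turncycle extending $p$, every prefix of $c^{\ast}$ that starts with $p$ is itself $f$-secure and $f$-securely extendable, with the remainder of $c^{\ast}$ as witness.

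For soundness I would maintain the invariant that every recursive call is made on an $f$-secure turnpath. The initial call satisfies this because $f$-securely extendable implies $f$-secure; recursive calls satisfy it because line~\ref{alg:ifnotsecure} explicitly skips any non-$f$-secure $p'$. When the print in line~\ref{alg:firstprintHypoth} fires, $p$ is therefore an $f$-secure turncycle, hence ordinary, and corresponds under the bijection from Section~\ref{sec:residual} to an $f$-secure proper cycle $c \in C(G)$ with $\comb(p)=c$ as a flow class. Proposition~\ref{pro:secureneighbors} then guarantees $f+c \in P_\Z$, so the printed value is a valid output.

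For non-emptiness, pick any witnessing $f$-secure turncycle $c^{\ast}$ extending $p$, which exists by hypothesis. If $p = c^{\ast}$ the base case fires and prints. Otherwise let $e^{\ast}$ be the turnedge of $c^{\ast}$ immediately following $p$; the heredity observation implies that $p\,e^{\ast}$ is $f$-secure and $f$-securely extendable, so $e^{\ast}$ passes both the security check in line~\ref{alg:ifnotsecure} and the oracle query $\Algo(f,p\,e^{\ast})$ in line~\ref{alg:searchpathHypoth}. The algorithm therefore recurses on $p\,e^{\ast}$ and prints at least one element by the inductive hypothesis. For completeness, given any $f$-secure turncycle $c$ containing $p$, apply the same argument but with $e^{\ast}$ chosen as the first turnedge of $c$ after $p$: the recursion is entered on the strictly longer prefix $p\,e^{\ast}$ of $c$, and by induction $f+\comb(c)$ is eventually printed when the base case is reached at $c$ itself.

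The main obstacle is the heredity lemma for $f$-security; once it is in hand, all three claims fall out of the same induction. The subtle technical point that must be treated with care is identifying the printed value $f+\comb(p)$, where $p$ is an ordinary $f$-secure turncycle, with a genuine neighbour of $f$ in $P_\Z$; this routes through the bijection between ordinary $f$-secure turncycles on $\res$ and $f$-secure proper cycles in $G$, combined with Proposition~\ref{pro:secureneighbors}.
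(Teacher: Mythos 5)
Your proof is correct and follows essentially the same route as the paper's: an induction along extensions of $p$, driven by the observation that a witnessing $f$\dash secure turncycle supplies a next turnedge that passes both the security check in line~\ref{alg:ifnotsecure} and the oracle query in line~\ref{alg:searchpathHypoth}. The paper compresses exactly the details you spell out (heredity of $f$\dash security under prefixes, the soundness invariant, and the identification of the printed flow via Proposition~\ref{pro:secureneighbors}) into ``the lemma follows now easily by induction on the number of turns in $G$ not used by $p$.''
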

\begin{proof}
Since the input $p$ is $f$\dash securely extendable, there exists at least one turnedge $e=(\pend p,z)$
such that $p'=pe$ is $f$\dash secure in line~\ref{alg:ifnotsecure} and $f$\dash securely extendable in line~\ref{alg:searchpathHypoth}.
Hence Algorithm~\ref{alg:findneighHypoth} prints at least one element or calls itself recursively.
The lemma follows now easily by induction on the number of turns in~$G$ not used by~$p$.
\end{proof}
We will see that it is crucial for the running time that for each call of
Algorithm~\ref{alg:findneighHypoth} we can ensure that an element is printed.
\begin{lemma}\label{lem:hypoiscorrectII}
Let $f \in P_\Z$.
On input $f \in P_\Z$, Algorithm~\ref{alg:findneighHypoth} prints out distinct elements.
The first $k$ elements are printed in time $\gO(k n^2 T(\Algo,n))$.
\end{lemma}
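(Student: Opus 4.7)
The plan is to view Algorithm~\ref{alg:findneighHypoth} as a depth-first search on a recursion tree and to combine two observations: (i) distinct leaves of this tree correspond to distinct printed flows, and (ii) thanks to the filtering by $\Algo$ in line~\ref{alg:searchpathHypoth}, no recursive call is wasted, so the tree explored up to the $k$\dash th output has $\gO(k\cdot n^2)$ nodes.

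For distinctness, I would argue as follows. At each internal node of the recursion tree the two children produced in line~\ref{alg:appendHypoth} append two different outgoing turnedges at $\pend{p}$, so two distinct leaves correspond to two different $f$\dash secure turncycles on $\resf f$. Since $f$\dash secure turncycles are ordinary, the canonical bijection with proper cycles on $G$ translates this into two distinct proper cycles $c_1\ne c_2$. Interpreted as $\{0,1\}$\dash valued flows these cycles use at most one of the two directed edges $e,-e$ incident to any undirected edge of $G$, so they coincide with their own reduced representatives; hence $\comb(c_1)\ne\comb(c_2)$ as flow classes, and the printed outputs $f+\comb(c_1)$ and $f+\comb(c_2)$ are different.

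For the running time I would proceed in two steps. First, the work at a single recursive call is $\gO(1)$ plus at most two $f$\dash security checks (each in $\poly(n)$) and at most two calls of $\Algo$, giving $\gO(T(\Algo,n))$ per node (since $\poly(n)\le T(\Algo,n)$ up to absorbing a polynomial factor). Second, by Lemma~\ref{lem:hypoiscorrectI} every recursive call is launched with an $f$\dash securely extendable turnpath as input and therefore eventually prints at least one element. Hence, at the moment the $k$\dash th output is produced, the explored part of the recursion tree has at most $k$ leaves. Its branching factor is at most two, and each root-to-leaf path visits each turnvertex of $\resf f$ at most once and so has length $\gO(n^2)$. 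Therefore the explored part of the tree has $\gO(k\cdot n^2)$ nodes, and multiplying by the per-node cost yields the bound $\gO(k\cdot n^2\cdot T(\Algo,n))$.

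The main obstacle is making the ``no wasted recursive call'' argument airtight: one must verify that the $\Algo$\dash filter in line~\ref{alg:searchpathHypoth} preserves, as an invariant along the recursion, the property that every subcall is itself $f$\dash securely extendable, so that Lemma~\ref{lem:hypoiscorrectI} may be applied to it to conclude that an output is eventually produced. Once this invariant is established, the rest is a standard tree-counting argument, but it is exactly this step which allows the polynomial $n$\dash factor in the depth to be charged globally rather than once per node.
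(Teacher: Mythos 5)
Your proposal is correct and follows essentially the same route as the paper: distinct leaves of the binary recursion tree yield distinct printed flows, each node costs $\gO(T(\Algo,n))$, and since every recursive call eventually prints something, the portion of the tree explored before the $k$-th output has $\gO(k\cdot n^2)$ nodes (the paper phrases this as a delay of $\gO(n^2 T(\Algo,n))$ between consecutive leaves). The ``main obstacle'' you flag is in fact immediate: line~\ref{alg:searchpathHypoth} recurses only when $\Algo(f,p')=\True$, which by the specification $(\dag)$ means $p'$ is $f$\dash securely extendable, so Lemma~\ref{lem:hypoiscorrectI} applies to every subcall.
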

\begin{proof}
Algorithm~\ref{alg:findneighHypoth} traverses a binary recursion tree of depth at most $|E(\resf f)|=\gO(n^2)$ with depth-first-search.
The time needed at each recursion tree node is $\gO(T(\Algo))$,
if the implementation is done in a reasonable manner:
Checking if a turnpath is a turncycle can be done in time $\gO(1)$
and testing $p'$ for $f$\dash security under the assumption that $p$ was $f$\dash secure
can also be in time $\gO(1)$.
Thus the time the algorithm spends between two leafs is at most $\gO(n^2 T(\Algo))$.
The lemma is proved by the fact that at each leaf a distinct element is printed.
\end{proof}
We can define an algorithm {\scshape NeighGen'} as required for Theorem~\ref{thm:neigh} (besides polynomial running time) as follows:

\begin{samepage}
\begin{algorithmic}[1]
\STATE Let $f \in P_\Z$ be an input.
\FOR{all turnedges $p \in E(\resf f)$}
 \STATE Compute $\Algo(f,p)$.
 \IF{$\Algo(f,p)=\True$}
  \STATE Call Algorithm~\ref{alg:findneighHypoth} on $(f,p,\Algo)$.
 \ENDIF
\ENDFOR
\end{algorithmic}
\end{samepage}
Lemma~\ref{lem:hypoiscorrectI} ensures that $\Gamma(f)$ is printed by {\scshape NeighGen'}.
We now analyze the running time of {\scshape NeighGen'}.

Since each element in $\Gamma(f)$ is printed at most once during each of the $\gO(n^2)$ calls of Algorithm~\ref{alg:findneighHypoth}
and each call of Algorithm~\ref{alg:findneighHypoth} prints pairwise distinct elements,
it follows that each element is printed by {\scshape NeighGen'} at most $\gO(n^2)$ times.
Hence, according to Lemma~\ref{lem:hypoiscorrectII}, the first $k$ elements
are printed in time $\gO(k \cdot n^4 \cdot T(\Algo,n))$.

Since the existence of $\Algo$ was hypothetical,
we have to bypass Problem~\ref{prob:secure}.
This is achieved in the next subsection.

\subsection{Bypassing the secure extension problem}
\label{subsec:bypassing}
We need a polynomial time algorithm that solves a problem similar to Problem~\ref{prob:secure}.
A first approach for this is the following (which will fail for several reasons explained below):
Instead of extending $p$ to an $f$\dash secure cycle, we compute a \emph{trivial extension~$q$ of~$p$},
which is a shortest turnpath $q$ in $\resf f$
starting at $\pend p$ and ending at $\pstart p$.
A turncycle~$c$ containing~$p$ can then be obtained as the concatenation $c=pq$.
But $c$ might not be secure and might not even be ordinary
and in the worst case we could have $f+\comb(pq) \notin P$.
It will be crucial in the following to find $q$ such that $f+\comb(pq) \in P$,
so in the upcoming examples we have a look at the difficulties that may arise
for a trivial extension~$q$ and how we can fix them.

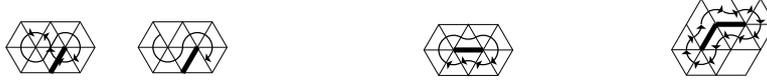
\begin{figure}[h]
\begin{center}
      \subfigure[Example~(a). Since the intersection of the curves of $p$ and $q$ decreases readability,
      in the right picture, only $p$ is drawn.]
        {
\hspace{1cm}
\scalebox{1.2}{
\begin{tikzpicture}\draw[rhrhombidraw] (0.0pt,0.0pt) -- (9.238pt,16.0002pt) -- (18.476pt,0.0pt) ;\draw[rhrhombidraw] (18.476pt,16.0002pt) -- (9.238pt,0.0pt) -- (0.0pt,16.0002pt) ;\draw[rhrhombidraw] (-4.619pt,8.0001pt) -- (23.095pt,8.0001pt) ;\draw[rhrhombithickside] (13.857pt,8.0001pt) -- (9.238pt,0.0pt);\draw[thin,-my] (16.1665pt,4.0001pt) arc (300:240:4.619pt);\draw[thin,-my] (11.5475pt,4.0001pt) arc (240:180:4.619pt);\draw[thin,-my] (9.238pt,8.0001pt) arc (0:60:4.619pt);\draw[thin,-my] (6.9285pt,12.0002pt) arc (60:120:4.619pt);\draw[thin] (2.3095pt,12.0002pt) arc (120:180:4.619pt) arc (180:240:4.619pt) arc (240:300:4.619pt);\draw[thin] (6.9285pt,4.0001pt) --  (11.5475pt,12.0002pt);;\draw[thin,-my] (11.5475pt,12.0002pt) arc (120:60:4.619pt) arc (60:0:4.619pt) arc (0:-60:4.619pt);\draw[rhrhombidraw] (-4.619pt,8.0001pt) -- (0.0pt,16.0002pt) -- (18.476pt,16.0002pt) -- (23.095pt,8.0001pt) -- (18.476pt,0.0pt) -- (0.0pt,0.0pt) -- cycle;\end{tikzpicture}
\quad
\begin{tikzpicture}\draw[rhrhombidraw] (0.0pt,0.0pt) -- (9.238pt,16.0002pt) -- (18.476pt,0.0pt) ;\draw[rhrhombidraw] (18.476pt,16.0002pt) -- (9.238pt,0.0pt) -- (0.0pt,16.0002pt) ;\draw[rhrhombidraw] (-4.619pt,8.0001pt) -- (23.095pt,8.0001pt) ;\draw[rhrhombithickside] (13.857pt,8.0001pt) -- (9.238pt,0.0pt);\draw[rhrhombidraw] (-4.619pt,8.0001pt) -- (0.0pt,16.0002pt) -- (18.476pt,16.0002pt) -- (23.095pt,8.0001pt) -- (18.476pt,0.0pt) -- (0.0pt,0.0pt) -- cycle;\draw[thin,-my] (2.3095pt,12.0002pt) arc (120:180:4.619pt) arc (180:240:4.619pt) arc (240:300:4.619pt) arc (300:360:4.619pt) arc (180:120:4.619pt) arc (120:60:4.619pt) arc (60:0:4.619pt) arc (0:-60:4.619pt);\end{tikzpicture}
}
\hspace{1cm}
\nopar\label{fig:examples:I}
}
\hspace{0.1cm}
      \subfigure[Example~(b).]
        {
\hspace{0.5cm}
\scalebox{1.2}{
\begin{tikzpicture}\draw[rhrhombidraw] (-4.619pt,8.0001pt) -- (23.095pt,8.0001pt) ;\draw[rhrhombidraw] (0.0pt,0.0pt) -- (9.238pt,16.0002pt) -- (18.476pt,0.0pt) ;\draw[rhrhombidraw] (0.0pt,16.0002pt) -- (9.238pt,0.0pt) -- (18.476pt,16.0002pt) ;\draw[rhrhombithickside] (4.619pt,8.0001pt) -- (13.857pt,8.0001pt);\draw[thin,-my] (2.3095pt,4.0001pt) arc (240:180:4.619pt) arc (180:120:4.619pt) arc (120:60:4.619pt) arc (240:300:4.619pt) arc (120:60:4.619pt) arc (60:0:4.619pt) arc (0:-60:4.619pt);\draw[thin,-my] (16.1665pt,4.0001pt) arc (300:240:4.619pt);\draw[thin,-my] (11.5475pt,4.0001pt) arc (60:120:4.619pt);\draw[thin,-my] (6.9285pt,4.0001pt) arc (300:240:4.619pt);\draw[rhrhombidraw] (-4.619pt,8.0001pt) -- (0.0pt,16.0002pt) -- (18.476pt,16.0002pt) -- (23.095pt,8.0001pt) -- (18.476pt,0.0pt) -- (0.0pt,0.0pt) -- cycle;\end{tikzpicture}
}
\hspace{0.5cm}
\nopar\label{fig:examples:II}
}
\hspace{0.1cm}
      \subfigure[Example~(c) with $p$ consisting of two turns at the upper left.]
        {
\hspace{0.5cm}
\scalebox{1.2}{
\begin{tikzpicture}\draw[rhrhombidraw] (4.619pt,8.0001pt) -- (23.095pt,8.0001pt) ;\draw[rhrhombidraw] (-4.619pt,-8.0001pt) -- (4.619pt,8.0001pt) ;\draw[thin,-my] (0.0pt,8.0001pt) arc (180:120:4.619pt) arc (120:60:4.619pt);\draw[thin,-my] (6.9285pt,12.0002pt) arc (240:300:4.619pt);\draw[thin,-my] (11.5475pt,12.0002pt) arc (120:60:4.619pt);\draw[thin,-my] (16.1665pt,12.0002pt) arc (60:0:4.619pt);\draw[thin,-my] (18.476pt,8.0001pt) arc (0:-60:4.619pt);\draw[thin,-my] (16.1665pt,4.0001pt) arc (300:240:4.619pt);\draw[thin,-my] (11.5475pt,4.0001pt) arc (60:120:4.619pt);\draw[thin,-my] (6.9285pt,4.0001pt) arc (120:180:4.619pt);\draw[thin,-my] (4.619pt,0.0pt) arc (0:-60:4.619pt);\draw[thin,-my] (2.3095pt,-4.0001pt) arc (300:240:4.619pt);\draw[thin,-my] (-2.3095pt,-4.0001pt) arc (240:180:4.619pt);\draw[thin,-my] (-4.619pt,0.0pt) arc (180:120:4.619pt);\draw[thin,-my] (-2.3095pt,4.0001pt) arc (300:360:4.619pt);\draw[rhrhombidraw] (0.0pt,16.0002pt) -- (9.238pt,0.0pt) -- (18.476pt,16.0002pt) ;\draw[rhrhombidraw] (-4.619pt,8.0001pt) -- (4.619pt,-8.0001pt) ;\draw[rhrhombidraw] (9.238pt,16.0002pt) -- (18.476pt,0.0pt) ;\draw[rhrhombidraw] (9.238pt,0.0pt) -- (-9.238pt,0.0pt) ;\draw[rhrhombithickside] (0.0pt,0.0pt) -- (4.619pt,8.0001pt);\draw[rhrhombithickside] (4.619pt,8.0001pt) -- (13.857pt,8.0001pt);\draw[rhrhombidraw] (4.619pt,-8.0001pt) -- (9.238pt,0.0pt) -- (18.476pt,0.0pt) -- (13.857pt,-8.0001pt) -- cycle;\draw[rhrhombidraw] (4.619pt,-8.0001pt) -- (-4.619pt,-8.0001pt) -- (-9.238pt,0.0pt) -- (0.0pt,16.0002pt) -- (18.476pt,16.0002pt) -- (23.095pt,8.0001pt) -- (18.476pt,0.0pt) ;\end{tikzpicture}
}
\hspace{0.5cm}
        \nopar\label{fig:examples:III}
}
    \nopar\label{fig:examples}
\caption{Illustration of the examples in Subsection~\ref{subsec:bypassing}. Rhombi where the diagonal is not drawn are $f$-flat.
All other rhombi are not $f$-flat.
The secure turnpath $p$ is drawn as a single long arrow, while the shortest turnpath $q$ from $\pend p$ to $\pstart p$ is drawn as several single turns.
Fat lines indicate diagonals of nearly $f$-flat rhombi $\varrho$ which satisfy $\s \varrho {pq} = -2$. Hence $f+\comb(pq) \notin P$.}
\end{center}
\end{figure}

\textbf{Example (a):}
Figure~\ref{fig:examples:I} shows a secure turnpath~$p$ and a trivial extension $q$
that uses no turnvertices of~$p$. Nevertheless, $f+\comb(pq) \notin P$.
In the light of Example~(a) we want to consider only those~$q$
where $q$ uses no turnvertices in those hive triangles where $p$ uses turnvertices.

\textbf{Example (b):}
Figure~\ref{fig:examples:II} shows a secure turnpath~$p$ and a trivial extension $q$
that uses no turnvertices of~$p$ and uses no turnvertices in hive triangles where $p$ uses turnvertices.
But still we have $f+\comb(pq) \notin P$.
Example~(b) gives the idea to consider only those~$q$ that use no turnvertices in nearly $f$\dash flat rhombi
where $p$ uses a negative slack contribution.

\textbf{Example (c):}
Figure~\ref{fig:examples:III} shows a secure turnpath~$p$ and a trivial extension $q$
that uses no turnvertices of~$p$ and uses no turnvertices in hive triangles where $p$ uses turnvertices.
Moreover, $q$ uses no turnvertices in nearly $f$\dash rhombi in which $p$ uses a negative slack contribution.
Nevertheless, $f+\comb(pq) \notin P$.
A situation as in Example~(c) is possible, because $p$ consists of only 2 turns.
We will see that 3 turns are enough to avoid these problems.

Considering more and more special subclasses of turnpaths $q$ in $\resf f$
leads to the forthcoming definition of the digraph $\resp f p$.
The turnpaths $q$ will be shortest turnpaths on $\resp f p$.
\begin{definition}[the digraph $\resp f p$]
\label{def:resfp}
Let $f \in P_\Z$ and $p$ be an $f$\dash secure turnpath on $\resf f$.
We define the digraph $\resp f p$ by further deleting from $\resf f$ a subset of its turnvertices:
Delete from $\resf f$ each turnvertex lying in a hive triangle in which $p$ uses turnvertices.
Moreover, for all \nff rhombi $\rhc$ in which $p$ uses $\rhpoulMl$ or $\rhpolrWl$,
delete all turnvertices of $\rhc$.
If we deleted $\pstart p$ or $\pend p$, add them back.
%
\end{definition}
We denote with $C'(\resp f p)$ the set of turnpaths in $\resp f p$ from $\pend p$ to $\pstart p$.
Hence for each $q \in C'(\resp f p)$ we have that $pq \in C(\resf f)$.
If for an $f$\dash secure turnpath $p$ in $\resf f$ we have that $C'(\resp f p) \neq \emptyset$,
then we call $p$ \emph{$f$\dash extendable}.
Note the following crucial fact: All $f$\dash securely extendable turnpaths are also $f$\dash extendable.
Unlike $f$\dash secure extendability, $f$\dash extendability can be handled efficiently:
We can check in time $\gO(n^2)$ whether an $f$\dash secure turnpath $p$ is $f$\dash extendable or not by
breadth-first-search on $\resp f p$.

Definition~\ref{def:resfp} is precisely what we want, which can be seen in
the following result, which is a variant of Theorem~4.8 in~\cite{bi:12}.
\begin{theorem}[Shortest Turncycle Theorem]
\label{thm:shoturncyc}
Let $p$ be an $f$\dash secure turnpath in~$\resf f$, consisting of at least 3 turns.
Let $q$ be a shortest turnpath in~$C'(\resp f {p})$.
Then $f+\comb(pq) \in P$.
\end{theorem}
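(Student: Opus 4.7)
The plan is to verify both defining conditions of the hive flow polytope $P$ for the flow $f+\comb(pq)$. For the border throughput conditions, note that turnpaths pass only through black vertices of hive triangles and therefore never visit the outer vertex of $G$, so $pq$ is a proper turncycle; the induced flow $\comb(pq)$ has zero net throughput on every border edge, and hence $f+\comb(pq)$ inherits the border throughputs of $f$. So the real content is to prove $\s \varrho {f+\comb(pq)} \geq 0$ for every rhombus $\varrho$, and by linearity of the slack this is equivalent to $\s \varrho {pq} \geq -\s \varrho f$, which I would evaluate via Claim~\ref{cla:slackforturncycles} as a sum of contributions coming from turnedges and residual turnvertices of $pq$ inside $\varrho$.

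I would then case-split on $\s \varrho f$. If $\s \varrho f = 0$, Definition~\ref{def:resf} removes all negative slack-contributions of $\varrho$ from $\resf f$; since both $p$ and $q$ lie in $\resf f$ (the latter because $\resp f p \subseteq \resf f$ by construction), every term in the slack sum is non-negative, so $\s \varrho {pq} \geq 0$. If $\s \varrho f \geq 2$, the crude estimate $\s \varrho {pq} \geq -2$ is enough: $p$ is ordinary and $q$ avoids every hive triangle visited by $p$, so at most one negative contribution can be accumulated in each of the two hive triangles constituting $\varrho$.

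The main case is $\s \varrho f = 1$, i.e.\ $\varrho$ nearly $f$-flat, where I must rule out $\s \varrho {pq} \leq -2$. The $f$-security of $p$ forbids $p$ from using both negative turnvertices $\rhpoulMl$ and $\rhpolrWl$ at the acute angles of $\varrho$. Moreover, by Definition~\ref{def:resfp}, whenever $p$ uses one of these negative turnvertices in $\varrho$, every turnvertex of $\varrho$ is deleted from $\resp f p$, so $q$ contributes nothing to $\varrho$ in that situation. In the remaining sub-cases $p$'s negative contribution comes only from a negative turnedge ($\rhpoulMrr$ or $\rhpolrWrr$), and I would argue by minimality of $q$: if $q$'s traversal of $\varrho$ pushed the total slack contribution below $-1$, one could reroute the offending portion of $q$ locally around $\varrho$ (through the neighbouring hexagon), producing a strictly shorter $q' \in C'(\resp f p)$ and contradicting the shortness of $q$.

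The main obstacle is executing this local rerouting in every configuration, and this is where the hypothesis that $p$ consists of at least 3 turns is essential. Example~(c) of Subsection~\ref{subsec:bypassing} shows that with only 2 turns, the endpoints $\pstart p$ and $\pend p$ of $p$ can lie so close together that $q$ is forced into a topologically rigid route from which no shorter detour around a problematic $\varrho$ exists. The third turn creates enough combinatorial room between the outgoing and incoming portions of $p$ to allow a shorter $q'$ that still avoids the turnvertices deleted by Definition~\ref{def:resfp}. Verifying this rigorously requires a careful case analysis on how $q$ enters and leaves $\varrho$ and on the deletion pattern around $\varrho$ induced by $p$; this is essentially the rerouting analysis carried out for Theorem~4.8 of~\cite{bi:12}, which I would adapt here after accounting for the enlarged deletions in $\resp f p$.
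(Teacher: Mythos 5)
Your overall reduction --- check the border throughputs, then verify $\s \varrho {pq} \geq -\s\varrho f$ rhombus by rhombus, letting the deletions of Definition~\ref{def:resfp} and the minimality of $q$ do the work, and defer the hard case analysis to an adaptation of Theorem~4.8 of~\cite{bi:12} --- is in the right spirit, and you correctly locate where the $3$\dash turn hypothesis enters (Example~(c) of Subsection~\ref{subsec:bypassing}). But two of your intermediate steps do not hold as stated. First, for $\s\varrho f\geq 2$ you claim $\s\varrho{pq}\geq -2$ because at most one negative contribution can accumulate in each of the two hive triangles of $\varrho$; this tacitly presumes $pq$ is ordinary, which is exactly what fails in general (a shortest $q$ may use two turnvertices in one hive triangle, cf.\ Remark~\ref{rem:lastpart}), and the slack of a turncycle can a priori be as low as $-4$. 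Excluding $\s\varrho{pq}\leq -3$ genuinely requires the structure theory of shortest turnpaths, namely Proposition~\ref{pro:noreverse} and, crucially, Proposition~\ref{pro:special}: a slack of $-3$ forces throughput $2$ through sides of $\varrho$, hence overlapping special rhombi, which is forbidden. Second, in the nearly $f$\dash flat case your plan is to reroute $q$ \emph{locally} around an offending rhombus to obtain a strictly shorter $q'\in C'(\resp f p)$. Such a local detour need not exist: the adjacent rhombi may themselves be $f$\dash flat, so the detour's turnvertices are deleted from $\resf f$, and a detour that does exist need not be shorter. This is precisely why neither this paper nor \cite{bi:12} argues locally at a single rhombus.

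For comparison, the paper's proof sets $\varepsilon := \max\{t \mid f+t\,\comb(pq)\in P\}$, assumes $\varepsilon<1$, calls a rhombus \emph{critical} if it is not $f$\dash flat but $g$\dash flat for $g=f+\varepsilon\comb(pq)$ (so $\s\varrho{pq}\leq -2$ there, Claim~\ref{cla:verynegslack}), proves via Proposition~\ref{pro:special} and the hexagon equality that a $p$ with at least $3$ turns uses no turnvertex in any critical rhombus (Claim~\ref{cla:pnotcritical}), and then concludes with the chain argument of \cite[Sec.~7.2]{bi:12}: the obstruction propagates to adjacent rhombi, and a rerouting is only available at the two ends of the resulting chain. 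So while you and the paper ultimately lean on the same external case analysis, your reduction to it is not yet sound: the counting argument for $\s\varrho f\geq 2$ must be replaced by the special\dash rhombus argument, and the local rerouting for $\s\varrho f=1$ by a propagation argument.
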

The proof is very similar to the one of Theorem~4.8 in \cite{bi:12}
and in Section~\ref{sec:shoturncyc} we outline the main changes that need to be made when adapting it.

Consider now Algorithm~\ref{alg:findneigh}, which is a refinement of Algorithm~\ref{alg:findneighHypoth}.
\begin{algorithm}
\caption{\algocaptionwithoutparam{FindNeigh}}
\nopar\label{alg:findneigh}
\begin{algorithmic}[1]
\REQUIRE $f \in P_\Z$; an $f$\dash extendable turnpath $p$ in $\resf f$ consisting of at least 3 turns
\ENSURE (1) Prints at least all integral flows $f+c \in P_\Z$, where $c \in C(G)$ is a cycle that contains~$p$,
but may print other elements of $P_\Z$ as well. (2) Prints at least one element.
\IF{$\pstart p = \pend p$}
\STATE $\print{f+\comb(p)}$ and \Return.\nopar\label{alg:firstprint}
\ENDIF
\STATE \textbf{local} \texttt{foundpath} $\leftarrow$ \False.
\FOR{\textbf{both} turnedges $e:=(\pend p,z) \in E(\resp f p)$}
\STATE Concatenate $p' \leftarrow pe$.
\IF{$p'$ is $f$\dash extendable}
 \STATE Recursively call {\scshape FindNeigh}$(f, p')$.
 \STATE \texttt{foundpath} $\leftarrow$ \True.
\ENDIF
\ENDFOR
\IF{\textbf{not} \texttt{foundpath}\nopar\label{alg:notfoundpath}}
 \STATE $\print{f+\comb(p q)}$ with a shortest $q \in C'(\resp f p)$ and \Return.\nopar\label{alg:secondprint}
\ENDIF
\end{algorithmic}
\end{algorithm}

We will see in Remark~\ref{rem:lastpart} in which situations the condition of line~\ref{alg:notfoundpath} is satisfied.
\begin{lemma}
\label{lem:findneighcorrect}
Algorithm~\ref{alg:findneigh} works according to its output specification.
\end{lemma}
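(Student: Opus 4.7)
The plan is to establish both parts of the output specification by induction on the number of turnvertices of $V(\resf f) \setminus V(p)$, equivalently on the recursion depth of Algorithm~\ref{alg:findneigh}.

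For property (2), the base case $\pstart p = \pend p$ makes $p$ a turncycle; since $p$ is $f$-secure (built into the definition of $f$-extendability) and hence ordinary, $p$ corresponds to an $f$-secure proper cycle of~$G$, and Proposition~\ref{pro:secureneighbors} places $f + \comb(p) \in P_\Z$, which line~\ref{alg:firstprint} prints. Otherwise, if some extension $p' = pe$ with $e \in E(\resp f p)$ is $f$-extendable, the recursive call on $p'$ prints at least one element by the inductive hypothesis. If no such extension is $f$-extendable, line~\ref{alg:secondprint} executes: by $f$-extendability of the input $p$, $C'(\resp f p)$ is nonempty, so a shortest $q$ exists, and because $p$ consists of at least three turns the Shortest Turncycle Theorem~\ref{thm:shoturncyc} delivers $f + \comb(pq) \in P$, hence in $P_\Z$ by integrality of $f$ and $\comb(pq)$.

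For property (1), let $c \in C(G)$ satisfy $f + c \in P_\Z$ and contain $p$. Proposition~\ref{pro:secureneighbors} shows $c$ is $f$-secure, and Lemma~\ref{lem:fhivepresiffnoneg} then places the ordinary turncycle of $c$ on $\resf f$; write it as $pq$. The crux is to argue $q \in C'(\resp f p)$. The first deletion condition of Definition~\ref{def:resfp} (turnvertices in hive triangles already used by $p$) is immediate from ordinariness of the turncycle $pq$. For the second deletion condition --- \nff rhombi $\rhc$ where $p$ uses $\rhpoulMl$ or $\rhpolrWl$ --- I would use Claim~\ref{cla:slackforturncycles} to compute $\s \rhc c$, combine with the hive constraint $\s \rhc {f+c} \geq 0$ (which, since $\rhc$ has slack exactly $+1$, forces $\s \rhc c \geq -1$) and the $f$-security of $c$ (which forbids the complementary acute-angle turnvertex), and then rule out $q$ from using any turnvertex of $\rhc$ by a case analysis on the opposing hive triangle. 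Once $q \in C'(\resp f p)$ is established, a second induction traces the recursion along $q$'s turnedges: the first turnedge $e_1$ lies in $E(\resp f p)$, and $p_1 := p e_1$ is $f$-extendable with the tail of $q$ witnessing this (monotonicity of the deletion sets as $p$ grows ensures the tail remains in $C'(\resp f {p_1})$), so the recursive call prints $f + c$ by induction.

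The main obstacle is the case analysis showing $q$ avoids \nff rhombi in which $p$ uses $\rhpoulMl$ or $\rhpolrWl$. After excluding $\rhpolrWl$ (respectively $\rhpoulMl$) via $f$-security, each of the remaining candidate turnvertices of the opposing hive triangle must still be excluded. This will likely require combining ordinariness of $pq$ (which constrains which in-$\rhc$ turnedges can accompany $q$'s turnvertex without forcing a second turn in the triangle already used by $p$) with the delicate accounting of Claim~\ref{cla:slackforturncycles} distinguishing contributions from turnedges versus remaining turnvertices. Making this step watertight is the heart of the proof, and may necessitate invoking $f$-hive-preservation inequalities at neighboring \nff rhombi rather than only at $\rhc$ itself.
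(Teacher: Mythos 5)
Your treatment of output requirement (2) is correct and essentially identical to the paper's: the three cases (the turnpath has closed into a turncycle, at least one recursive call succeeds, or the fallback in line~\ref{alg:secondprint} fires) are exhaustive, and the printed flows lie in $P_\Z$ by Proposition~\ref{pro:secureneighbors} and by the Shortest Turncycle Theorem~\ref{thm:shoturncyc}, respectively. For requirement (1), however, you take a different route from the paper, and that route has a genuine gap --- one you flag yourself. The paper never argues that the recursion can be traced along the specific completion $q$ of the target turncycle $pq$. Instead it splits on whether $p$ is $f$\dash securely extendable: if not, requirement (1) is vacuous by Proposition~\ref{pro:secureneighbors}; if so, it observes that the recursion tree of the black-box Algorithm~\ref{alg:findneighHypoth}, whose completeness is already Lemma~\ref{lem:hypoiscorrectI}, is contained in that of Algorithm~\ref{alg:findneigh}, resting on the fact recorded after Definition~\ref{def:resfp} that $f$\dash secure extendability implies $f$\dash extendability.

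The gap is the step you yourself identify as the heart of the argument: that $q$ avoids every turnvertex of a \nff rhombus $\rhc$ in which $p$ uses $\rhpoulMl$ or $\rhpolrWl$. This cannot be obtained by the local accounting you propose, because the configuration you need to exclude satisfies all the constraints you plan to invoke. Concretely, if $p$ uses $\rhpoulMl$, then $q$ may use the positive acute-angle turnvertex $\rhpollWr$ in the opposite hive triangle of $\rhc$: this is consistent with ordinariness (one turnvertex per hive triangle), with $f$\dash security (only the pair $\rhpoulMl$, $\rhpolrWl$ is forbidden), and with every rhombus inequality, since by Claim~\ref{cla:slackforturncycles} these two lone turnvertices contribute $-1$ and $+1$, so $\s \rhc {pq} = 0$ and $\s \rhc {f+\comb(pq)} = 1 \geq 0$. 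Yet $\rhpollWr$ is a turnvertex of $\rhc$ and is therefore deleted in $\resp f p$ by Definition~\ref{def:resfp}, so this $q$ does not lie in $C'(\resp f p)$ and your recursion trace breaks. Your monotonicity remark in the second induction is also backwards: the deletion sets grow as the prefix grows, so membership of the tail of $q$ in $C'(\resp f p)$ does not propagate to $C'(\resp f {p_1})$ for free; the same problem recurs at every prefix. To close the argument you should follow the paper and reduce to Lemma~\ref{lem:hypoiscorrectI} together with the implication from $f$\dash secure extendability to $f$\dash extendability, rather than to survival of the particular turnpath $q$.
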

\begin{proof}
Recall that by definition, all $f$\dash extendable turnpaths are $f$\dash secure.
Hence in line~\ref{alg:firstprint}, only elements of $P_\Z$ are printed.
The flows printed in line~\ref{alg:secondprint} are elements of $P_\Z$ because of the Shortest Turncycle Theorem~\ref{thm:shoturncyc}.

If $p$ is $f$\dash securely extendable, then the binary recursion tree of Algorithm~\ref{alg:findneighHypoth}
is a subtree of the binary recursion tree of Algorithm~\ref{alg:findneigh},
because $f$\dash secure extendability implies $f$\dash extendability.
Hence in this case, Algorithm~\ref{alg:findneigh} meets the output specification requirement (1).

If $p$ is not $f$\dash securely extendable, then there exists no
$f$\dash secure turncycle~$c$ containing $p$ such that $f+\comb(c) \in P_\Z$,
see Proposition~\ref{pro:secureneighbors}.
Thus in this case, Algorithm~\ref{alg:findneigh} trivially meets the output specification requirement (1).

The output specification requirement (2) is satisfied because exactly one of the following three cases occurs:
(a) Algorithm~\ref{alg:findneigh} prints an element in line~\ref{alg:firstprint} and returns
or (b) Algorithm~\ref{alg:findneigh} calls itself recursively
or (c) Algorithm~\ref{alg:findneigh} prints an element in line~\ref{alg:secondprint} and returns.
\end{proof}
\begin{remark}
\label{rem:lastpart}
If during a run of Algorithm~\ref{alg:findneigh} we have \texttt{foundpath}$=$\False in line~\ref{alg:notfoundpath},
then we have an $f$\dash extendable turnpath $p$ such that
its concatenation $p' \leftarrow pe$ with any further turnedge $e$ results in $p'$ being not $f$\dash extendable.
For example, this can happen if $p$ ends with $\rhpollWl$ and $q \in C'(\resp f p)$ continues with $\rhpcMrl$,
but $q$ also uses $\rhpoulMlr$.
Then $q$ uses two turnvertices in
\begin{tikzpictured}\draw[rhrhombidraw] (0.0pt,0.0pt) -- (-4.619pt,8.0001pt) -- (0.0pt,16.0002pt) -- (4.619pt,8.0001pt) -- cycle;\fill[rhrhombifill] (-4.619pt,8.0001pt) -- (4.619pt,8.0001pt) -- (0.0pt,16.0002pt) -- cycle;\end{tikzpictured}%
.
If $q$ is the only element in $C'(\resp f p)$, then adding a turnedge to $p$ destroys $f$\dash extendability:
A turnpath $q' \in C'(\resp f {p'})$ would mean that there exists a concatenated turncycle $p'q'$ that uses
only a single turnvertex in the hive triangle
\begin{tikzpictured}\draw[rhrhombidraw] (0.0pt,0.0pt) -- (-4.619pt,8.0001pt) -- (0.0pt,16.0002pt) -- (4.619pt,8.0001pt) -- cycle;\fill[rhrhombifill] (-4.619pt,8.0001pt) -- (4.619pt,8.0001pt) -- (0.0pt,16.0002pt) -- cycle;\end{tikzpictured}%
, a contradiction to the uniqueness of~$q$.
In this situation, $f+\comb(pq)$ is printed in line~\ref{alg:secondprint}
and since $pq$ is not ordinary, this means that an element of $P_\Z$ is printed
that does lie in $\Gamma (f)$.
\end{remark}
\begin{lemma}
\label{lem:time}
Let $f \in P_\Z$.
On input $f \in P_\Z$, Algorithm~\ref{alg:findneigh} prints out distinct elements.
The first $k$ elements are printed in time $\gO(k \cdot n^4)$.
\end{lemma}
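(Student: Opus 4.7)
The plan is to mimic the analysis of Lemma~\ref{lem:hypoiscorrectII}, replacing each call of the hypothetical oracle $\Algo$ by a concrete breadth-first-search on $\resp f {\cdot}$.

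First I would describe the shape of the search. Algorithm~\ref{alg:findneigh} is a depth-first traversal of a binary recursion tree whose internal nodes are labelled by $f$-extendable turnpaths; each edge of the tree corresponds to appending one turnedge. Appending $e = (\pend p, z) \in E(\resp f p)$ always places the new turnvertex $z$ in a hive triangle not previously visited by $p$, since this is exactly the exclusion rule defining $\resp f p$ (Definition~\ref{def:resfp}). Consequently the depth of the recursion tree is bounded by the number of hive triangles of $\Delta$, which is $\gO(n^2)$.

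Next I would bound the per-node work. At each recursion node the algorithm (i) tests in $\gO(1)$ whether $p$ is already closed; (ii) for each of the at most two candidate extensions $e$, constructs $\resp f {pe}$ incrementally from $\resp f p$ and runs a single breadth-first-search in $\gO(n^2)$ time to decide $f$-extendability; (iii) in the fallback branch of line~\ref{alg:notfoundpath}, executes one further BFS in $\gO(n^2)$ time to extract a shortest $q \in C'(\resp f p)$. So each node costs $\gO(n^2)$.

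Combining these observations, after $k$ prints the DFS has visited the first $k$ leaves together with all their root-to-leaf ancestor chains; since each such chain has length $\gO(n^2)$, this amounts to at most $\gO(k \cdot n^2)$ nodes in total, and each is processed in $\gO(n^2)$ time, yielding the claimed $\gO(k \cdot n^4)$ bound.

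The main obstacle is distinctness. Two distinct leaves share a recursion prefix ending at some node labelled $p^*$ and then diverge along turnedges $e_1 \neq e_2$ both starting at $\pend {p^*}$. On each branch the hive triangle of $\pend {p^*}$ is excluded from $\resp f {\cdot}$ at every descendant of the divergence, which prevents any subsequent extension or shortest turnpath $q$ from reintroducing the turnedge taken on the opposite branch. Hence the two printed turncycles $c_1, c_2$ differ in which of $e_1, e_2$ they use. To conclude that $f + \comb(c_1) \neq f + \comb(c_2)$ I would trace this through Definition~\ref{def:newthroughput}: the throughputs of $\comb(c_i)$ through the two edges of $\Delta$ bordering the hive triangle of $\pend {p^*}$ are determined locally by the turnvertices of $c_i$ inside that triangle, which for $c_1$ and $c_2$ differ exactly at the turn adjacent to $e_i$, so the two flows disagree on at least one edge of $\Delta$.
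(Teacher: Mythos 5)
Your proof follows the paper's argument essentially verbatim: the paper likewise bounds the binary recursion tree's depth by $\gO(n^2)$, charges $\gO(n^2)$ per node for the breadth-first-search that decides $f$\dash extendability, and amortizes over the leaves using the fact --- which the paper explicitly flags as the crucial point guaranteed by line~\ref{alg:secondprint} --- that every recursive call leads to at least one print, so the visited nodes are exactly the ancestor chains of printing leaves. Your distinctness discussion is actually more detailed than the paper's (which only asserts, by analogy with Lemma~\ref{lem:hypoiscorrectII}, that each leaf prints a distinct element); the one place to be careful is the final local throughput comparison, since the printed turncycles need not be ordinary and the throughput through the exit side of the divergence triangle also receives contributions from the triangle on its far side, so one should invoke the deletion rule of Definition~\ref{def:resfp} to confine each branch to a single turnvertex in that triangle before concluding that the two flows differ.
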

\begin{proof}
We use the fact that $f$\dash extendability can be decided in time $\gO(n^2)$.
The rest of the proof is analogous to the proof of Lemma~\ref{lem:hypoiscorrectII}.
Note that it is crucial in this proof that for each recursive algorithm call we can ensure
that at least one element of $P_\Z$ is printed.
This is guaranteed by line~\ref{alg:secondprint}.
\end{proof}
\begin{remark}
If we delete line~\ref{alg:secondprint} from Algorithm~\ref{alg:findneigh},
then its execution gives a recursion tree where at some leafs no elements are printed.
Then it is not clear how much time is spent visiting elementless leafs
and we cannot prove Lemma~\ref{lem:time}.
\end{remark}

Analogously to the definition of {\scshape NeighGen'},
we can define the algorithm {\scshape NeighGen} as required for Theorem~\ref{thm:neigh} as follows:
Call Algorithm~\ref{alg:findneigh} several times with fixed $f \in P_\Z$,
but each time with a different secure turnpath $p \in E(\resf f)$
consisting of 3 turns such that $p$ is $f$\dash extendable.
We now prove the correctness and running time of {\scshape NeighGen}.

Let $\Gamm f$ be the set of flows printed by {\scshape NeighGen}.
Since $f$\dash secure extendability implies $f$\dash extendability,
each flow printed by {\scshape NeighGen'} is also printed by {\scshape NeighGen},
so $\Gamma(f) \subseteq \Gamm f$.
Lemma~\ref{lem:findneighcorrect} implies $\Gamm f \subseteq P_\Z$.

Since we have $\gO(n^2)$ calls of Algorithm~\ref{alg:findneigh},
we get a total running time of $\gO(kn^6)$.

This proves Theorem~\ref{thm:neigh} with one hole remaining: The proof of the Shortest Turncycle Theorem \ref{thm:shoturncyc}.

\section{The Shortest Turncycle Theorem}
\label{sec:shoturncyc}
In this section we discuss how to prove the generalization of~\cite[Theorem~4.8]{bi:12},
namely the Shortest Turncycle Theorem~\ref{thm:shoturncyc}.
At the same time, we use this section for proving Propositions~\ref{pro:noreverse} and~\ref{pro:special},
which will be needed in Section~\ref{sec:ktt} in a little bit more general form than they are found in~\cite{bi:12}.
To achieve this, we slightly generalize the term ``turnpath''.
\begin{definition}
Let $k \in E(\Delta)$ and let $\eta$ be one of the two directions in which $k$ can be crossed by turnedges.
Then $p:=(k,\eta)$ is called a \emph{turnpath of length~0}.
For turnpaths of length 0,
the symbol $C'(\resp f p)$ is defined as the set of all turnpaths in $\resf f$ that cross $k$ in direction~$\eta$.
By a \emph{generalized turnpath} we understand a turnpath or a turnpath of length~0.
Turnpaths of length~0 are defined to be secure.
\end{definition}

In the following, we state some facts about shortest turncycles, which are variants from propositions proved in~\cite[Sec.~7.1]{bi:12}.

Each turnvertex in $\res$ has a \emph{reverse turnvertex} in $R$ 
that points in the other direction, e.g., the reverse turnvertex of $\rhpcMl$ is $\rhpoulMr$.
\begin{proposition}
\label{pro:noreverse}
Let $f \in P_\Z$ and let $p$ be an $f$\dash secure generalized turnpath on $\resf f$.
A shortest turnpath in $C'(\resp f p)$ cannot use a turnvertex and its reverse.
\end{proposition}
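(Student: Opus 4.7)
The plan is a proof by contradiction of the shortcut type. Suppose $q \in C'(\resp f p)$ is a shortest turnpath, and yet $q$ uses both a turnvertex $v$ and its reverse $v'$. Since $v$ and $v'$ are reverses, they share the same black vertex $b$ and the same pair of incident white vertices $w_1, w_2$, with (say) $v = (w_1, b, w_2)$ and $v' = (w_2, b, w_1)$. Without loss of generality $v$ occurs before $v'$ in~$q$, so we may factor $q = q_a \cdot v \cdot q_b \cdot v' \cdot q_c$ where $q_a, q_b, q_c$ are (possibly empty) sub\dash turnpaths.

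The key idea is to construct a strictly shorter turnpath $q' \in C'(\resp f p)$ that still runs from $\pend p$ to $\pstart p$. Let $u_1$ denote the last turnvertex of $q_a$ and $u_4$ the first turnvertex of $q_c$. Because $u_1 \to v$ is a turnedge and $v' \to u_4$ is a turnedge, the turn $u_1$ must end at $w_1$ and the turn $u_4$ must start at $w_1$; moreover, since two consecutive turns in a turnpath correspond to a genuine path in~$G$, the black vertices of both $u_1$ and $u_4$ are different from $b$. Thus the ordered pair $(u_1, u_4)$ is concatenable at $w_1$ in~$G$, so provided $u_1$ and $u_4$ use distinct black vertices and the resulting turnedge has not been deleted from $\resp f p$, the shortcut $q' := q_a \cdot (u_1, u_4) \cdot q_c$ is a valid turnpath of $\resp f p$ that is strictly shorter than $q$ (it omits $v\, q_b\, v'$, which is at least two turnvertices long), contradicting minimality.

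What remains is to handle the exceptional situations, and this is the main obstacle. The cases I expect to need separate treatment are: (i) $q_a$ or $q_c$ is empty, so the turnpath starts or ends at $v$ or $v'$; here one uses that for a length\dash 0 generalized turnpath $p = (k, \eta)$ the endpoints are white vertices and one argues directly from the crossing of $k$, while for longer $p$ one inspects the fixed first and last turnedges dictated by $\pstart p$ and $\pend p$; (ii) $u_1$ and $u_4$ happen to lie in the same hive triangle (share a black vertex), so the straight shortcut is not a turnedge in~$\res$ at all; (iii) the candidate turnedge $(u_1,u_4)$ is one of the deleted turnedges in a flat rhombus of $\resf f$, or lies in a rhombus where a turnvertex has been removed in the further passage to $\resp f p$.

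For each of these exceptional cases I would reroute using the local honeycomb geometry: there are only finitely many configurations for the positions of $u_1, v, u_4$ around the white vertex $w_1$ and the black vertex $b$, and in each of them one can exhibit an alternative short re\dash connection by instead splicing on the other side of~$b$ (through $w_2$) or by detouring within the rhombus containing $b$, using the symmetry of the deletions imposed in Definitions~\ref{def:resf} and~\ref{def:resfp}. Since the analogous proposition~\cite[Prop.~7.1]{bi:12} is proved for ordinary turnpaths by exactly such a case analysis, the present statement requires only the additional observation that the generalization to length\dash $0$ generalized turnpaths (where the prescribed start/end is a crossing direction of an edge $k$ rather than a turnvertex) adds no new obstruction, because the freedom to choose any turnedge crossing~$k$ in direction~$\eta$ at the endpoints only makes the shortcutting easier.
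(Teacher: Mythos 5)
Your overall strategy---contradict minimality by shortcutting between an occurrence of a turnvertex $v$ and of its reverse $v'$---is the right one and is in the spirit of the cited proof of \cite[Prop.~7.1]{bi:12}, which the paper itself merely adapts. But there is a genuine gap: your ``generic case'' is empty. Write $v=(w_1,b,w_2)$, so $v'=(w_2,b,w_1)$. The white vertex $w_1$ sits on an edge $k$ of $\Delta$ and is therefore adjacent to at most two black vertices of hive triangles, namely $b$ and the black vertex $b^*$ of the hive triangle on the other side of $k$. Since $(u_1,v)$ and $(v',u_4)$ are turnedges, the black vertices of $u_1$ and of $u_4$ are adjacent to $w_1$ and distinct from $b$, hence both equal $b^*$. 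So $u_1$ and $u_4$ \emph{always} lie in the same hive triangle, their concatenation repeats $b^*$ and is never a path in $G$, and the spliced turnedge $(u_1,u_4)$ on which your construction rests never exists. What you list as exceptional case~(ii) is in fact the only case.

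Consequently the entire content of the proof lies in what you defer to ``local honeycomb geometry'': one must merge $u_1$ and $u_4$ into a single turn $\hat u$ in the triangle of $b^*$ running from $\pstart{u_1}$ to $\pend{u_4}$ (and when $\pend{u_4}=\pstart{u_1}$, i.e.\ $u_4$ is the reverse of $u_1$, no such turn exists and one must instead pass to the outer reverse pair, e.g.\ by choosing the reverse pair of maximal separation). One must then verify that $\hat u$ and the two new turnedges joining it to the predecessor of $u_1$ and the successor of $u_4$ survive the deletions of Definition~\ref{def:resf}: $\hat u$ can perfectly well be a deleted counterclockwise turnvertex at the acute angle of an $f$\dash flat rhombus, or one of the new turnedges a deleted turnedge, even though $u_1$, $u_4$ and the turnedges of $q$ are present. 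This is exactly where the paper's caveat that $\resp f p$ has only a subset of the turnvertices of $\resf f$ bites (though here the passage from $\resf f$ to $\resp f p$ is harmless, since it removes all turnvertices of a triangle or rhombus at once and so cannot remove $\hat u$ while keeping $u_1$ and $u_4$). One must also check that the shortened walk still has pairwise distinct turnvertices. Asserting that ``finitely many configurations'' can each be rerouted, without exhibiting the reroutings or the flatness arguments (hexagon equality) that make them legal, does not discharge the proof.
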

\begin{proof}
Similar to the proof of~\cite[Prop.~7.1]{bi:12}.
A little care must be taken, because $\resp f p$ has only a subset of the turnvertices of $\resf f$.
\end{proof}

In the following we fix $f\in P_\Z$, a secure generalized turnpath $p$, and a \emph{shortest} turnpath $q \in C'(\resp f p)$.
Proposition~\ref{pro:noreverse} directly implies that
the diagonal of each rhombus is crossed at most twice by $q$.
A rhombus~$\varrho$ is called {\em special} if the turnpath~$q \in C'(\resp f p)$ crosses its diagonal twice.
If the crossing is in the same direction, then 
$\varrho$ is called  \emph{confluent},  
otherwise, if the crossing is in opposite directions, 
$\varrho$ is called \emph{contrafluent}.
\begin{proposition}
\label{pro:special}
\begin{enumerate}
 \item \label{pro:special:edges}In a confluent rhombus, $q$ uses exactly the turnedges $\rhpoulMrl$ and $\rhpourMlr$ and no other turnvertex.
In a contrafluent rhombus, $q$ uses exactly the turnedges $\rhpoulMrl$ and $\rhpollWlr$ and no other turnvertex.
 \item \label{pro:special:nooverlap}Special rhombi do not overlap.
\end{enumerate}
\end{proposition}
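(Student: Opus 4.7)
The plan is to follow the proofs of the analogous propositions in Section~7.1 of \cite{bi:12}, adapting them to the digraph $\resp f p$ and to the slightly broader notion of generalized turnpaths introduced here. Both parts rest on two ingredients: the minimality of $q$, and Proposition~\ref{pro:noreverse}.

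For part~\ref{pro:special:edges}, I would first classify the four turnedges in $\p_0(\varrho)$ by the direction in which they cross the diagonal: $\rhpoulMrl$ and $\rhpourMlr$ cross in one direction, whereas $\rhpollWlr$ and $\rhpolrWrl$ cross in the opposite direction. In the confluent case both of $q$'s crossings go the same way, so after rotating $\varrho$ appropriately we may assume both are of the first type; since those are the only two turnedges available in that direction, and their turnvertex sets are disjoint, $q$ must contain exactly $\rhpoulMrl$ and $\rhpourMlr$. In the contrafluent case the two crossings go in opposite directions, leaving four candidate pairs; a direct inspection shows that two of them place a pair of reverse turnvertices at one of the two black vertices of $\varrho$ and are therefore excluded by Proposition~\ref{pro:noreverse}, and after a further rotation choice the remaining admissible pair is $\{\rhpoulMrl, \rhpollWlr\}$. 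To establish the ``no other turnvertex'' assertion, I would argue by contradiction: if $q$ used an additional turnvertex $v$ in $\varrho$, then since the diagonal crossings are already exhausted by the two prescribed turnedges, the portion of $q$ adjacent to $v$ must remain inside a single hive triangle of $\varrho$; one can then splice this portion out and reconnect $q$, producing a strictly shorter element of $C'(\resp f p)$, contradicting minimality.

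For part~\ref{pro:special:nooverlap}, suppose for contradiction that distinct special rhombi $\varrho$ and $\varrho'$ share a hive triangle $T$, and let $d, d'$ be their respective diagonals (two of the three sides of $T$). By part~\ref{pro:special:edges} applied to $\varrho$, at $T$'s black vertex $q$ uses exactly the two specific turnvertices dictated by the two $\varrho$-crossings; analogously for $\varrho'$. Running through the cases (each rhombus confluent or contrafluent, together with direction choices) I would check that in every case either the prescribed turnvertices of $\varrho$ and $\varrho'$ at $T$'s black vertex contain a reverse pair, contradicting Proposition~\ref{pro:noreverse}, or the prescribed turnvertices of $\varrho'$ at $T$'s black vertex include one that is not among the prescribed ones of $\varrho$, contradicting the ``no other turnvertex'' conclusion of part~\ref{pro:special:edges} applied to $\varrho$.

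The main obstacle is verifying that the shortening step in part~\ref{pro:special:edges} yields a turnpath that actually lies in $\resp f p$ rather than just in $\res$. Recall that $\resp f p$ deletes more turnvertices than $\resf f$, namely negative-contribution turnvertices in $f$\dash flat rhombi and \emph{all} turnvertices in hive triangles already visited by $p$. The splice reconnects $q$ through turnvertices inside the same hive triangle where the extra turnvertex $v$ lives, and since $v$ is used by $q$ this triangle is not visited by $p$ (otherwise $v$ would have been removed from $\resp f p$ already); an argument analogous to the one in \cite{bi:12} also rules out the problematic $f$\dash flat configuration for the alternative turnvertex needed in the splice. Hence the splice stays inside $\resp f p$ and the shortening produces the required contradiction.
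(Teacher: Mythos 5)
Your overall strategy --- adapt the arguments of \cite[Sec.~7.1]{bi:12} to the digraph $\resp f p$, using the minimality of $q$ together with Proposition~\ref{pro:noreverse} --- is exactly the route the paper takes (its proof is a bare pointer to that section), and both your sketch of part~\ref{pro:special:nooverlap} and your closing remark about why splices must be checked to stay inside $\resp f p$ (which has fewer turnvertices than $\resf f$) identify the right adaptation issues. However, part~\ref{pro:special:edges} has a genuine gap: you classify the diagonal crossings as if the only turnedges crossing the diagonal of $\rhc$ were the four neutral ones in $\p_0(\rhc)$. That is false. All eight turnedges contained in $\rhc$ cross its diagonal; besides the neutral ones, which pass straight through, the positive turnedges $\rhpourMll$, $\rhpollWll$ and the negative turnedges $\rhpoulMrr$, $\rhpolrWrr$ also cross it, wrapping around an obtuse corner. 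The positive ones are always present, and the negative ones are deleted only in $f$\dash flat rhombi. Hence your step ``those are the only two turnedges available in that direction'' fails, and the following vertex-disjoint configurations survive both your direction count and the reverse-turnvertex test of Proposition~\ref{pro:noreverse}: in the confluent case the pair $\{\rhpoulMrr,\rhpourMll\}$, and in the contrafluent case the pairs $\{\rhpoulMrr,\rhpolrWrr\}$ and $\{\rhpourMll,\rhpollWll\}$. None of these contains a turnvertex together with its reverse, so they must be excluded by a separate argument, and that exclusion is the substantive content of part~\ref{pro:special:edges}.

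The confluent pair $\{\rhpoulMrr,\rhpourMll\}$ can still be eliminated by a splice of the kind you describe: both crossings pass through the white vertex of the diagonal, so one can reconnect the first turn of one crossing to the second turn of the other via a neutral turnedge (which is never deleted and whose endpoints are already on $q$), strictly shortening $q$. But the two contrafluent pairs cannot be removed by deleting a subsegment of $q$: among the four turnvertices they use, the only turnedges are the two already traversed, so any shortcut must introduce a \emph{new} turnvertex (e.g.\ a turn around an acute corner of $\rhc$ or of a neighbouring rhombus), and such a turn may itself have been deleted from $\resf f$ (as a negative contribution of an adjacent $f$\dash flat rhombus) or from $\resp f p$. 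Ruling these cases out therefore requires the flatness case analysis of \cite[Sec.~7.1]{bi:12}, which your proposal silently assumes away by restricting attention to $\p_0(\rhc)$ from the outset. A similar caveat applies to your ``no other turnvertex'' step: an extra turn such as $\rhpoulMl$ connects two different sides of a hive triangle, so it cannot simply be spliced out; its removal again requires a rerouting through a turnvertex not already on $q$.
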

\begin{proof}
Analogous to the proofs in~\cite[Sec.~7.1]{bi:12}.
\end{proof}
Let $c:=pq \in C(\resf f)$.
We set 
\begin{equation*}
 \varepsilon := \max \{ t \in \R \mid f + t \comb(c) \in P\}, \ \ g:=f+\varepsilon \comb(c).
\end{equation*}
Then we have $g \in P$ and by Lemma~\ref{lem:direction} we have $\varepsilon > 0$.
For the proof of the Shortest Turncycle Theorem~\ref{thm:shoturncyc} 
it suffices to show that $\varepsilon \geq 1$, since then $f+\comb(c) \in P_\Z$.

If all rhombi are $f$\dash flat, then there are no turncycles in $\resf f$ (see~\cite[Prop.~4.12 and Fig.~6]{bi:12}),
in contradiction to the existence of $q \in C'(\resp f p)$.
So in the following we suppose that 
not all rhombi are $f$\dash flat.
We shall argue indirectly and assume that $\varepsilon<1$.

\begin{definition}\label{def:critical}
A rhombus is called \emph{critical} if it is not $f$\dash flat, but $g$\dash flat.
\end{definition}
The following claim is easy to see.
\begin{claim}
\label{cla:verynegslack}
 For critical rhombi $\varrho$ we have $\s \varrho c \leq -2$.
\end{claim}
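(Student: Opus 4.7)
I would handle this by a two-line computation combining linearity of slack with integrality. Since $f \mapsto \s \varrho f$ is a linear form on $\oF(G)$ and $g = f + \varepsilon\,\comb(c)$, we have
\[
\s \varrho g \;=\; \s \varrho f + \varepsilon \, \s \varrho c.
\]
By Definition~\ref{def:critical}, $\s \varrho g = 0$, and since $\varrho$ is not $f$\dash flat while $f \in P$ is a hive flow, $\s \varrho f > 0$. Because $f \in P_\Z$ has integer edge-throughputs, $\s \varrho f$ is a nonnegative integer, so in fact $\s \varrho f \geq 1$.

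Next I would solve the displayed identity for $\s \varrho c$, obtaining $\s \varrho c = -\s \varrho f / \varepsilon$. By Lemma~\ref{lem:direction} we have $\varepsilon > 0$, and by the indirect assumption $\varepsilon < 1$. Together with $\s \varrho f \geq 1$ this gives $\s \varrho c < -\s \varrho f \leq -1$.

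To finish, I would invoke Claim~\ref{cla:slackforturncycles}, which expresses $\s \varrho c$ as an integer sum of contributions in $\{-1,0,+1\}$; in particular $\s \varrho c \in \Z$. Hence the strict inequality $\s \varrho c < -1$ upgrades to $\s \varrho c \leq -2$. There is no real obstacle here; the only mild subtlety worth flagging is that turncycle slack a priori ranges in $\{-4,\dots,4\}$ rather than in the $\{-2,\dots,2\}$ of Claim~\ref{cla:smallabsoluteslack} for ordinary cycles, since multiple turnvertices of $c$ can sit inside one rhombus, but integrality (the only property used) is unaffected.
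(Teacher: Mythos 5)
Your argument is correct and is precisely the computation the paper has in mind (the paper omits the proof, stating only that the claim "is easy to see"): linearity of the slack form gives $0=\s \varrho g=\s \varrho f+\varepsilon\,\s \varrho c$, integrality of $f$ gives $\s \varrho f\geq 1$, the standing indirect assumption $0<\varepsilon<1$ gives $\s \varrho c<-1$, and integrality of $\s \varrho c$ (via Claim~\ref{cla:slackforturncycles}) upgrades this to $\s \varrho c\leq -2$. No gaps.
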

\begin{claim}
\label{cla:pnotcritical}
If $p$ consists of at least 3 turns, then
$p$ does not use turnvertices in critical rhombi.
\end{claim}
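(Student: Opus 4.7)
The plan is to argue by contradiction, following the template of the proof of Theorem~4.8 in~\cite{bi:12} and adapting it to the present generalized setup. Assume that $p$ uses a turnvertex $v$ in a critical rhombus $\varrho$, and let $T_v\subset\varrho$ denote the hive triangle containing $v$, while $T_q := \varrho\setminus T_v$ denotes the other one.

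First I would extract two structural consequences from the definition of $\resp f p$. By construction, every turnvertex of $T_v$ has been removed from $\resp f p$, so $q$ uses no turnvertex in $T_v$; since each of the eight turnedges that lie inside~$\varrho$ has one endpoint in $T_v$, it follows that $q$ uses no turnedge of $\varrho$ either. By Claim~\ref{cla:slackforturncycles}, the contribution $\s \varrho q$ is therefore just the signed count of turnvertices of $q$ in $T_q$, whose absolute value is at most $3$. Similarly, since $p$ is ordinary, the single turnvertex $v$ contributes $\s\varrho p \in\{-1,0,1\}$, with possibly one further contribution from a turnedge of $p$ entering or leaving $\varrho$ through the diagonal (namely a turnedge joining $v$ to a turnvertex in $T_q$, which $q$ cannot reuse).

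Next, combining these bounds with $\s\varrho c = \s\varrho p + \s\varrho q \leq -2$ from Claim~\ref{cla:verynegslack}, and invoking Proposition~\ref{pro:special} to forbid special rhombi overlapping~$\varrho$, I would enumerate the very few possible shapes of $q$ in $T_q$ compatible with this strong negative contribution. In each case the plan is to locally excise $q$'s visit to $\varrho$ and replace it by a shorter detour inside $\resp f p$, producing a turnpath $q' \in C'(\resp f p)$ with $|q'|<|q|$, contradicting the minimality of $q$. For interior $v$ on $p$, such a reroute is immediate from Proposition~\ref{pro:noreverse} and the standard shortcut arguments used in the proof of~\cite[Thm.~4.8]{bi:12}.

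The main obstacle will be the endpoint cases $v=\pstart p$ or $v=\pend p$, where the prescribed endpoint of $q$ is pinned down by the turnedge of $p$ incident to $v$, and a naive shortcut would either miss that endpoint or re-enter a deleted turnvertex. This is exactly where the hypothesis that $p$ consists of at least $3$ turns is used: the two neighbours of $v$ on $p$ lie outside $\varrho$, so they delete turnvertices of $\resp f p$ only in hive triangles disjoint from~$\varrho$, leaving enough room in $T_q$ to reroute $q$ around $\varrho$ while still terminating at the forced endpoint. Verifying in detail that this rerouting stays inside $\resp f p$ and strictly decreases the length is the technical heart of the argument, and it mirrors line by line the endgame of the proof of~\cite[Thm.~4.8]{bi:12}.
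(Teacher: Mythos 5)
Your setup is reasonable, and your first structural observation (the deletion rule of Definition~\ref{def:resfp} removes the turnvertices of the hive triangle $T_v$ containing $p$'s turnvertex, so a generic $q$ cannot use a turnedge of $\varrho$) is in the spirit of the paper's argument. But the engine you propose for deriving the contradiction is not the one that works, and the part you explicitly defer as ``the technical heart'' is where the claim is actually proved. The paper does not excise a visit of $q$ to $\varrho$ and produce a shorter $q'$; indeed $q$ need not enter $\varrho$ at all for $\varrho$ to become critical, so there may be nothing to excise, and criticality is not in general an obstruction that can be traded for length. Instead the proof is a case distinction on \emph{which} turnvertex of $\varrho$ the path $p$ uses. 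If $p$ crosses a side of $\varrho$ in the positive direction (e.g.\ uses $\rhpcMl$), the throughput bounds give $\s \varrho c \geq -1$ outright, contradicting Claim~\ref{cla:verynegslack}. If $p$ uses a negative turnvertex such as $\rhpoulMl$, then either $\varrho$ is nearly $f$\dash flat and the deletion rule kills every other turnvertex of $\varrho$ (so $\varrho$ cannot be critical), or $\s\varrho f>1$ forces $\s\varrho c\leq -3$ and hence two overlapping special rhombi, contradicting Proposition~\ref{pro:special}. The genuinely hard case is $p$ using $\rhpcMr$: there the contradiction comes from the hexagon equality (Claim~\ref{cla:BZ}) forcing flatness of adjacent rhombi, which in turn forces continuations of $p$ and of $c$ that collide with turnvertices deleted in $\resp f p$. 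None of this is a minimality-of-$q$ argument, and your sketch does not engage with any of it.

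Two further concrete problems. First, your quantitative bound is wrong: by Claim~\ref{cla:slackforturncycles}, once $q$ uses no turnedge of $\varrho$, only two of the six turnvertices of $T_q$ contribute to $\s\varrho q$ at all (one positively, one negatively), so the contribution of $q$'s turnvertices in $T_q$ has absolute value at most $1$, not $3$; mis-stating this changes which configurations you would have to enumerate. Second, your structural claims already fail exactly where you locate the difficulty: if $v=\pend p$ (or $\pstart p$), that turnvertex is added back to $\resp f p$, so $q$ \emph{can} begin with a turnedge of $\varrho$ leaving $v$, contrary to your assertion that $q$ uses no turnedge of $\varrho$. Relatedly, your account of the three-turn hypothesis (room to reroute $q$ around an endpoint) is not how it is used; it enters through the forced-continuation arguments and, as the paper notes, chiefly in the later analogues of Claims~7.13 and~7.16 of \cite{bi:12}.
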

\begin{proof}
Assume that $p$ uses turnvertices in a critical rhombus $\rhc$.
W.l.o.g. let $p$ use a turnvertex in the hive triangle
\begin{tikzpictured}\draw[rhrhombidraw] (0.0pt,0.0pt) -- (-4.619pt,8.0001pt) -- (0.0pt,16.0002pt) -- (4.619pt,8.0001pt) -- cycle;\fill[rhrhombifill] (-4.619pt,8.0001pt) -- (4.619pt,8.0001pt) -- (0.0pt,16.0002pt) -- cycle;\end{tikzpictured}%
.
If $p$ uses $\rhpcMl$, then $\rhaoulW(c)=1$.
Since $\rhaolrM(c) \in\{-2,-1,0,1,2\}$ we have $\s \rhc {c} \geq -1$.
This implies that $\rhc$ is not critical according to Claim~\ref{cla:verynegslack}.
Analogous arguments show that $p$ does not use $\rhpourMr$ or $\rhpourMl$.

If $p$ uses $\rhpoulMl$ and $\s \rhc f = 1$, then by construction of $\resp f p$, $c$ uses no further turnvertex in~$\rhc$
and hence $\rhc$ is not critical.
If $p$ uses $\rhpoulMl$ and $\s \rhc f > 1$, then $\s \rhc c \leq -3$.
This implies $\rhaolrW(c)=2$ and $\rhaollW(c)=2$, which results in overlapping special rhombi $\rhrholr$ and $\rhrholl$:
A contradiction to Proposition~\ref{pro:special}(\ref{pro:special:nooverlap}).

Assume that $p$ uses $\rhpcMr$.
If $\rhaollM(c)=2$, then $q$ uses $\rhpolrWlr$ and $\rhpcWrl$, in contradiction to $p$ using $\rhpcMr$.
Hence $\rhaollM(c)\leq 1$.
Since $\rhc$ is critical, it follows $\rhaollM(c) = 1$.
But since $p$ uses $\rhpcMr$, $q$ uses both $\rhpplrMlr$ and $\rhplrMrl$.
Therefore, $\rhrholr$ is $f$\dash flat.
The hexagon equality (Claim~\ref{cla:BZ}) implies that either
we have $\s \rhrhur f = 0$ and $\s \rhrhpr f = 1$
or we have $\s \rhrhur f = 1$ and $\s \rhrhpr f = 0$.
In the former case, $p$ continues with $\rhpcMrrl$.
This implies that all turnvertices in the hive triangle
\begin{tikzpictured}\draw[rhrhombidraw] (0.0pt,0.0pt) -- (-4.619pt,8.0001pt) -- (0.0pt,16.0002pt) -- (4.619pt,8.0001pt) -- cycle;\fill[rhrhombifill] (4.619pt,8.0001pt) -- (13.857pt,8.0001pt) -- (9.238pt,0.0pt) -- cycle;\end{tikzpictured}
\ are deleted in $\resp f p$,
in contradiction to $q$ using $\rhplrMrl$.
Therefore $\s \rhrhur f = 1$ and $\s \rhrhpr f = 0$.
But by construction of $\resf f$, $c$ uses
\begin{tikzpictured}\draw[rhrhombidraw] (0.0pt,0.0pt) -- (-4.619pt,8.0001pt) -- (0.0pt,16.0002pt) -- (4.619pt,8.0001pt) -- cycle;\draw[thin,-my] (11.5475pt,12.0002pt) arc (120:180:4.619pt) arc (0:-60:4.619pt) arc (300:240:4.619pt) arc (60:120:4.619pt);\end{tikzpictured}%
.
The turnpath $p$ can continue as $\rhpcMrl$ or as $\rhpcMrr$,
but both cases will yield a contradiction.
Assume $p$ continues as $\rhpcMrl$.
Then the hive triangle
\begin{tikzpictured}\draw[rhrhombidraw] (0.0pt,0.0pt) -- (-4.619pt,8.0001pt) -- (0.0pt,16.0002pt) -- (4.619pt,8.0001pt) -- cycle;\fill[rhrhombifill] (4.619pt,8.0001pt) -- (13.857pt,8.0001pt) -- (9.238pt,16.0002pt) -- cycle;\end{tikzpictured}
\ has no turnvertices in $\resp f p$,
in contradiction to $c$ using
\begin{tikzpictured}\draw[rhrhombidraw] (0.0pt,0.0pt) -- (-4.619pt,8.0001pt) -- (0.0pt,16.0002pt) -- (4.619pt,8.0001pt) -- cycle;\draw[thin,-my] (11.5475pt,12.0002pt) arc (120:180:4.619pt) arc (0:-60:4.619pt) arc (300:240:4.619pt) arc (60:120:4.619pt);\end{tikzpictured}%
.
So now assume that $p$ continues as $\rhpcMrr$.
Since $\rhrhpr$ is $f$\dash flat, $p$ continues as $\rhpcMrrr$,
also in contradiction to $c$ using
\begin{tikzpictured}\draw[rhrhombidraw] (0.0pt,0.0pt) -- (-4.619pt,8.0001pt) -- (0.0pt,16.0002pt) -- (4.619pt,8.0001pt) -- cycle;\draw[thin,-my] (11.5475pt,12.0002pt) arc (120:180:4.619pt) arc (0:-60:4.619pt) arc (300:240:4.619pt) arc (60:120:4.619pt);\end{tikzpictured}%
.
%

The proof that $p$ does not use $\rhpoulMr$ is analogous to above argument.
\end{proof}
With Claim~\ref{cla:pnotcritical} in mind,
the rest of the proof of the Shortest Turncycle Theorem~\ref{thm:shoturncyc} is analogous to \cite[Sec.~7.2]{bi:12}.
The requirement that $p$ must have at least 3 turns is needed
when proving the analogues of \cite[Claim~7.13]{bi:12} and \cite[Claim~7.16]{bi:12}
when deducing the contradiction to the choice of the first critical rhombus.

\section{Proof of the King-Tollu-Toumazet Conjecture}
\label{sec:ktt}
In this section we prove Theorem~\ref{thm:ktt},
more precisely, we prove the following equivalent geometric formulation.
\begin{theorem}
\label{thm:ktt:lineseg}
Let $\LRC \la \mu \nu = 2$ and let $f_1$ and $f_2$ be the two integral points of $P(\la,\mu,\nu)$.
Then $P(\la,\mu,\nu)$ is exactly the line segment between $f_1$ and $f_2$.
\end{theorem}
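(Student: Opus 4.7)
The plan is to prove Theorem~\ref{thm:ktt:lineseg} by establishing the following \emph{Tangent Claim}: the tangent cone to $P:=P(\la,\mu,\nu)$ at $f_1$ is the single ray $\R_{\geq 0}\cdot d$, where $d:=f_2-f_1$. Since $P$ is convex, $P-f_1$ is contained in this tangent cone, hence $P\subseteq f_1+\R_{\geq 0} d$. The symmetric statement at $f_2$ gives $P\subseteq f_2-\R_{\geq 0} d$, and together with $[f_1,f_2]\subseteq P$ (by convexity) this forces $P=[f_1,f_2]$.

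First I would observe that by the Connectedness Theorem~\ref{thm:connectedness}, $f_1$ and $f_2$ are adjacent in the LR graph, so Proposition~\ref{pro:secureneighbors} yields an $f_1$-secure proper cycle $c$ with $\comb(c)=d$. Moreover, the assumption $\LRC{\la}{\mu}{\nu}=2$ implies that \emph{every} $f_1$-secure proper cycle $c'$ satisfies $\comb(c')=d$, for otherwise $f_1+\comb(c')$ would be a third element of $P_\Z$ by Proposition~\ref{pro:secureneighbors}. The analogous statement holds at $f_2$ with $d$ replaced by $-d$.

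To prove the Tangent Claim it suffices to show that $\comb(c')\in \R_{\geq 0}\cdot d$ for every turncycle $c'$ on the residual digraph $\resf{f_1}$: the digraph $\resf{f_1}$ was constructed precisely to encode the $f_1$-hive-preserving behaviour (Lemma~\ref{lem:direction}), and every element of the tangent cone at $f_1$ can be expressed as a nonnegative combination of such $\comb(c')$ by a standard network-flow decomposition. For $f_1$-secure $c'$ the claim is immediate from the previous paragraph. The real content lies in the non-secure case, where either $c'$ is not ordinary (it visits more than one turnvertex in some hive triangle) or $c'$ uses both counterclockwise turnvertices $\rhpoulMl$ and $\rhpolrWl$ at the acute angles of some nearly $f_1$-flat rhombus. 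My plan here is a local surgery argument guided by the structural results of Section~\ref{sec:shoturncyc}: Proposition~\ref{pro:noreverse} (no reverse turnvertices) and Proposition~\ref{pro:special} (the rigid turnedge content of special rhombi and their pairwise non-overlapping) let us cut $c'$ at each offending rhombus or hive triangle and splice the pieces into $f_1$-secure proper cycles whose induced flows sum to $\comb(c')$. Each such piece then carries flow $d$ by the second paragraph, so $\comb(c')=k\cdot d$ with $k\in\Z_{\geq 0}$.

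The main obstacle I anticipate is exactly this local surgery in the non-secure case: one must check that the pieces produced by splicing are genuinely ordinary $f_1$-secure \emph{proper} cycles, that the cut-and-splice can be carried out globally without introducing new non-secure configurations elsewhere, and that the non-overlap property from Proposition~\ref{pro:special} is strong enough to keep the modifications independent. Once the Tangent Claim is established at both $f_1$ and $f_2$, the reduction of the first paragraph yields $P=[f_1,f_2]$, and hence Theorem~\ref{thm:ktt:lineseg} (and thereby Theorem~\ref{thm:ktt}, since then for every $M\in\N$ the integer points of $M\cdot P$ are exactly the $M+1$ equally spaced lattice points on $M[f_1,f_2]$).
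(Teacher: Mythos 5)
Your reduction via tangent cones is a legitimate and genuinely different route from the paper's: the paper passes to the stretched polytopes $MP$, applies the Connectedness Theorem~\ref{thm:connectedness} to their LR graphs, and must therefore control the $\xi$\dash hive preserving cycles at \emph{every} point $\xi$ of the stretched segment, including inner ones (this is the content of Proposition~\ref{pro:ktt:goal}), whereas your convexity argument $P-f_1\subseteq$ tangent cone would only require control at the two endpoints. If your Tangent Claim could be established as described, the rest of the outline would indeed close the proof, and would bypass a substantial part of Subsection~\ref{subsec:proofofkttgoal}.

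There are, however, two genuine gaps in how you propose to establish the Tangent Claim. First, the step from ``element of the tangent cone at $f_1$'' to ``nonnegative combination of $\comb(c')$ over turncycles $c'$ of $\resf{f_1}$'' is not a standard network-flow decomposition: the rhombus inequalities are not edge capacities, and what is needed is that every $f_1$\dash hive preserving flow can be \emph{routed} as a nonnegative circulation on the turn digraph $\resf{f_1}$ before one can decompose into turncycles. That routing is the substance behind \cite[Thm.~4.19]{bi:12}, quoted in this paper only in the weak existence form of Proposition~\ref{pro:rerouting}; it must be invoked explicitly, not assumed. Second, and more seriously, your surgery for non-secure turncycles leans on Propositions~\ref{pro:noreverse} and~\ref{pro:special}, but both apply only to \emph{shortest} turnpaths in $C'(\resp f p)$; an arbitrary turncycle arising from a decomposition can use a turnvertex together with its reverse and can self\dash intersect in patterns with no non-overlap property, so the cut-and-splice has nothing to anchor it. The paper circumvents exactly this by always working with minimal objects: Key Lemma~\ref{keylem:ktt:secure} --- which your outline never isolates, although it is the combinatorial heart of the argument (every \emph{ordinary} turncycle of $\resf{f_\iot}$ is $f_\iot$\dash secure, proved by building a chain of bad rhombi and rerouting at its two ends to contradict $\LRC\la\mu\nu=2$) --- handles the ordinary case; shortest turncycles crossing the pipe border, together with the rerouting $(\ddag)$, handle the self-intersecting case; and only the narrowly defined ``almost ordinary'' turncycles of Lemma~\ref{lem:ktt:moresecure} are ever treated beyond that. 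Note also that the resolution of the ``ordinary but not secure'' case in the paper is not a splicing of $c'$ into secure pieces summing to $\comb(c')$, but a minimality contradiction producing two shorter ordinary turncycles. Without an analogue of the Key Lemma and a minimality device of this kind, your non-secure case does not go through as stated.
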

\begin{claim}
Theorem~\ref{thm:ktt:lineseg} is equivalent to Theorem~\ref{thm:ktt}.
\end{claim}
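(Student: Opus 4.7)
The plan is to exploit the fact that $P := P(\la,\mu,\nu)$ scales linearly with the boundary data. Indeed, the rhombus inequalities $\s\varrho f \ge 0$ defining $P$ are homogeneous in $f$, while the border throughput conditions $\delta(k,f) = \bar\delta(k)$ depend linearly on $(\la,\mu,\nu)$ since $\bar\delta(k)$ is one of $\pm\la_i, \pm\mu_i, \pm\nu_i$. Consequently $P(M\la,M\mu,M\nu) = M \cdot P$, and by Proposition~\ref{pro:flowdescription} the desired equivalence reduces to the purely geometric statement
$$
|(M \cdot P)_\Z| = M+1 \text{ for all } M \in \N \quad \Longleftrightarrow \quad P = [f_1,f_2] \text{ with } P_\Z = \{f_1,f_2\}.
$$
I will analyze both directions via Ehrhart theory on the rational polytope $P$.

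For the direction $\ref{thm:ktt:lineseg} \Rightarrow \ref{thm:ktt}$, I would assume $P = [f_1,f_2]$ and set $v := f_2 - f_1 \in \oF(G)_\Z$. I would first argue that $v$ is primitive in the lattice $\oF(G)_\Z$: any decomposition $v = kw$ with $w \in \oF(G)_\Z$ and $k \ge 2$ would insert intermediate integral points $f_1+w,\ldots,f_1+(k-1)w$ into $[f_1,f_2]$, contradicting $|P_\Z|=2$. Since $M \cdot P = [Mf_1, Mf_1 + Mv]$ and $v$ is primitive, the integral points on this segment are exactly $Mf_1 + jv$ for $j = 0,1,\ldots,M$, which gives $M+1$ in total.

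For the reverse direction, I would assume $|(MP)_\Z| = M+1$ for every $M \ge 1$. Ehrhart's theorem presents $M \mapsto |(MP)_\Z|$ as a quasi-polynomial of degree $\dim P$, so the linear growth forces $\dim P = 1$; thus $P = [a,b]$ for possibly non-integral endpoints. With $v = f_2 - f_1$ primitive (same argument as above applied to the two integral points of $P$), I parameterize $a = f_1 + \alpha v$ and $b = f_1 + \beta v$; the containment $f_1,f_2 \in P$ yields $\alpha \le 0 \le 1 \le \beta$, and the integer-point count becomes $\lfloor M\beta \rfloor - \lceil M\alpha \rceil + 1 = M+1$. Dividing by $M$ and letting $M \to \infty$ gives $\beta - \alpha = 1$, which combined with the inequalities $\alpha \le 0$ and $\beta \ge 1$ forces $\alpha = 0$ and $\beta = 1$, so $P = [f_1,f_2]$. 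The only mildly delicate step is pinning down the endpoints of $P$ in this reverse direction, but the asymptotic floor/ceiling calculation handles it cleanly; everything else is a routine consequence of the linear scaling $P(M\la,M\mu,M\nu) = M \cdot P$ and the primitivity of $v$.
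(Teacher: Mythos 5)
Your proof is correct, and the forward direction (segment $\Rightarrow$ stretched count equals $M+1$) is essentially the paper's argument, except that you spell out the primitivity of $v=f_2-f_1$ in $\oF(G)_\Z$, which the paper dismisses as ``easy to see''. The converse is where you genuinely diverge. The paper argues elementarily: if $P$ strictly contains the segment $[f_1,f_2]$, then, being a rational polytope, it contains a \emph{rational} point $x$ off that segment; clearing denominators gives a single $M$ for which $Mx$ is an integral point of $MP$ not among the $M+1$ points $Mf_1+j(f_2-f_1)$, so $\LRC{M\la}{M\mu}{M\nu}\geq M+2$ for that $M$ --- no dimension count and no endpoint analysis are needed. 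Your route instead invokes Ehrhart's theorem for rational polytopes (the lattice-point enumerator is a quasi-polynomial of degree $\dim P$ with constant positive leading coefficient) to force $\dim P=1$, and then pins down the endpoints by the asymptotics of $\lfloor M\beta\rfloor-\lceil M\alpha\rceil$. This is valid, but it is a substantially heavier tool, and it carries the extra obligations of checking that $P$ is a bounded rational polytope and that $\oF(G)_\Z$ is a genuine lattice in $\oF(G)$ so that Ehrhart applies; the payoff is a slightly more systematic treatment that would generalize to higher-dimensional versions of the conjecture. Both arguments rest on the same scaling identity $P(M\la,M\mu,M\nu)=M\cdot P(\la,\mu,\nu)$, which you state explicitly and the paper uses implicitly.
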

\begin{proof}
Let $\LRC \la \mu \nu = 2$ and let $P = P(\la,\mu,\nu)$.
For every natural number $M$, the stretched polytope $MP$ contains
at least the $M+1$ integral points
\[\kappa_M := \{M f_1, \  (M-1)f_1 + f_2, \ (M-2)f_1 + 2 f_2, \ \ldots, \ f_1 + (M-1)f_2, \ M f_2\}\]
and hence $\LRC {M\la} {M\mu} {M\nu} \geq M+1$.

If $P$ contains a point besides the line segment between $f_1$ and $f_2$,
then $P$ contains a rational point $x$ besides this line segment.
But then there exists $M$ such that $Mx$ is integral and hence
$\LRC {M\la} {M\mu} {M\nu} > M+1$.

On the other hand,
if $P$ contains no other point besides the line segment,
then it is easy to see that these $M+1$ points are the only integral points in $MP$ and hence
$\LRC {M\la} {M\mu} {M\nu} = M+1$.
\end{proof}
From now on, fix partitions $\la$, $\mu$ and $\nu$ such that $\LRC \la \mu \nu = 2$.
Let $f_1$, $f_2$ be the two flows in $P(\la,\mu,\nu)_\Z$.
Let $c_1 := f_2 - f_1$ denote the $f_1$\dash secure cycle that connects the integral points in $P$,
and analogously define $c_2 := -c_1 = f_1-f_2$ the $f_2$\dash secure cycle running in the other direction.
W.l.o.g.\ $c_1$ runs in counterclockwise direction, otherwise we switch $f_1$ and $f_2$.

For $M \in \N$, the stretched polytope $MP$ contains at least the set of integral points
\[\kappa_M := \{Mf_1 + m c_1 \mid m \in \N, \ 0 \leq m \leq M\}.\]
To prove Theorem~\ref{thm:ktt:lineseg} it remains to show that for all $M \in \N_{\geq 2}$
these are the only integral points in $MP$.
According to the Connectedness Theorem~\ref{thm:connectedness},
this is equivalent to the statement
that for each integral flow $\xi \in \kappa_M$
the neighborhood $\Gamma(\xi)$ is contained in $\kappa_M$.
To prove this,
we fix some $M \in \N_{\geq 2}$.
Now we choose an arbitrary $\xi \in \kappa_M$,
which means choosing a natural number $0 \leq m \leq M$
such that $\xi = Mf_1 + mc_1$.
If $\xi \in \{Mf_1,Mf_2\}$, we say that $\xi$ is \emph{extremal},
otherwise $\xi$ is called \emph{inner}.
We are interested in the neighborhood $\Gamma(\xi)$.

If $\xi \neq f_2$, then
$\xi+c_1 = Mf_1 + (m+1)c_1 \in MP$,
which implies that the cycle $c_1$ is $\xi$\dash secure by Proposition~\ref{pro:secureneighbors}.
Analogously, $c_2$ is $\xi$\dash secure for $\xi \neq f_2$.
It follows that both $c_1$ and $c_2$ are $\xi$\dash secure for an inner~$\xi$.
It remains to show that for an inner $\xi$ the only
$\xi$\dash secure proper cycles are $c_1$ and $c_2$,
and for both $\iot \in \{1,2\}$
the only $f_\iot$\dash secure proper cycle is $c_\iot$.
In fact, we are going to show the following slightly stronger statement.
\begin{equation}
\tag{\textreferencemark}
\begin{minipage}{11cm}
For an inner $\xi$, the proper cycles $c_1$ and $c_2$ are the only $\xi$\dash hive preserving proper cycles.
For both $\iot \in \{1,2\}$ the only $f_\iot$\dash hive preserving proper cycle is $c_\iot$.
\end{minipage}
\end{equation}
Note that a proper cycle $c$ is $\xi$\dash hive preserving iff
$c$ is $\frac \xi M$\dash hive preserving.
Hence for proving (\textreferencemark) we can assume w.l.o.g.\
that $\xi = xf_1+(1-x)f_2$ with $0 \leq x \leq 1$ is a convex combination of $f_1$ and~$f_2$.
To classify all $\xi$\dash hive preserving proper cycles we use the following important proposition
which we prove in the next subsection.
\begin{proposition}
\label{pro:ktt:goal}
Let $\LRC \la \mu \nu = 2$ and let $f_1$ and $f_2$ be the two integral hive flows in $P(\la,\mu,\nu)$.
%
For each convex combination $\xi = x f_1 + (1-x) f_2$, $0 \leq x \leq 1$ we have
that each $\xi$\dash hive preserving proper cycle is
\emph{either} $f_1$\dash hive preserving \emph{or} $f_2$\dash hive preserving.
\end{proposition}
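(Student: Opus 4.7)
The plan is to argue by contradiction, exploiting $\LRC \la \mu \nu = 2$ to rule out the forbidden cycle pattern. Without loss of generality assume $0 < x < 1$; for $x \in \{0,1\}$ we have $\xi \in \{f_1, f_2\}$ and the claim is trivial. For inner $\xi$, a rhombus $\varrho$ is $\xi$-flat iff it is simultaneously $f_1$-flat and $f_2$-flat, since $\s \varrho \xi = x \s \varrho {f_1} + (1-x) \s \varrho {f_2}$ and both summands are nonnegative.

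Suppose for contradiction that $c$ is a proper $\xi$-hive preserving cycle that is neither $f_1$- nor $f_2$-hive preserving. By Lemma~\ref{lem:fhivepresiffnoneg}, there exist rhombi $\varrho_1, \varrho_2$ at which $c$ uses negative slack contributions, where $\varrho_1$ is $f_1$-flat and $\varrho_2$ is $f_2$-flat. Since $c$ is $\xi$-hive preserving, $\varrho_1$ is not $f_2$-flat and $\varrho_2$ is not $f_1$-flat, so $\s {\varrho_1} {f_2} \geq 1$ and $\s {\varrho_2} {f_1} \geq 1$. Using $c_1 = f_2 - f_1$ and Claim~\ref{cla:smallabsoluteslack}, one obtains $\s {\varrho_1} {c_1} \in \{1,2\}$ and $\s {\varrho_2} {c_1} \in \{-2,-1\}$.

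The strategy is now to exhibit a third integer hive flow $f^* \in P \setminus \{f_1, f_2\}$, contradicting $|P_\Z| = \LRC \la \mu \nu = 2$. Natural candidates are flows of the form $f_1 + \beta c + \alpha c_1$ for small integers $\alpha, \beta$, or proper cycles extracted from a flow decomposition of $c \pm c_1$. The guiding intuition is that $c_1$ carries strictly positive slack on every rhombus that is $f_1$-flat but not $f_2$-flat (precisely where $c$ creates a problem at $\varrho_1$), and $c_2 = -c_1$ does the same on every rhombus $f_2$-flat but not $f_1$-flat (matching $\varrho_2$), so combining $c$ with $c_1$ and $c_2$ via cycle surgery should locally repair both problematic spots.

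The main obstacle is verifying global slack non-negativity of the constructed candidate flow --- not only at $\varrho_1, \varrho_2$ but at every rhombus, including those in neither flat class, where combining cycles may itself introduce new negative slacks. I expect this verification to proceed by a first-critical-rhombus analysis analogous to the proof of the Shortest Turncycle Theorem in Section~\ref{sec:shoturncyc}: assuming a rhombus $\varrho^*$ is the first to drop to negative slack in the candidate flow, use the slack computations of Proposition~\ref{pro:slackcalc} and Claim~\ref{cla:slackforturncycles}, the hexagon equality (Claim~\ref{cla:BZ}), and the $f_1$-security of $c_1$ to force a combinatorial configuration around $\varrho^*$ that conflicts with the data already established at $\varrho_1$ or $\varrho_2$, closing the contradiction.
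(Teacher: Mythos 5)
Your opening moves match the paper's: the observation that for inner $\xi$ a rhombus is $\xi$\dash flat iff it is both $f_1$\dash flat and $f_2$\dash flat, and the extraction of rhombi $\varrho_1$ ($f_1$\dash flat, $\s{\varrho_1}{f_2}>0$) and $\varrho_2$ ($f_2$\dash flat, $\s{\varrho_2}{f_1}>0$) where $c$ uses negative contributions, are exactly how the paper begins its Case~1 analysis. The intended contradiction with $\LRC\la\mu\nu=2$ is also the right target. But the core of the argument is missing: everything after ``the strategy is now to exhibit a third integer hive flow'' is a declaration of intent, not a proof. Two concrete obstructions stand in the way of the route you sketch. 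First, producing an $f_\iot$\dash hive preserving direction different from $c_\iot$ does not yield a new integral point of $P$: hive preservation only gives $f_\iot+\varepsilon d\in P$ for small $\varepsilon$, whereas a third point of $P_\Z$ requires the full integral step to stay in $P$, i.e.\ \emph{security} (Proposition~\ref{pro:secureneighbors}). Bridging that gap is precisely the content of Key Lemma~\ref{keylem:ktt:secure} (every ordinary turncycle in $\resf{f_\iot}$ is automatically $f_\iot$\dash secure when $\LRC\la\mu\nu=2$), which has its own nontrivial proof via chains of bad rhombi and which your proposal neither states nor replaces. Your candidate $f_1+\beta c+\alpha c_1$ with small integers $\alpha,\beta$ founders on exactly this point.

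Second, the ``first-critical-rhombus analysis'' you defer to is where the paper invests most of its machinery, and it does not go through in the form you suggest. The paper does not do slack surgery on $c\pm c_1$ directly; instead it introduces the \emph{pipe} (the hive triangles traversed by $c_1$), classifies it as thin or thick (Claim~\ref{cla:ktt:twopipes}), shows every $\xi$\dash hive preserving cycle must meet the pipe (Claim~\ref{cla:ktt:mustusepipe}), and splits into cycles living inside the pipe versus cycles crossing the pipe border. The crossing case is handled by identifying the pipe border with $f_\iot$\dash flatspace sides (Claim~\ref{cla:flatspacesides}) and invoking the Rerouting Theorem (Proposition~\ref{pro:rerouting}) to manufacture a shortest turncycle in $\resf{f_\iot}$ crossing that side, which is then made ordinary (or ``almost ordinary'', Lemma~\ref{lem:ktt:moresecure}) by rerouting at special rhombi before the Key Lemma delivers the contradiction. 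None of this topology --- in particular the control of self\dash intersections and the interaction between $c$ and the pipe --- is visible in your plan, and without it the claim that the critical-rhombus configuration ``conflicts with the data at $\varrho_1$ or $\varrho_2$'' is unsupported. (A minor further point: the paper reads the \emph{either/or} in the statement exclusively, so the extremal case $\xi\in\{f_1,f_2\}$ is not trivial; it already requires the Key Lemma to rule out a cycle that is simultaneously $f_1$\dash and $f_2$\dash hive preserving.)
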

According to Proposition~\ref{pro:ktt:goal},
for proving (\textreferencemark)
it suffices to show that $c_1$ is the only $f_1$\dash hive preserving proper cycle
and $c_2$ is the only $f_2$\dash hive preserving proper cycle.
But this can be seen with the following key lemma,
whose proof we postpone to Subsection~\ref{subsec:ktt:secure}.
\begin{keylemma}
\label{keylem:ktt:secure}
Let $\LRC \la \mu \nu = 2$ and let $f_1$ and $f_2$ be the two integral points of $P(\la,\mu,\nu)$.
For each $\iot\in\{1,2\}$
each ordinary turncycle in $\resf {f_\iot}$ is $f_\iot$\dash secure.
\end{keylemma}
We apply Key Lemma~\ref{keylem:ktt:secure} as follows.
Let $\iot \in \{1,2\}$ and let $c' \neq c_\iot$ be an $f_\iot$\dash hive preserving proper cycle.
Then Key Lemma~\ref{keylem:ktt:secure} implies that $c'$ is $f_\iot$\dash secure.
The fact that $\LRC \la\mu\nu = 2$ implies $c' = c_\iot$.
Therefore we have shown that $c_\iot$ is the only $f_\iot$\dash hive preserving proper cycle
and are done proving Theorem~\ref{thm:ktt:lineseg}.

It remains to prove Key Lemma~\ref{keylem:ktt:secure} and Proposition~\ref{pro:ktt:goal}.

\subsection{Proof of Proposition~\protect\ref{pro:ktt:goal}}
\label{subsec:proofofkttgoal}
In this subsection we already use Key Lemma~\ref{keylem:ktt:secure}, which is proved later in Subsection~\ref{subsec:ktt:secure}.

We first prove Proposition~\ref{pro:ktt:goal} for extremal $\xi \in \{f_1,f_2\}$.
In this case it remains to show that there is no proper cycle that is both
$f_1$\dash hive preserving and $f_2$\dash hive preserving.
Indeed, for the sake of contradiction, assume there is such a cycle $c$.
Key Lemma~\ref{keylem:ktt:secure} implies that $c$ is both $f_1$\dash secure and $f_2$\dash secure,
in contradiction to \mbox{$\LRC \la\mu\nu = 2$}.

From now on, we assume that $\xi$ is inner.
Recall that in this case both $c_1$ and $c_2$ are $\xi$\dash hive preserving.

First of all, we note that if a rhombus $\rhc$ is $f_1$\dash flat and $f_2$\dash flat,
then $\rhc$ is also $\xi$\dash flat, because $\xi$ lies between $f_1$ and $f_2$.
The converse is also true: if a rhombus $\rhc$ is $\xi$\dash flat, then $\rhc$ is both $f_1$\dash flat
and $f_2$\dash flat, because $\s \rhc \xi = 0$ and $\s \rhc f_1 > 0$ would imply $\s \rhc f_2 < 0$,
in contradiction to $f_2 \in P$.

From this consideration, it follows that a cycle $c$ is $\xi$\dash hive preserving
if $c$ is $f_1$\dash hive preserving or $f_2$\dash hive preserving.

In the following Claim~\ref{cla:ktt:mustusepipe} we begin to rule out the existence
of $\xi$\dash hive preserving cycles whose curves have no intersection with the curve of~$c_1$.
We first introduce some terminology (see Figure~\ref{fig:ktt:pipes}):
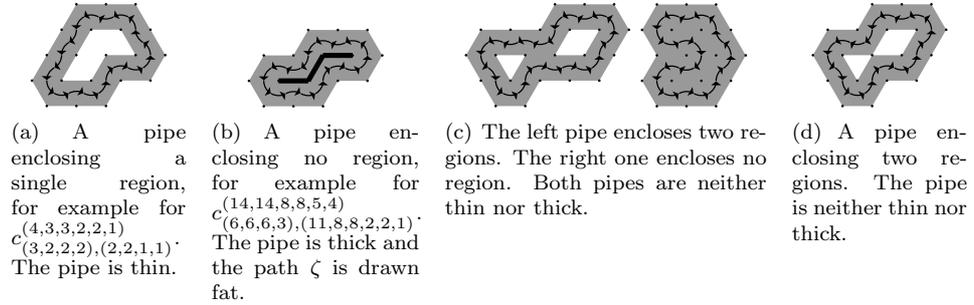
\begin{figure}[h]
\begin{center}
      \subfigure[A pipe enclosing a single region, for example for $\LRC{(3,2,2,2)}{(2,2,1,1)}{(4,3,3,2,2,1)}$. The pipe is thin.]
        {
\scalebox{1.2}{
\begin{tikzpicture}\fill[rhrhombifill] (23.095pt,8.0001pt) -- (4.619pt,8.0001pt) -- (-9.238pt,-16.0002pt) -- (-4.619pt,-24.0003pt) -- (13.857pt,-24.0003pt) -- (18.476pt,-16.0002pt) -- (27.714pt,-16.0002pt) -- (32.333pt,-8.0001pt) -- (23.095pt,8.0001pt) -- (18.476pt,0.0pt) -- (23.095pt,-8.0001pt) -- (13.857pt,-8.0001pt) -- (9.238pt,-16.0002pt) -- (0.0pt,-16.0002pt) -- (9.238pt,0.0pt) -- (18.476pt,0.0pt) -- cycle;\draw[thin,-my] (11.5475pt,-20.0003pt) arc (300:360:4.619pt);\draw[thin,-my] (13.857pt,-16.0002pt) arc (180:120:4.619pt);\draw[thin,-my] (16.1665pt,-12.0002pt) arc (120:60:4.619pt);\draw[thin,-my] (20.7855pt,-12.0002pt) arc (240:300:4.619pt);\draw[thin,-my] (25.4045pt,-12.0002pt) arc (300:360:4.619pt);\draw[thin,-my] (27.714pt,-8.0001pt) arc (0:60:4.619pt);\draw[thin,-my] (25.4045pt,-4.0001pt) arc (240:180:4.619pt);\draw[thin,-my] (23.095pt,0.0pt) arc (0:60:4.619pt);\draw[thin,-my] (20.7855pt,4.0001pt) arc (60:120:4.619pt);\draw[thin,-my] (16.1665pt,4.0001pt) arc (300:240:4.619pt);\draw[thin,-my] (11.5475pt,4.0001pt) arc (60:120:4.619pt);\draw[thin,-my] (6.9285pt,4.0001pt) arc (120:180:4.619pt);\draw[thin,-my] (4.619pt,0.0pt) arc (0:-60:4.619pt);\draw[thin,-my] (2.3095pt,-4.0001pt) arc (120:180:4.619pt);\draw[thin,-my] (0.0pt,-8.0001pt) arc (0:-60:4.619pt);\draw[thin,-my] (-2.3095pt,-12.0002pt) arc (120:180:4.619pt);\draw[thin,-my] (-4.619pt,-16.0002pt) arc (180:240:4.619pt);\draw[thin,-my] (-2.3095pt,-20.0003pt) arc (240:300:4.619pt);\draw[thin,-my] (2.3095pt,-20.0003pt) arc (120:60:4.619pt);\draw[thin,-my] (6.9285pt,-20.0003pt) arc (240:300:4.619pt);\fill (9.238pt,0.0pt) circle (0.4pt);\fill (18.476pt,0.0pt) circle (0.4pt);\fill (4.619pt,-8.0001pt) circle (0.4pt);\fill (13.857pt,-8.0001pt) circle (0.4pt);\fill (23.095pt,-8.0001pt) circle (0.4pt);\fill (0.0pt,-16.0002pt) circle (0.4pt);\fill (9.238pt,-16.0002pt) circle (0.4pt);\fill (4.619pt,8.0001pt) circle (0.4pt);\fill (13.857pt,8.0001pt) circle (0.4pt);\fill (23.095pt,8.0001pt) circle (0.4pt);\fill (27.714pt,0.0pt) circle (0.4pt);\fill (32.333pt,-8.0001pt) circle (0.4pt);\fill (27.714pt,-16.0002pt) circle (0.4pt);\fill (18.476pt,-16.0002pt) circle (0.4pt);\fill (13.857pt,-24.0003pt) circle (0.4pt);\fill (4.619pt,-24.0003pt) circle (0.4pt);\fill (-4.619pt,-24.0003pt) circle (0.4pt);\fill (-9.238pt,-16.0002pt) circle (0.4pt);\fill (-4.619pt,-8.0001pt) circle (0.4pt);\fill (0.0pt,0.0pt) circle (0.4pt);\end{tikzpicture}
}
\nopar\label{fig:ktt:pipes:thin}}
\hspace{0.1cm}
      \subfigure[A pipe enclosing no region, for example for $\LRC{(6,6,6,3)}{(11,8,8,2,2,1)}{(14,14,8,8,5,4)}$. The pipe is thick and the path $\zeta$ is drawn fat.]
        {
\hspace{0.1cm}
\scalebox{1.2}{
\begin{tikzpicture}\fill[rhrhombifill] (0.0pt,16.0002pt) -- (-4.619pt,8.0001pt) -- (0.0pt,0.0pt) -- (18.476pt,0.0pt) -- (23.095pt,8.0001pt) -- (32.333pt,8.0001pt) -- (36.952pt,16.0002pt) -- (32.333pt,24.0003pt) -- (13.857pt,24.0003pt) -- (9.238pt,16.0002pt) -- cycle;\draw[thin,-my] (16.1665pt,4.0001pt) arc (300:360:4.619pt);\draw[thin,-my] (18.476pt,8.0001pt) arc (180:120:4.619pt);\draw[thin,-my] (20.7855pt,12.0002pt) arc (120:60:4.619pt);\draw[thin,-my] (25.4045pt,12.0002pt) arc (240:300:4.619pt);\draw[thin,-my] (30.0235pt,12.0002pt) arc (300:360:4.619pt);\draw[thin,-my] (32.333pt,16.0002pt) arc (0:60:4.619pt);\draw[thin,-my] (30.0235pt,20.0003pt) arc (60:120:4.619pt);\draw[thin,-my] (25.4045pt,20.0003pt) arc (300:240:4.619pt);\draw[thin,-my] (20.7855pt,20.0003pt) arc (60:120:4.619pt);\draw[thin,-my] (16.1665pt,20.0003pt) arc (120:180:4.619pt);\draw[thin,-my] (13.857pt,16.0002pt) arc (0:-60:4.619pt);\draw[thin,-my] (11.5475pt,12.0002pt) arc (300:240:4.619pt);\draw[thin,-my] (6.9285pt,12.0002pt) arc (60:120:4.619pt);\draw[thin,-my] (2.3095pt,12.0002pt) arc (120:180:4.619pt);\draw[thin,-my] (0.0pt,8.0001pt) arc (180:240:4.619pt);\draw[thin,-my] (2.3095pt,4.0001pt) arc (240:300:4.619pt);\draw[thin,-my] (6.9285pt,4.0001pt) arc (120:60:4.619pt);\draw[thin,-my] (11.5475pt,4.0001pt) arc (240:300:4.619pt);\fill (13.857pt,24.0003pt) circle (0.4pt);\fill (23.095pt,24.0003pt) circle (0.4pt);\fill (32.333pt,24.0003pt) circle (0.4pt);\fill (0.0pt,16.0002pt) circle (0.4pt);\fill (9.238pt,16.0002pt) circle (0.4pt);\fill (18.476pt,16.0002pt) circle (0.4pt);\fill (27.714pt,16.0002pt) circle (0.4pt);\fill (36.952pt,16.0002pt) circle (0.4pt);\fill (-4.619pt,8.0001pt) circle (0.4pt);\fill (4.619pt,8.0001pt) circle (0.4pt);\fill (13.857pt,8.0001pt) circle (0.4pt);\fill (23.095pt,8.0001pt) circle (0.4pt);\fill (0.0pt,0.0pt) circle (0.4pt);\fill (9.238pt,0.0pt) circle (0.4pt);\fill (18.476pt,0.0pt) circle (0.4pt);\fill (32.333pt,8.0001pt) circle (0.4pt);\draw[rhrhombithickside] (4.619pt,8.0001pt) -- (13.857pt,8.0001pt);\draw[rhrhombithickside] (13.857pt,8.0001pt) -- (18.476pt,16.0002pt);\draw[rhrhombithickside] (18.476pt,16.0002pt) -- (27.714pt,16.0002pt);\end{tikzpicture}
}
\hspace{0.1cm}
        \nopar\label{fig:ktt:pipes:thick}
}
\hspace{0.1cm}
      \subfigure[The left pipe encloses two regions. The right one encloses no region. Both pipes are neither thin nor thick.]
        {
\scalebox{1.2}{
\begin{tikzpicture}\fill[rhrhombifill] (-13.857pt,8.0001pt) -- (-9.238pt,16.0002pt) -- (9.238pt,16.0002pt) -- (13.857pt,24.0003pt) -- (32.333pt,24.0003pt) -- (36.952pt,16.0002pt) -- (27.714pt,0.0pt) -- (9.238pt,0.0pt) -- (4.619pt,-8.0001pt) -- (-4.619pt,-8.0001pt) -- (-13.857pt,8.0001pt) -- (-4.619pt,8.0001pt) -- (0.0pt,0.0pt) -- (4.619pt,8.0001pt) -- (23.095pt,8.0001pt) -- (27.714pt,16.0002pt) -- (18.476pt,16.0002pt) -- (13.857pt,8.0001pt) -- (-4.619pt,8.0001pt) -- cycle;\fill (-9.238pt,16.0002pt) circle (0.4pt);\fill (0.0pt,16.0002pt) circle (0.4pt);\fill (9.238pt,16.0002pt) circle (0.4pt);\fill (18.476pt,16.0002pt) circle (0.4pt);\fill (27.714pt,16.0002pt) circle (0.4pt);\fill (36.952pt,16.0002pt) circle (0.4pt);\fill (13.857pt,24.0003pt) circle (0.4pt);\fill (23.095pt,24.0003pt) circle (0.4pt);\fill (32.333pt,24.0003pt) circle (0.4pt);\fill (-13.857pt,8.0001pt) circle (0.4pt);\fill (-4.619pt,8.0001pt) circle (0.4pt);\fill (4.619pt,8.0001pt) circle (0.4pt);\fill (13.857pt,8.0001pt) circle (0.4pt);\fill (23.095pt,8.0001pt) circle (0.4pt);\fill (32.333pt,8.0001pt) circle (0.4pt);\fill (-9.238pt,0.0pt) circle (0.4pt);\fill (0.0pt,0.0pt) circle (0.4pt);\fill (9.238pt,0.0pt) circle (0.4pt);\fill (18.476pt,0.0pt) circle (0.4pt);\fill (27.714pt,0.0pt) circle (0.4pt);\fill (-4.619pt,-8.0001pt) circle (0.4pt);\fill (4.619pt,-8.0001pt) circle (0.4pt);\draw[thin,-my] (25.4045pt,4.0001pt) arc (300:360:4.619pt);\draw[thin,-my] (27.714pt,8.0001pt) arc (180:120:4.619pt);\draw[thin,-my] (30.0235pt,12.0002pt) arc (300:360:4.619pt);\draw[thin,-my] (32.333pt,16.0002pt) arc (0:60:4.619pt);\draw[thin,-my] (30.0235pt,20.0003pt) arc (60:120:4.619pt);\draw[thin,-my] (25.4045pt,20.0003pt) arc (300:240:4.619pt);\draw[thin,-my] (20.7855pt,20.0003pt) arc (60:120:4.619pt);\draw[thin,-my] (16.1665pt,20.0003pt) arc (120:180:4.619pt);\draw[thin,-my] (13.857pt,16.0002pt) arc (0:-60:4.619pt);\draw[thin,-my] (11.5475pt,12.0002pt) arc (300:240:4.619pt);\draw[thin,-my] (6.9285pt,12.0002pt) arc (60:120:4.619pt);\draw[thin,-my] (2.3095pt,12.0002pt) arc (300:240:4.619pt);\draw[thin,-my] (-2.3095pt,12.0002pt) arc (60:120:4.619pt);\draw[thin,-my] (-6.9285pt,12.0002pt) arc (120:180:4.619pt);\draw[thin,-my] (-9.238pt,8.0001pt) arc (180:240:4.619pt);\draw[thin,-my] (-6.9285pt,4.0001pt) arc (60:0:4.619pt);\draw[thin,-my] (-4.619pt,0.0pt) arc (180:240:4.619pt);\draw[thin,-my] (-2.3095pt,-4.0001pt) arc (240:300:4.619pt);\draw[thin,-my] (2.3095pt,-4.0001pt) arc (300:360:4.619pt);\draw[thin,-my] (4.619pt,0.0pt) arc (180:120:4.619pt);\draw[thin,-my] (6.9285pt,4.0001pt) arc (120:60:4.619pt);\draw[thin,-my] (11.5475pt,4.0001pt) arc (240:300:4.619pt);\draw[thin,-my] (16.1665pt,4.0001pt) arc (120:60:4.619pt);\draw[thin,-my] (20.7855pt,4.0001pt) arc (240:300:4.619pt);\end{tikzpicture}
\begin{tikzpicture}\fill[rhrhombifill] (0.0pt,-16.0002pt) -- (18.476pt,-16.0002pt) -- (27.714pt,0.0pt) -- (18.476pt,16.0002pt) -- (0.0pt,16.0002pt) -- (-4.619pt,8.0001pt) -- (0.0pt,0.0pt) -- (-4.619pt,-8.0001pt) -- cycle;\fill (4.619pt,8.0001pt) circle (0.4pt);\fill (13.857pt,8.0001pt) circle (0.4pt);\fill (9.238pt,0.0pt) circle (0.4pt);\fill (18.476pt,0.0pt) circle (0.4pt);\fill (4.619pt,-8.0001pt) circle (0.4pt);\fill (13.857pt,-8.0001pt) circle (0.4pt);\draw[thin,-my] (16.1665pt,-12.0002pt) arc (300:360:4.619pt);\draw[thin,-my] (18.476pt,-8.0001pt) arc (180:120:4.619pt);\draw[thin,-my] (20.7855pt,-4.0001pt) arc (300:360:4.619pt);\draw[thin,-my] (23.095pt,0.0pt) arc (0:60:4.619pt);\draw[thin,-my] (20.7855pt,4.0001pt) arc (240:180:4.619pt);\draw[thin,-my] (18.476pt,8.0001pt) arc (0:60:4.619pt);\draw[thin,-my] (16.1665pt,12.0002pt) arc (60:120:4.619pt);\draw[thin,-my] (11.5475pt,12.0002pt) arc (300:240:4.619pt);\draw[thin,-my] (6.9285pt,12.0002pt) arc (60:120:4.619pt);\draw[thin,-my] (2.3095pt,12.0002pt) arc (120:180:4.619pt);\draw[thin,-my] (0.0pt,8.0001pt) arc (180:240:4.619pt);\draw[thin,-my] (2.3095pt,-4.0001pt) arc (120:180:4.619pt);\draw[thin,-my] (0.0pt,-8.0001pt) arc (180:240:4.619pt);\draw[thin,-my] (2.3095pt,-12.0002pt) arc (240:300:4.619pt);\draw[thin,-my] (6.9285pt,-12.0002pt) arc (120:60:4.619pt);\draw[thin,-my] (11.5475pt,-12.0002pt) arc (240:300:4.619pt);\draw[thin,-my] (2.3095pt,4.0001pt) arc (240:300:4.619pt);\draw[thin,-my] (6.9285pt,4.0001pt) arc (120:60:4.619pt);\draw[thin,-my] (11.5475pt,4.0001pt) arc (60:0:4.619pt);\draw[thin,-my] (13.857pt,0.0pt) arc (0:-60:4.619pt);\draw[thin,-my] (11.5475pt,-4.0001pt) arc (300:240:4.619pt);\draw[thin,-my] (6.9285pt,-4.0001pt) arc (60:120:4.619pt);\fill (0.0pt,16.0002pt) circle (0.4pt);\fill (9.238pt,16.0002pt) circle (0.4pt);\fill (18.476pt,16.0002pt) circle (0.4pt);\fill (23.095pt,8.0001pt) circle (0.4pt);\fill (27.714pt,0.0pt) circle (0.4pt);\fill (23.095pt,-8.0001pt) circle (0.4pt);\fill (18.476pt,-16.0002pt) circle (0.4pt);\fill (9.238pt,-16.0002pt) circle (0.4pt);\fill (0.0pt,-16.0002pt) circle (0.4pt);\fill (-4.619pt,-8.0001pt) circle (0.4pt);\fill (0.0pt,0.0pt) circle (0.4pt);\fill (-4.619pt,8.0001pt) circle (0.4pt);\end{tikzpicture}
}
        \nopar\label{fig:ktt:pipes:impossible}
}
\hspace{0.1cm}
      \subfigure[A pipe enclosing two regions. The pipe is neither thin nor thick.]
        {
\scalebox{1.2}{
\begin{tikzpicture}\fill[rhrhombifill] (13.857pt,24.0003pt) -- (32.333pt,24.0003pt) -- (36.952pt,16.0002pt) -- (27.714pt,0.0pt) -- (18.476pt,0.0pt) -- (13.857pt,-8.0001pt) -- (4.619pt,-8.0001pt) -- (-4.619pt,8.0001pt) -- (0.0pt,16.0002pt) -- (9.238pt,16.0002pt) -- (13.857pt,24.0003pt) -- (18.476pt,16.0002pt) -- (13.857pt,8.0001pt) -- (4.619pt,8.0001pt) -- (9.238pt,0.0pt) -- (13.857pt,8.0001pt) -- (23.095pt,8.0001pt) -- (27.714pt,16.0002pt) -- (18.476pt,16.0002pt) -- cycle;\fill (0.0pt,16.0002pt) circle (0.4pt);\fill (9.238pt,16.0002pt) circle (0.4pt);\fill (18.476pt,16.0002pt) circle (0.4pt);\fill (27.714pt,16.0002pt) circle (0.4pt);\fill (36.952pt,16.0002pt) circle (0.4pt);\fill (13.857pt,24.0003pt) circle (0.4pt);\fill (23.095pt,24.0003pt) circle (0.4pt);\fill (32.333pt,24.0003pt) circle (0.4pt);\fill (-4.619pt,8.0001pt) circle (0.4pt);\fill (4.619pt,8.0001pt) circle (0.4pt);\fill (13.857pt,8.0001pt) circle (0.4pt);\fill (23.095pt,8.0001pt) circle (0.4pt);\fill (32.333pt,8.0001pt) circle (0.4pt);\fill (0.0pt,0.0pt) circle (0.4pt);\fill (9.238pt,0.0pt) circle (0.4pt);\fill (18.476pt,0.0pt) circle (0.4pt);\fill (27.714pt,0.0pt) circle (0.4pt);\fill (4.619pt,-8.0001pt) circle (0.4pt);\fill (13.857pt,-8.0001pt) circle (0.4pt);\draw[thin,-my] (25.4045pt,4.0001pt) arc (300:360:4.619pt);\draw[thin,-my] (27.714pt,8.0001pt) arc (180:120:4.619pt);\draw[thin,-my] (30.0235pt,12.0002pt) arc (300:360:4.619pt);\draw[thin,-my] (32.333pt,16.0002pt) arc (0:60:4.619pt);\draw[thin,-my] (30.0235pt,20.0003pt) arc (60:120:4.619pt);\draw[thin,-my] (25.4045pt,20.0003pt) arc (300:240:4.619pt);\draw[thin,-my] (20.7855pt,20.0003pt) arc (60:120:4.619pt);\draw[thin,-my] (16.1665pt,20.0003pt) arc (120:180:4.619pt);\draw[thin,-my] (13.857pt,16.0002pt) arc (0:-60:4.619pt);\draw[thin,-my] (11.5475pt,12.0002pt) arc (300:240:4.619pt);\draw[thin,-my] (6.9285pt,12.0002pt) arc (60:120:4.619pt);\draw[thin,-my] (2.3095pt,12.0002pt) arc (120:180:4.619pt);\draw[thin,-my] (0.0pt,8.0001pt) arc (180:240:4.619pt);\draw[thin,-my] (2.3095pt,4.0001pt) arc (60:0:4.619pt);\draw[thin,-my] (4.619pt,0.0pt) arc (180:240:4.619pt);\draw[thin,-my] (6.9285pt,-4.0001pt) arc (240:300:4.619pt);\draw[thin,-my] (11.5475pt,-4.0001pt) arc (300:360:4.619pt);\draw[thin,-my] (13.857pt,0.0pt) arc (180:120:4.619pt);\draw[thin,-my] (16.1665pt,4.0001pt) arc (120:60:4.619pt);\draw[thin,-my] (20.7855pt,4.0001pt) arc (240:300:4.619pt);\end{tikzpicture}
}
        \nopar\label{fig:ktt:pipes:alsoimpossible}
}
\caption{Pipes and their enclosed regions. The terms ``thin'' and ``thick'' refer to Definition~\protect\ref{def:ktt:pipetypes}.}
    \nopar\label{fig:ktt:pipes}
\end{center}
\end{figure}
The set of hive triangles from which $c_1$ uses turns is called the \emph{pipe}.
Note that this coincides with the set of hive triangles from which $c_2$ uses turns.
A hive triangle of the pipe is called a \emph{pipe triangle}.
The pipe partitions the plane into several connected components:
The pipe itself, the \emph{outer region} and the \emph{inner regions} enclosed by the pipe.
The \emph{pipe border} is defined to be the set of edges between the regions
and can be divided into the \emph{inner pipe border} and the \emph{outer pipe border}.

\begin{claim}
\label{cla:ktt:pipeborder}
Rhombi whose diagonal lies on the outer pipe border are not $f_1$\dash flat.
Likewise, rhombi whose diagonal lies on the inner pipe border are not $f_2$\dash flat.
\end{claim}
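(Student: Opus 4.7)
The plan is to analyze, for a rhombus $\rhc$ whose diagonal lies on the outer pipe border, how $c_1$ must traverse $\rhc$, and then invoke Lemma~\ref{lem:fhivepresiffnoneg} to rule out $f_1$-flatness. The inner-border statement will then follow by the symmetric argument with $(f_1,c_1)$ replaced by $(f_2, c_2 = -c_1)$.

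First, I would observe that exactly one of the two hive triangles of $\rhc$ is a pipe triangle $T_p$ and the other, $T_o$, lies in the outer region; hence $c_1$ uses exactly one turn in $T_p$ (since cycles in $G$ are simple, every hive triangle is visited by at most one turn, and pipe triangles are visited by at least one) and no turn in $T_o$. This turn cannot pass through the white vertex sitting on the diagonal of $\rhc$: the only two edges of $G$ incident to that vertex lead to the black centers of $T_p$ and $T_o$, so using it would force $c_1$ to also have a turn in $T_o$. Therefore the turn of $c_1$ runs between the two white vertices on the non-diagonal sides of $T_p$, i.e., it passes through the acute corner of $\rhc$ opposite to the diagonal. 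Because $c_1$ is counterclockwise, the outer region lies on its right along its entire curve; at this very turn this means the diagonal (across which $T_o$ sits) lies on the right of $c_1$'s direction of travel.

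The crucial step is then to match this geometric datum with Definition~\ref{def:slack-contr}: a short case check through the three rotational orientations of $\rhc$ and the two choices of which triangle plays the role of $T_p$ identifies $c_1$'s turn as one of $\rhpoulMl$ or $\rhpolrWl$, both of which are negative slack contributions, i.e., elements of $\p_-(\rhc)$. This local orientation matching is the one place where I would have to be genuinely careful, but it is entirely routine. With this identification in hand, suppose for contradiction that $\rhc$ is $f_1$-flat. Since $c_1$ is $f_1$-secure, it is $f_1$-hive preserving, so Lemma~\ref{lem:fhivepresiffnoneg} forbids $c_1$ from using any negative contribution in $\rhc$, a contradiction. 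Hence $\rhc$ is not $f_1$-flat. For the second assertion the same analysis applies to a rhombus whose diagonal lies on the inner pipe border: the inner region now lies on the \emph{left} of the counterclockwise cycle $c_1$, so $c_1$'s turn through the acute corner is a positive slack contribution; its reverse, used by $c_2 = -c_1$, is therefore a negative contribution in $\rhc$, and Lemma~\ref{lem:fhivepresiffnoneg} applied to the $f_2$-hive preserving cycle $c_2$ rules out $f_2$-flatness of $\rhc$.
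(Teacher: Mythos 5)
Your argument is correct and is precisely the intended one: the paper's entire proof is the single sentence ``This follows directly from $c_1$ traversing the pipe in counterclockwise direction,'' and your elaboration --- the unique turn of $c_1$ in the pipe triangle must round the acute corner opposite the diagonal, the counterclockwise orientation places the outer (resp.\ inner) region on the right (resp.\ left) and hence identifies that turn as a negative contribution for $c_1$ (resp.\ for $c_2$), and Lemma~\ref{lem:fhivepresiffnoneg} then forbids flatness --- is exactly what that sentence is gesturing at. No substantive difference in approach; you have simply supplied the details the paper omits.
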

\begin{proof}
 This follows directly from $c_1$ traversing the pipe in counterclockwise direction.
\end{proof}

\begin{claim}
\label{cla:ktt:mustusepipe}
Each $\xi$\dash hive preserving cycle uses a pipe triangle.
\end{claim}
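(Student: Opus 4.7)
The plan is to argue by contradiction. Assume $c$ is a $\xi$-hive preserving proper cycle that uses no pipe triangle. Because $c$ then uses no black vertex inside the pipe, its planar curve cannot cross the pipe, and therefore $c$ lies entirely in a single connected component $R$ of the complement of the pipe. Either $R$ is the outer region, in which case we set $\iota:=1$, or $R$ is one of the inner regions, in which case we set $\iota:=2$.

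The crucial step is to show that $c$ is $f_\iota$-hive preserving, i.e., that $\s \varrho c \geq 0$ for every $f_\iota$-flat rhombus $\varrho$. We analyse by position. If both triangles of $\varrho$ lie in $R$, then $c_1$ uses no turn in $\varrho$, so $\s \varrho {c_1}=0$; together with $\s \varrho {f_\iota}=0$ this forces $\varrho$ to be simultaneously $f_1$-flat and $f_2$-flat, hence $\xi$-flat, and so $\s \varrho c \geq 0$ because $c$ is $\xi$-hive preserving. If $\varrho$ has its diagonal on the pipe border with the non-pipe triangle in $R$, then Claim~\ref{cla:ktt:pipeborder} yields $\s \varrho {f_\iota}\geq 1$, contradicting $f_\iota$-flatness; this case is vacuous. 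In every other configuration $\varrho$ is disjoint from $R$, so $c$ does not touch $\varrho$ and $\s \varrho c = 0 \geq 0$.

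Having established that $c$ is $f_\iota$-hive preserving, Key Lemma~\ref{keylem:ktt:secure} upgrades $c$ to an $f_\iota$-secure proper cycle, and Proposition~\ref{pro:secureneighbors} then gives $f_\iota + c \in P_\Z = \{f_1,f_2\}$. Since $c$ is a nonzero proper cycle we must have $f_\iota+c = f_{3-\iota}$, i.e., $c = c_\iota$ as a flow class. But $c_\iota$ has nonzero throughput on edges incident to pipe triangles, whereas $c$ has zero throughput there by hypothesis, a contradiction.

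The main obstacle is pairing the two halves of Claim~\ref{cla:ktt:pipeborder} with the correct choice of $\iota$: one has to notice that the ``dangerous'' pipe-border rhombi (those where $c$ could contribute a single negative slack contribution through its non-pipe triangle) are precisely the ones whose non-pipe triangle lies in~$R$, and these are exactly the ones eliminated by the $f_\iota$-flatness hypothesis via Claim~\ref{cla:ktt:pipeborder}. Once this alignment is set up, the remaining case analysis on the position of $\varrho$ relative to the pipe is routine.
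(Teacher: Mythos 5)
Your proof is correct and takes essentially the same route as the paper's: both reduce the statement to showing that a $\xi$\dash hive preserving cycle confined to the outer (resp.\ an inner) region is $f_1$\dash (resp.\ $f_2$\dash) hive preserving, using that $\xi$, $f_1$, $f_2$ differ only by multiples of $c_1$ together with Claim~\ref{cla:ktt:pipeborder}, and then invoke Key Lemma~\ref{keylem:ktt:secure} to contradict $\LRC\la\mu\nu=2$. Your version merely spells out the per-rhombus case analysis and the final step ($f_\iot+c\in P_\Z$ forces $c=c_\iot$, which does use pipe triangles) more explicitly than the paper does.
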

\begin{proof}
First we show that
each $\xi$\dash hive preserving cycle that runs only in the outer region is also $f_1$\dash hive preserving
and each $\xi$\dash hive preserving cycle that runs only in an inner region is also $f_2$\dash hive preserving:
Recall that the flows $\xi$, $f_1$, and $f_2$ only differ by multiples of $c_1$.
So for a rhombus in which both hive triangles are not pipe triangles,
$\xi$\dash flatness, $f_1$\dash flatness, and $f_2$\dash flatness coincide.
The first claim follows with Claim~\ref{cla:ktt:pipeborder}.

For the sake of contradiction,
assume now the existence of a $\xi$\dash hive preserving cycle $c$ that uses no pipe triangle.
Then $c$ runs only in the outer region and is thus $f_1$\dash hive preserving,
or $c$ runs only in the inner region and is thus $f_2$\dash hive preserving.
Key Lemma~\ref{keylem:ktt:secure} ensures that $c$ is $f_1$\dash secure or $f_2$\dash secure,
which is a contradiction to $\LRC \la \mu \nu = 2$.
\end{proof}

We show now that there are severe restrictions on the possible shape of the pipe,
forcing it to have at most one inner region.
The upcoming Claim~\ref{cla:ktt:twopipes} shows that the following two fundamentally different types,
introduced in the next definition, are the only types of pipes that can appear.
\begin{definition}
\label{def:ktt:pipetypes}
We call the pipe \emph{thin} if it has the following property, see Figure~\ref{fig:ktt:pipes:thin}:
Two pipe triangles share a side iff they are direct predecessors or direct successors when traversing~$c_1$.
Additionally, we require that the pipe encloses a single inner region,
see Figure~\ref{fig:ktt:pipes:impossible} and Figure~\ref{fig:ktt:pipes:alsoimpossible} for counterexamples.

We call the pipe \emph{thick} if it has the following property, see Figure~\ref{fig:ktt:pipes:thick}:
There exists a path~$\zeta$ in $\Delta$,
called the \emph{center curve}, such that $\zeta$ has only obtuse angles of $120^\circ$ and
the path $c_1$ runs around $\zeta$ as indicated in Figure~\ref{fig:ktt:pipes:thick}.
Additionally, we require that
two pipe triangles $\Delta_1$ and $\Delta_2$ share a side $k\in E(\Delta)$
iff either $k$ is an edge of $\zeta$ or $\Delta_1$ and $\Delta_2$ are direct
predecessors or direct successors when traversing~$c_\iot$.
The center curve may consist of a single vertex only.
%
%
\end{definition}

\begin{claim}
\label{cla:ktt:twopipes}
The pipe is either thin or thick.
\end{claim}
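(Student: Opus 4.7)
The plan is to analyse adjacencies between pipe triangles locally, then deduce the global structure via Key Lemma \ref{keylem:ktt:secure}. Since $c_1$ is an ordinary proper cycle, each pipe triangle $T$ carries exactly one turn, using two of its three sides; call the remaining side the \emph{free side} of $T$. If two pipe triangles $T_1, T_2$ share a side $k$, then $c_1$ either passes through the white vertex on $k$ (in which case both turns use $k$ and $T_1, T_2$ are direct neighbours along $c_1$), or it does not (in which case neither turn uses $k$, so $k$ is the common free side of $T_1$ and $T_2$, and $\varrho := T_1 \cup T_2$ is a rhombus with diagonal $k$ whose four non-diagonal edges are all crossed by $c_1$). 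Rhombi of the latter kind I shall call \emph{bridge rhombi}.

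Next I would classify bridge rhombi. In a bridge rhombus $\varrho$, $c_1 \cap E_\varrho$ consists of two slack contributions, one in each triangle. Using Proposition \ref{pro:slackcalc} together with the facts that $c_1$ is $f_1$-hive preserving and $c_2 = -c_1$ is $f_2$-hive preserving, a short case analysis shows that only the symmetric pair of contributions depicted in Figure \ref{fig:ktt:pipes:thick} is admissible, giving $\s \varrho {c_1} = 2$. Every other choice of two contributions either violates $\s \varrho {c_1} \geq 0$ at an $f_1$-flat $\varrho$, or violates the corresponding inequality for $c_2$ at an $f_2$-flat $\varrho$, or forces two overlapping nearly-$f_1$-flat rhombi whose joint acute-angle condition fails. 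Thus bridge rhombi come in a unique local type.

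Let $\zeta$ denote the union of the diagonals of all bridge rhombi. A second local argument at a pipe triangle $T$ lying along two bridge rhombi shows that their diagonals share the obtuse $120^\circ$ vertex of $T$: an acute meeting would produce two overlapping bridge configurations whose prescribed turn orientations contradict each other. Hence $\zeta$ is a simple path in $\Delta$ with only $120^\circ$ angles, and $c_1$ wraps around $\zeta$ in the pattern indicated in Figure \ref{fig:ktt:pipes:thick}.

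Finally I would rule out mixed and multi-region pipes. If $\zeta$ is empty, then every pipe-pipe adjacency is of the first type and the trajectory of $c_1$ is a simple closed curve in the plane. If it enclosed two or more inner regions, this trajectory would have to visit some vertex of $\Delta$ more than once; analysing the turns at such a vertex produces a proper ordinary turncycle $c' \neq c_1$ in $\resf{f_1}$, which by Key Lemma \ref{keylem:ktt:secure} is $f_1$-secure, yielding a third integral flow in $P$ and contradicting $\LRC \la \mu \nu = 2$. If $\zeta$ is nonempty, an analogous shortcut argument applied to any pipe triangle not adjacent to $\zeta$ gives the same contradiction, so all pipe triangles sit along $\zeta$ and the pipe is thick. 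The main obstacle will be the case analysis for bridge rhombi: verifying that the joint $f_1$- and $f_2$-security constraints truly leave only the single admissible local configuration, and that these local rigidities assemble globally into a clean center curve with only $120^\circ$ angles.
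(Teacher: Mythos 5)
Your reduction of pipe--triangle adjacencies to ``consecutive along $c_1$'' versus ``bridge rhombus'' is correct (each pipe triangle carries exactly one turn of the ordinary cycle $c_1$, hence has exactly one free side), and your overall plan --- local rigidity plus reroutings that contradict $\LRC \la \mu \nu = 2$ via Key Lemma~\ref{keylem:ktt:secure} --- is the right one. But two of your local steps do not hold up. First, the classification of bridge rhombi: besides the two ``wrapping'' orientations (both turns around the acute corners in the same rotational sense, giving slack $\pm 2$), there are two ``parallel'' orientations in which the two strands of $c_1$ traverse the rhombus in the same direction; for these $\s \varrho {c_1} = \s \varrho {c_2} = 0$, so hive preservation of $c_1$ at $f_1$ and of $c_2$ at $f_2$ impose no constraint whatsoever on $\varrho$, and no overlapping nearly\dash flat configuration is forced either. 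A purely local slack analysis therefore cannot reduce bridge rhombi to a unique admissible type. Second, your argument that consecutive diagonals of $\zeta$ meet at $120^\circ$ examines ``a pipe triangle lying along two bridge rhombi'' --- but no such triangle exists: a pipe triangle has exactly one free side and hence belongs to at most one bridge rhombus, so the configuration you propose to exclude is vacuous and the $120^\circ$ structure of $\zeta$ is not established by it.

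What is actually needed is a propagation argument rather than a pointwise classification. The paper first kills the hairpin configuration (the cycle entering and leaving a pair of adjacent triangles so as to pinch off an extra inner region) by an explicit rerouting in $\resf {f_1}$ whose result is $f_1$\dash hive preserving, hence $f_1$\dash secure by Key Lemma~\ref{keylem:ktt:secure}, contradicting $\LRC \la \mu \nu = 2$; this disposes of the multi-region pipes of Figure~\ref{fig:ktt:pipes:alsoimpossible}. Then, if the pipe is not thin, a topological argument (two parallel strands must eventually turn around) produces at least one \emph{wrapping} bridge rhombus for $c_1$ or for $c_2$; from that seed one shows that ``$c_\iot$ cannot diverge'': at each step the hexagon equality (Claim~\ref{cla:BZ}) pins down which neighbouring rhombi are $f_\iot$\dash flat, and every alternative continuation admits a rerouting contradicting $\LRC \la \mu \nu = 2$, forcing the cycle to hug a path $\zeta$ with only $120^\circ$ angles. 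The absence of parallel bridge rhombi and the shape of $\zeta$ are \emph{consequences} of this global propagation, not inputs to it; without that propagation your proof has a genuine gap.
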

\begin{proof}
First of all, assume that $c_1$ runs as depicted in
\begin{tikzpictured}\fill[thick,fill=black!20] (0.0pt,0.0pt) -- (4.619pt,8.0001pt) -- (13.857pt,8.0001pt) -- (9.238pt,0.0pt) -- cycle;\fill[thick,fill=black!20] (-4.619pt,8.0001pt) -- (4.619pt,8.0001pt) -- (9.238pt,16.0002pt) -- (0.0pt,16.0002pt) -- cycle;\draw[thin,-my] (4.619pt,0.0pt) arc (180:120:4.619pt);\draw[thin,-my] (6.9285pt,4.0001pt) arc (120:60:4.619pt);\draw[thin,-my] (4.619pt,16.0002pt) arc (0:-60:4.619pt);\draw[thin,-my] (2.3095pt,12.0002pt) arc (300:240:4.619pt);\end{tikzpictured}%
.
Then $c_1$ can be rerouted in $\resf {f_1}$ to $\bar c$ via
\begin{tikzpictured}\fill[thick,fill=black!20] (0.0pt,0.0pt) -- (4.619pt,8.0001pt) -- (13.857pt,8.0001pt) -- (9.238pt,0.0pt) -- cycle;\fill[thick,fill=black!20] (-4.619pt,8.0001pt) -- (4.619pt,8.0001pt) -- (9.238pt,16.0002pt) -- (0.0pt,16.0002pt) -- cycle;\draw[thin,-my] (4.619pt,0.0pt) arc (0:60:4.619pt) arc (240:180:4.619pt) arc (0:60:4.619pt);\end{tikzpictured}%
.
However, $\bar c$ is $f_1$\dash hive preserving
and Key Lemma~\ref{keylem:ktt:secure} implies that $\bar c$ is $f_1$\dash secure,
in contradiction to $\LRC \la \mu \nu = 2$.
This excludes the cases in Figure~\ref{fig:ktt:pipes:alsoimpossible}.

Now suppose that the pipe is not thin, i.e.,
two adjacent pipe triangles are not direct successors when traversing~$c_1$.
It is a simple topological fact that then
there is a rhombus $\rhc$ where $c_1$ or $c_2$ runs like $\rhpoulMlXolrWl$.
We choose $\iot\in\{1,2\}$ such that $c_\iot$ runs like $\rhpoulMlXolrWl$.
So $\rhc$ is not $f_\iot$\dash flat.
It suffices to construct the center curve $\zeta$, which contains the diagonal~$\rhsc$.
To achieve this, we show that ``$c_\iot$ cannot diverge'', which precisely means the following:
If $c_\iot$ uses
$
\begin{tikzpictured}\draw[rhrhombidraw] (0.0pt,16.0002pt) -- (-4.619pt,8.0001pt) -- (0.0pt,0.0pt) -- (4.619pt,8.0001pt) -- cycle;\draw[thin,-my] (-2.3095pt,12.0002pt) arc (240:300:4.619pt) arc (300:360:4.619pt);\draw[thin,-my] (2.3095pt,4.0001pt) arc (60:120:4.619pt);\end{tikzpictured}
$, then $c_\iot$ uses
$
\begin{tikzpictured}\draw[rhrhombidraw] (0.0pt,16.0002pt) -- (-4.619pt,8.0001pt) -- (0.0pt,0.0pt) -- (4.619pt,8.0001pt) -- cycle;\draw[thin,-my] (-2.3095pt,12.0002pt) arc (240:300:4.619pt) arc (300:360:4.619pt);\draw[thin,-my] (11.5475pt,12.0002pt) arc (120:180:4.619pt) arc (0:-60:4.619pt) arc (300:240:4.619pt) arc (60:120:4.619pt);\end{tikzpictured}
$ and $\zeta$ continues with
$
\begin{tikzpictured}\draw[rhrhombidraw] (0.0pt,16.0002pt) -- (-4.619pt,8.0001pt) -- (0.0pt,0.0pt) -- (4.619pt,8.0001pt) -- cycle;\draw[thin,-my] (-2.3095pt,12.0002pt) arc (240:300:4.619pt) arc (300:360:4.619pt);\draw[thin,-my] (11.5475pt,12.0002pt) arc (120:180:4.619pt) arc (0:-60:4.619pt) arc (300:240:4.619pt) arc (60:120:4.619pt);\draw[rhrhombithickside] (-4.619pt,8.0001pt) -- (4.619pt,8.0001pt);\draw[rhrhombithickside] (4.619pt,8.0001pt) -- (9.238pt,16.0002pt);\end{tikzpictured}
$
;
if $c_\iot$ uses
$
\begin{tikzpictured}\draw[rhrhombidraw] (0.0pt,16.0002pt) -- (-4.619pt,8.0001pt) -- (0.0pt,0.0pt) -- (4.619pt,8.0001pt) -- cycle;\draw[thin,-my] (-2.3095pt,12.0002pt) arc (240:300:4.619pt);\draw[thin,-my] (4.619pt,0.0pt) arc (0:60:4.619pt) arc (60:120:4.619pt);\end{tikzpictured}
$, then $c_\iot$ uses
$
\begin{tikzpictured}\draw[rhrhombidraw] (0.0pt,16.0002pt) -- (-4.619pt,8.0001pt) -- (0.0pt,0.0pt) -- (4.619pt,8.0001pt) -- cycle;\draw[thin,-my] (-2.3095pt,12.0002pt) arc (240:300:4.619pt) arc (120:60:4.619pt) arc (60:0:4.619pt) arc (180:240:4.619pt);\draw[thin,-my] (4.619pt,0.0pt) arc (0:60:4.619pt) arc (60:120:4.619pt);\end{tikzpictured}
$
and $\zeta$ continues with
$
\begin{tikzpictured}\draw[rhrhombidraw] (0.0pt,16.0002pt) -- (-4.619pt,8.0001pt) -- (0.0pt,0.0pt) -- (4.619pt,8.0001pt) -- cycle;\draw[thin,-my] (-2.3095pt,12.0002pt) arc (240:300:4.619pt) arc (120:60:4.619pt) arc (60:0:4.619pt) arc (180:240:4.619pt);\draw[thin,-my] (4.619pt,0.0pt) arc (0:60:4.619pt) arc (60:120:4.619pt);\draw[rhrhombithickside] (-4.619pt,8.0001pt) -- (4.619pt,8.0001pt);\draw[rhrhombithickside] (4.619pt,8.0001pt) -- (9.238pt,0.0pt);\end{tikzpictured}
$
; and also both situations rotated by $180^\circ$.

We treat only one case, the others being similar.
So let $c_\iot$ use 
$
\begin{tikzpictured}\draw[rhrhombidraw] (0.0pt,16.0002pt) -- (-4.619pt,8.0001pt) -- (0.0pt,0.0pt) -- (4.619pt,8.0001pt) -- cycle;\draw[thin,-my] (-2.3095pt,12.0002pt) arc (240:300:4.619pt) arc (300:360:4.619pt);\draw[thin,-my] (2.3095pt,4.0001pt) arc (60:120:4.619pt);\end{tikzpictured}
$.
If $\rhrholr$ is not $f_\iot$\dash flat, then
$c_\iot$ can be rerouted via
$
\begin{tikzpictured}\draw[rhrhombidraw] (0.0pt,0.0pt) -- (4.619pt,8.0001pt) -- (0.0pt,16.0002pt) -- (-4.619pt,8.0001pt) -- cycle;\draw[thin,-my] (2.3095pt,4.0001pt) arc (240:180:4.619pt) arc (180:120:4.619pt) arc (300:360:4.619pt);\end{tikzpictured}
$
to give an $f_\iot$\dash hive preserving cycle $\bar c$.
Key Lemma~\ref{keylem:ktt:secure} ensures that $\bar c$ is $f_\iot$\dash secure.
This is a contradiction to $\LRC \la \mu \nu = 2$.
Therefore, $\rhrholr$ is $f_\iot$\dash flat.
If $c_\iot$ uses
$
\begin{tikzpictured}\draw[rhrhombidraw] (0.0pt,16.0002pt) -- (-4.619pt,8.0001pt) -- (0.0pt,0.0pt) -- (4.619pt,8.0001pt) -- cycle;\draw[thin,-my] (-2.3095pt,12.0002pt) arc (240:300:4.619pt) arc (300:360:4.619pt);\draw[thin,-my] (4.619pt,0.0pt) arc (0:60:4.619pt) arc (60:120:4.619pt);\end{tikzpictured}
$, then $c_\iot$ can be rerouted via
$
\begin{tikzpictured}\draw[rhrhombidraw] (0.0pt,16.0002pt) -- (-4.619pt,8.0001pt) -- (0.0pt,0.0pt) -- (4.619pt,8.0001pt) -- cycle;\draw[thin,-my] (4.619pt,0.0pt) arc (0:60:4.619pt) arc (240:180:4.619pt) arc (180:120:4.619pt) arc (300:360:4.619pt);\end{tikzpictured}
$, again in contradiction to $\LRC \la \mu \nu = 2$.
Therefore $c_\iot$ uses
$
\begin{tikzpictured}\draw[rhrhombidraw] (0.0pt,16.0002pt) -- (-4.619pt,8.0001pt) -- (0.0pt,0.0pt) -- (4.619pt,8.0001pt) -- cycle;\draw[thin,-my] (-2.3095pt,12.0002pt) arc (240:300:4.619pt) arc (300:360:4.619pt);\draw[thin,-my] (6.9285pt,4.0001pt) arc (300:240:4.619pt) arc (60:120:4.619pt);\end{tikzpictured}
$.
Since $c_\iot$ uses $\rhpourWl$, it follows that $\rhrhur$ is not $f_\iot$\dash flat.
The hexagon equality (Claim~\ref{cla:BZ}) implies that $\rhrhlr$ is not $f_\iot$\dash flat.
%
If $\rhrhpr$ were not $f_\iot$\dash flat, then $c_\iot$ could be rerouted via
$
\begin{tikzpictured}\draw[rhrhombidraw] (0.0pt,16.0002pt) -- (-4.619pt,8.0001pt) -- (0.0pt,0.0pt) -- (4.619pt,8.0001pt) -- cycle;\draw[thin,-my] (-2.3095pt,12.0002pt) arc (240:300:4.619pt) arc (120:60:4.619pt) arc (60:0:4.619pt) arc (0:-60:4.619pt) arc (300:240:4.619pt) arc (60:120:4.619pt);\end{tikzpictured}
$, which would again result in a contradition to $\LRC \la \mu \nu =2$.
Therefore $\rhrhpr$ is $f_\iot$\dash flat.
But this implies that $c_\iot$ uses
$
\begin{tikzpictured}\draw[rhrhombidraw] (0.0pt,16.0002pt) -- (-4.619pt,8.0001pt) -- (0.0pt,0.0pt) -- (4.619pt,8.0001pt) -- cycle;\draw[thin,-my] (-2.3095pt,12.0002pt) arc (240:300:4.619pt) arc (300:360:4.619pt);\draw[thin,-my] (11.5475pt,12.0002pt) arc (120:180:4.619pt) arc (0:-60:4.619pt) arc (300:240:4.619pt) arc (60:120:4.619pt);\end{tikzpictured}
$.
\end{proof}

If the pipe is thin, then we have exactly one inner region by definition.
If the pipe is thick, then we have no inner region.
The center curve of the thick pipe is \emph{not} considered part of the pipe border.
In fact, the thick pipe is defined to have an empty inner pipe border.

\paragraph{Flatspace sides and the Rerouting Theorem}
We will need a special version of the fundamental Rerouting Theorem, Thm.~4.19 in~\cite{bi:12}.
For this reason we introduce some terminology.

Fix a flow $f \in P$.
An \emph{$f$\dash flatspace side} is defined to be a side of an $f$\dash flatspace,
defined in \cite{bi:12}. But we can equivalently define $f$\dash flatspace sides as follows.

An \emph{$f$\dash flatspace side} $a$ is a maximal line segment
consisting of edges in $\Delta$ such that
\begin{compactenum}
\item each edge $\rhsc$ of~$a$ is the diagonal of a non\dash$f$\dash flat rhombus~$\rhc$.
\item For two successive edges
\begin{tikzpictured}\fill[white] (0.0pt,0.0pt) circle (0.1pt);\fill[white] (9.238pt,16.0002pt) circle (0.1pt);\draw[rhrhombidraw] (-4.619pt,8.0001pt) -- (13.857pt,8.0001pt) ;\fill (-4.619pt,8.0001pt) circle (1pt);\fill (4.619pt,8.0001pt) circle (1pt);\fill (13.857pt,8.0001pt) circle (1pt);\end{tikzpictured}
\ of~$a$
all four rhombi contained in the two incident trapezoids
\begin{tikzpictured}\fill[white] (0.0pt,0.0pt) circle (0.1pt);\fill (-4.619pt,8.0001pt) circle (1pt);\fill (4.619pt,8.0001pt) circle (1pt);\fill (13.857pt,8.0001pt) circle (1pt);\draw[rhrhombidraw] (-4.619pt,8.0001pt) -- (0.0pt,16.0002pt) -- (9.238pt,16.0002pt) -- (13.857pt,8.0001pt) -- cycle;\end{tikzpictured}
\ and
\begin{tikzpictured}\fill (-4.619pt,8.0001pt) circle (1pt);\fill (4.619pt,8.0001pt) circle (1pt);\fill (13.857pt,8.0001pt) circle (1pt);\draw[rhrhombidraw] (-4.619pt,8.0001pt) -- (0.0pt,0.0pt) -- (9.238pt,0.0pt) -- (13.857pt,8.0001pt) -- cycle;\fill[white] (9.238pt,16.0002pt) circle (0.1pt);\end{tikzpictured}
\ are $f$\dash flat.
\end{compactenum}

The pipe has \emph{sides}, which are defined to be maximal line segments of the pipe border.
\emph{Inner pipe sides} are contained in the inner pipe border,
while \emph{outer pipe sides} are contained in the outer pipe border.
\begin{claim}
\label{cla:flatspacesides}
All outer pipe sides can be partitioned into $f_1$\dash flatspace sides
and all inner pipe sides can be partitioned into $f_2$\dash flatspace sides.
\end{claim}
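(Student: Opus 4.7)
The first defining condition of a flatspace side is immediate from Claim~\ref{cla:ktt:pipeborder}: every edge on the outer pipe border is the diagonal of a rhombus whose two constituent hive triangles lie one in the pipe and one in the outer region, hence such a rhombus is not $f_1$-flat. So the task reduces to verifying the second condition at every interior vertex of every outer pipe side, and analogously with $f_2$ at every interior vertex of every inner pipe side.

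My plan for condition (2) is a local rerouting argument in the spirit of the proof of Claim~\ref{cla:ktt:twopipes}. Fix an interior vertex $v$ of an outer pipe side; two collinear edges of the outer border meet at $v$, which determines how $c_1$ locally enters and leaves the two pipe triangles incident to the two edges. I want to show that each of the four rhombi contained in the two trapezoids on either side of the pair of edges (two rhombi in the trapezoid reaching into the pipe, two in the trapezoid reaching into the outer region) is $f_1$-flat. Suppose, for contradiction, that one of these four rhombi is not $f_1$-flat. Then I can reroute $c_1$ inside $\resf{f_1}$ through that rhombus to obtain a proper cycle $\bar c \neq c_1$ whose induced flow still lies in $P(\la,\mu,\nu)$. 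By Key Lemma~\ref{keylem:ktt:secure}, $\bar c$ is $f_1$-secure; hence by Proposition~\ref{pro:secureneighbors}, $f_1+\bar c \in P_\Z$ is a neighbor of $f_1$ distinct from $f_2$, contradicting $\LRC \la \mu \nu = 2$.

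The work lies in setting up the rerouting concretely. For each rhombus in the four-rhombi configuration around $v$, one identifies a small local perturbation of the turnpath of $c_1$: either a swap at a single hive triangle analogous to the moves $\rhpoulMrr \leftrightarrow \rhpoulMrl$ used throughout Section~\ref{sec:ktt}, or a hexagonal swap across two adjacent rhombi. To ensure that each such rerouting really lives in $\resf{f_1}$ (i.e.\ avoids the turnvertices and turnedges deleted at acute angles of $f_1$-flat rhombi), one uses the hexagon equality (Claim~\ref{cla:BZ}) together with the previously established flatness information: from Claim~\ref{cla:ktt:pipeborder} the boundary rhombi are non-flat, and propagating through the hexagon equality lets one conclude that the inner rhombi of each trapezoid are flat iff the outer ones are, so that flatness in the pipe-side trapezoid is inherited by the outer-side trapezoid and vice versa. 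The case distinction follows Definition~\ref{def:ktt:pipetypes}: in a thin pipe two consecutive collinear outer edges arise only from specific alternations of upright and downright pipe triangles, while in a thick pipe the outer border runs around the thick body and each local configuration near $v$ is similarly tabulated.

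The inner-pipe-side part is handled by the symmetric argument, with $c_1, f_1$ replaced by $c_2, f_2$, using the second assertion of Claim~\ref{cla:ktt:pipeborder} together with the $\iot=2$ case of Key Lemma~\ref{keylem:ktt:secure}. Note that by Definition~\ref{def:ktt:pipetypes}, thick pipes have an empty inner pipe border, so this part of the claim is vacuous for thick pipes and only substantive for thin pipes (which enclose exactly one inner region). The main obstacle I anticipate is the tedious enumeration of the local configurations of $c_1$ (respectively $c_2$) at a vertex on a straight segment, and checking in each configuration that every candidate rerouting is available in $\resf{f_1}$ (respectively $\resf{f_2}$); modulo this case analysis, the rerouting-plus-Key-Lemma-plus-$\LRC\la\mu\nu=2$ contradiction scheme is entirely standard within this paper.
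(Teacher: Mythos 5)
There is a genuine gap here: you have misidentified the proof obligation. The statement that a pipe side $a$ can be \emph{partitioned} into $f_1$\dash flatspace sides does not require condition~(2) to hold at the interior vertices of $a$ --- if condition~(2) fails at some interior vertex, the partition simply consists of several shorter flatspace sides, which is perfectly consistent with the claim. What the claim \emph{does} require, and what your proposal never addresses, is that the maximal flatspace sides containing edges of $a$ do not extend \emph{beyond the two endpoints} of $a$. That is the entire content of the paper's proof: by Claim~\ref{cla:ktt:pipeborder} every edge of $a$ is the diagonal of a non\dash$f_1$\dash flat rhombus (condition~(1)), and at each endpoint of $a$ the pipe border turns a corner, so the turning edge --- which is one of the four rhombus diagonals incident to that endpoint vertex --- is also a non\dash$f_1$\dash flat diagonal by the same claim; hence condition~(2) fails at the endpoint vertex and no flatspace side can continue past $a$. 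No rerouting, no Key Lemma, and no case analysis on thin versus thick pipes is needed.

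Worse, the statement you set out to prove at interior vertices is almost certainly false, so the rerouting-plus-$\LRC\la\mu\nu=2$ contradiction you sketch cannot go through. At an interior vertex $v$ of an outer pipe side, two of the four rhombi in question point away from the pipe and consist entirely of hive triangles in the outer region; on such rhombi $f_1$, $f_2$ and every convex combination $\xi$ have the same slack, and nothing in the hypothesis $\LRC\la\mu\nu=2$ forces that slack to vanish (e.g.\ rhombi near the border of $\Delta$ have slacks dictated by the differences $\la_i-\la_{i+1}$, etc.). A non\dash$f_1$\dash flat rhombus there does not yield any new $f_1$\dash hive preserving cycle --- you cannot reroute $c_1$ ``through'' a non-flat rhombus that is not adjacent to the pipe in the right way --- it merely splits $a$ into two flatspace sides at $v$. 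So your reduction is not only unnecessary but points the argument in a direction that would fail.
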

\begin{proof}
Claim~\ref{cla:ktt:pipeborder} implies that
all edges of outer pipe sides belong to $f_1$\dash flatspace sides.
We now prove that the $f_1$\dash flatspace sides do not exceed the pipe sides.
This is easy to see by looking at the following example, which represents the general case.
Consider $c_1$ and the pipe side $a$:
\begin{tikzpicture}\draw[rhrhombidraw] (-4.619pt,8.0001pt) -- (13.857pt,8.0001pt) ;\draw[thin,-my] (18.476pt,8.0001pt) arc (0:-60:4.619pt);\draw[thin,-my] (16.1665pt,4.0001pt) arc (300:240:4.619pt);\draw[thin,-my] (11.5475pt,4.0001pt) arc (60:120:4.619pt);\draw[thin,-my] (6.9285pt,4.0001pt) arc (300:240:4.619pt);\draw[thin,-my] (2.3095pt,4.0001pt) arc (60:120:4.619pt);\draw[thin,-my] (-2.3095pt,4.0001pt) arc (120:180:4.619pt);\end{tikzpicture}%
.
Then the following edges are diagonals of non\dash$f_1$\dash flat rhombi:
\begin{tikzpicture}\draw[rhrhombidraw] (-4.619pt,8.0001pt) -- (13.857pt,8.0001pt) ;\draw[thin,-my] (18.476pt,8.0001pt) arc (0:-60:4.619pt);\draw[thin,-my] (16.1665pt,4.0001pt) arc (300:240:4.619pt);\draw[thin,-my] (11.5475pt,4.0001pt) arc (60:120:4.619pt);\draw[thin,-my] (6.9285pt,4.0001pt) arc (300:240:4.619pt);\draw[thin,-my] (2.3095pt,4.0001pt) arc (60:120:4.619pt);\draw[thin,-my] (-2.3095pt,4.0001pt) arc (120:180:4.619pt);\draw[rhrhombidraw] (13.857pt,8.0001pt) -- (18.476pt,0.0pt) ;\draw[rhrhombidraw] (-4.619pt,8.0001pt) -- (-9.238pt,0.0pt) ;\end{tikzpicture}%
. This proves that $a$ can be partitioned into $f_1$\dash flatspace sides.
\end{proof}
The following result is a direct corollary of \cite[Thm.~4.19]{bi:12}.
\begin{proposition}
\label{pro:rerouting}
If an $f$\dash hive preserving flow~$d$ with zero throughput on the border of~$\Delta$
has nonzero throughput through an edge of an $f$\dash flatspace side~$a$,
then there exists a turncycle~$\bar c$ in $\resf f$ that crosses~$a$.
\end{proposition}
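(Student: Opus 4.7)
The plan is to invoke the Rerouting Theorem (Thm.~4.19 in \cite{bi:12}) almost directly, and show that the extra condition about the flatspace side $a$ forces any decomposition to contain a turncycle that crosses $a$.

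First, I would recall from \cite{bi:12} that for an $f$-hive preserving flow $d$ with zero throughput on the border of $\Delta$, the Rerouting Theorem produces a decomposition of $d$ (in terms of throughputs) as a nonnegative combination of turncycles in $\resf f$. That is, there exist turncycles $\bar c_1,\ldots,\bar c_s$ in $\resf f$ and nonnegative coefficients $\alpha_1,\ldots,\alpha_s$ such that for every edge $k \in E(\Delta)$ one has $\delta(k,d) = \sum_i \alpha_i \, \rhacW(\bar c_i)$ (where the throughput signs are read in the same reference direction). This is the content, in our notation, of the Rerouting Theorem applied to $d$.

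Next I would fix an edge $k$ of the flatspace side $a$ with $\delta(k,d) \neq 0$, which exists by hypothesis. Plugging $k$ into the decomposition identity, the equation $\delta(k,d) = \sum_i \alpha_i \, \rhacW(\bar c_i)$ forces at least one index $i$ with $\alpha_i > 0$ and $\rhacW(\bar c_i) \neq 0$. By Definition~\ref{def:newthroughput}, a nonzero throughput of a turncycle through $k$ means the turncycle uses a turnvertex incident to $k$, i.e., the turncycle $\bar c := \bar c_i$ crosses $a$ at the edge $k$. This yields the desired turncycle in $\resf f$.

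The only subtle point is to confirm that Thm.~4.19 of \cite{bi:12} applies in the present situation, i.e., that an $f$-hive preserving flow with zero border throughput really admits such a decomposition into turncycles of $\resf f$ (and not just of the larger graph $\res$). The main obstacle I would expect is bookkeeping: the flatspace side $a$ sits in a strip of $f$-flat rhombi bounded by non-$f$-flat rhombi on both ends, and one has to check that none of the deleted turnedges/turnvertices of Definition~\ref{def:resf} are needed to represent $d$ near $a$. This is exactly what the Rerouting Theorem of \cite{bi:12} guarantees, so once we appeal to it the corollary is immediate.
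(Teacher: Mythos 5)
Your overall strategy --- reduce the proposition to Theorem~4.19 of \cite{bi:12} --- is the same as the paper's, which offers no argument beyond calling the proposition ``a direct corollary'' of that theorem. The gap lies in what you claim Theorem~4.19 provides. The Rerouting Theorem is not a decomposition theorem: it does not assert that an $f$\dash hive preserving flow~$d$ with zero border throughput can be written, throughput\dash wise, as a nonnegative combination of turncycles of $\resf f$. Its conclusion is already exactly the conclusion you are trying to prove: under the hypotheses that $d$ is $f$\dash hive preserving, has zero throughput on the border of~$\Delta$, and has nonzero throughput through a side of an $f$\dash flatspace, there exists a turncycle in $\resf f$ crossing that side. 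Accordingly, the only work the paper needs to do is the definitional translation it carries out immediately before the proposition (checking that its ``$f$\dash flatspace sides'' agree with sides of $f$\dash flatspaces in the sense of \cite{bi:12}); your write-up skips that translation and instead inserts an intermediate decomposition claim that is established nowhere.

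That intermediate claim is genuinely nontrivial and cannot be waved through. A standard cycle decomposition of $d$ on $G$ produces cycles that need not individually be $f$\dash hive preserving: two cycles of the decomposition may use, respectively, a negative and a positive slack contribution in the same $f$\dash flat rhombus, cancelling in $d$ but not cycle by cycle, and such a cycle does not lift to $\resf f$, where precisely those negative contributions have been deleted. Producing from $d$ even a single turncycle of $\resf f$ with prescribed nonzero throughput is essentially the content of the constructive proof of the Rerouting Theorem in \cite{bi:12}, which builds the crossing turncycle step by step using the flatness structure along the flatspace side; assuming a full nonnegative decomposition into turncycles of $\resf f$ assumes something at least as strong. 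Either cite Theorem~4.19 for its actual conclusion (after matching the definitions of flatspace sides, as the paper does), or supply a proof of the decomposition you assert.
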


\begin{proof}[Proof of Proposition~\protect\ref{pro:ktt:goal}]
\textbf{Case 1:} We first analyze $\xi$\dash hive preserving cycles that use pipe triangles only.
If the pipe is \emph{thin},
we note that a cycle that uses only pipe triangles necessarily equals $c_1$ or~$c_2$.
Since $c_\iot$ is $f_\iot$\dash hive preserving, the assertion follows.

Now assume that the pipe is \emph{thick} and
assume by way of contradiction that
there is a $\xi$\dash hive preserving cycle $c$ that uses only pipe triangles
but is neither $f_1$\dash hive preserving nor $f_2$\dash hive preserving.
Hence there is a rhombus $\varrho_1$ such that $\s {\varrho_1} {f_1}=0$ and $\s {\varrho_1} {c}<0$.
If we had $\s {\varrho_1} {f_2} = 0$, then $\s {\varrho_1} \xi = 0$, which contradicts $\s {\varrho_1} c <0$
as $c$ is $\xi$\dash hive preserving.
Therefore $\s {\varrho_1} {f_2} > 0$.
Similarly,
by our assumption, there is also
a rhombus $\varrho_2$ with $\s {\varrho_2} {f_2}=0$ and $\s {\varrho_2} {c}<0$.
We know that $c$ must use a negative contribution in $\varrho_1$ and in~$\varrho_2$.
We can now analyze all positions in which $\varrho_1$ and $\varrho_2$ can lie in the thick pipe
and after a detailed but rather straightforward case distinction,
which we omit here, we end up with a contradiction to $\LRC\la\mu\nu=2$.

\textbf{Case 2:} For the sake of contradiction,
we now assume the existence of a $\xi$\dash hive preserving cycle $c$ that does not use pipe triangles only.
Claim~\ref{cla:ktt:mustusepipe} implies that $c$ uses at least one pipe triangle.
Since $c$ does not use pipe triangles only, it follows that
$c$ crosses the pipe border.
Use Claim~\ref{cla:flatspacesides} and
choose $\iot \in \{1,2\}$ such that $c$ crosses the pipe border through an $f_\iot$\dash flatspace side~$a$.
Choose $\tilde x \in \{x,1-x\}$ such that $\xi = f_\iot + \tilde x c_\iot$.
We have $f_\iot+\tilde x c_\iot+\varepsilon c \in P$ for a small $\varepsilon>0$
and hence $d:=\tilde x c_\iot+\varepsilon c$ is $f_\iot$\dash hive preserving.
Proposition~\ref{pro:rerouting} applied to $f_\iot$ and $d$ ensures
the existence of a turncycle in $\resf {f_\iot}$ that crosses the side~$a$.
According to Claim~\ref{cla:flatspacesides}, $a$ is contained in the pipe border.
Let $\bar c$ be a shortest turncycle in $\resf {f_\iot}$ that crosses the pipe border.
According to Proposition~\ref{pro:noreverse} (for turnpaths of length~0)
and Proposition~\ref{pro:special}, $\bar c$ uses no reverse turnvertices
and all self\dash intersections of the curve of $\bar c$ can only happen in $(\bar c,f_\iot)$\dash special rhombi,
defined as follows:

For a turncycle $c$ and a flow $f$ we call a rhombus $\rhc$ \emph{$(c,f)$\dash special}, if
(1)~$\rhc$ is $f$\dash flat and (2)~the diagonal of $\rhc$ is crossed by $c$ via
\begin{tikzpictured}\draw[rhrhombidraw] (0.0pt,0.0pt) -- (-4.619pt,8.0001pt) -- (0.0pt,16.0002pt) -- (4.619pt,8.0001pt) -- cycle;\draw[thin,-my] (-2.3095pt,4.0001pt) --  (2.3095pt,12.0002pt);;\draw[thin,-my] (2.3095pt,4.0001pt) --  (-2.3095pt,12.0002pt);;\end{tikzpictured}%
,
\begin{tikzpictured}\draw[rhrhombidraw] (0.0pt,0.0pt) -- (-4.619pt,8.0001pt) -- (0.0pt,16.0002pt) -- (4.619pt,8.0001pt) -- cycle;\draw[thin,-my] (2.3095pt,4.0001pt) --  (-2.3095pt,12.0002pt);;\draw[thin,-my] (2.3095pt,12.0002pt) --  (-2.3095pt,4.0001pt);;\end{tikzpictured}%
,
\begin{tikzpictured}\draw[rhrhombidraw] (0.0pt,0.0pt) -- (-4.619pt,8.0001pt) -- (0.0pt,16.0002pt) -- (4.619pt,8.0001pt) -- cycle;\draw[thin,-my] (2.3095pt,12.0002pt) --  (-2.3095pt,4.0001pt);;\draw[thin,-my] (-2.3095pt,12.0002pt) --  (2.3095pt,4.0001pt);;\end{tikzpictured}%
, or
\begin{tikzpictured}\draw[rhrhombidraw] (0.0pt,0.0pt) -- (-4.619pt,8.0001pt) -- (0.0pt,16.0002pt) -- (4.619pt,8.0001pt) -- cycle;\draw[thin,-my] (-2.3095pt,12.0002pt) --  (2.3095pt,4.0001pt);;\draw[thin,-my] (-2.3095pt,4.0001pt) --  (2.3095pt,12.0002pt);;\end{tikzpictured}
\ and (3)~$c$ does not use any additional turnvertex in $\rhc$.

First assume that $\bar c$ is ordinary.
But in this case, Key Lemma~\ref{keylem:ktt:secure} implies that $\bar c$ is $f_\iot$\dash secure,
in contradiction to $\LRC \la\mu\nu = 2$.

Now assume that $\bar c$ is not ordinary, i.e., that the curve of $\bar c$ has self\dash intersections.
We will refine Key Lemma~\ref{keylem:ktt:secure} to suit our needs (see Lemma~\ref{lem:ktt:moresecure} below).
To achieve this, we now precisely analyze the situation at the self\dash intersections.
We choose a $(\bar c,f_\iot)$\dash special rhombus $\rhc$ and reroute $\bar c$ in $\rhc$
to obtain a shorter turncycle $c'$ in $\resf {f_\iot}$ as follows:
\begin{align*}
 \tag{\ddag}
\scalebox{1.3}{
\mbox{
\begin{tikzpictured}\draw[rhrhombidraw] (0.0pt,0.0pt) -- (-4.619pt,8.0001pt) -- (0.0pt,16.0002pt) -- (4.619pt,8.0001pt) -- cycle;\draw[thin,-my] (-2.3095pt,4.0001pt) --  (2.3095pt,12.0002pt);;\draw[thin,-my] (2.3095pt,4.0001pt) --  (-2.3095pt,12.0002pt);;\end{tikzpictured}
$\ \rightsquigarrow \rhpollWll$
}
\hspace{0.9cm}
\mbox{
\begin{tikzpictured}\draw[rhrhombidraw] (0.0pt,0.0pt) -- (-4.619pt,8.0001pt) -- (0.0pt,16.0002pt) -- (4.619pt,8.0001pt) -- cycle;\draw[thin,-my] (2.3095pt,4.0001pt) --  (-2.3095pt,12.0002pt);;\draw[thin,-my] (2.3095pt,12.0002pt) --  (-2.3095pt,4.0001pt);;\end{tikzpictured}
$\ \rightsquigarrow \rhpourMr$
}
\hspace{0.9cm}
\mbox{
\begin{tikzpictured}\draw[rhrhombidraw] (0.0pt,0.0pt) -- (-4.619pt,8.0001pt) -- (0.0pt,16.0002pt) -- (4.619pt,8.0001pt) -- cycle;\draw[thin,-my] (2.3095pt,12.0002pt) --  (-2.3095pt,4.0001pt);;\draw[thin,-my] (-2.3095pt,12.0002pt) --  (2.3095pt,4.0001pt);;\end{tikzpictured}
$\ \rightsquigarrow \rhpourMll$
}
\hspace{0.9cm}
\mbox{
\begin{tikzpictured}\draw[rhrhombidraw] (0.0pt,0.0pt) -- (-4.619pt,8.0001pt) -- (0.0pt,16.0002pt) -- (4.619pt,8.0001pt) -- cycle;\draw[thin,-my] (-2.3095pt,12.0002pt) --  (2.3095pt,4.0001pt);;\draw[thin,-my] (-2.3095pt,4.0001pt) --  (2.3095pt,12.0002pt);;\end{tikzpictured}
$\ \rightsquigarrow \rhpollWr$
}
}
\end{align*}
Note that this rerouting is the unique way to reroute in an $f_\iot$\dash flat rhombus $\rhc$
such that the resulting turncycle uses no negative slack contribution in $\rhc$.
Because of the minimal length of~$\bar c$, the turncycle $c'$ does not cross the pipe border.
According to Claim~\ref{cla:ktt:mustusepipe}, $c'$ uses pipe triangles only.
We now show that $c'$ is ordinary.

If the pipe is thin, this is obvious.
Now assume that the pipe is thick.
For the sake of contradiction, assume that $c'$ is not ordinary.
Since $c'$ was obtained by rerouting from $\bar c$, self\dash intersections of the curve of $c'$ can only appear
in $(c',f_\iot)$\dash special rhombi.
As $c'$ uses pipe triangles only,
the diagonals of $(c',f_\iot)$\dash special rhombi are contained in the center curve,
see Figure~\ref{fig:ktt:thickpipeandspecialrhombi}.
\begin{figure}[h] 
  \begin{center}
\scalebox{1.2}{
\begin{tikzpicture}\fill[thick,fill=black!20] (-9.238pt,0.0pt) -- (-13.857pt,8.0001pt) -- (-9.238pt,16.0002pt) -- (9.238pt,16.0002pt) -- (13.857pt,24.0003pt) -- (41.571pt,24.0003pt) -- (46.19pt,16.0002pt) -- (41.571pt,8.0001pt) -- (23.095pt,8.0001pt) -- (18.476pt,0.0pt) -- cycle;\fill[thick,fill=black!40] (0.0pt,0.0pt) -- (-4.619pt,8.0001pt) -- (0.0pt,16.0002pt) -- (4.619pt,8.0001pt) -- cycle;\fill[thick,fill=black!40] (9.238pt,0.0pt) -- (4.619pt,8.0001pt) -- (9.238pt,16.0002pt) -- (13.857pt,8.0001pt) -- cycle;\fill[thick,fill=black!40] (23.095pt,8.0001pt) -- (13.857pt,8.0001pt) -- (9.238pt,16.0002pt) -- (18.476pt,16.0002pt) -- cycle;\fill[thick,fill=black!40] (23.095pt,8.0001pt) -- (18.476pt,16.0002pt) -- (23.095pt,24.0003pt) -- (27.714pt,16.0002pt) -- cycle;\fill[thick,fill=black!40] (32.333pt,8.0001pt) -- (27.714pt,16.0002pt) -- (32.333pt,24.0003pt) -- (36.952pt,16.0002pt) -- cycle;\fill (13.857pt,24.0003pt) circle (0.4pt);\fill (23.095pt,24.0003pt) circle (0.4pt);\fill (32.333pt,24.0003pt) circle (0.4pt);\fill (0.0pt,16.0002pt) circle (0.4pt);\fill (9.238pt,16.0002pt) circle (0.4pt);\fill (18.476pt,16.0002pt) circle (0.4pt);\fill (27.714pt,16.0002pt) circle (0.4pt);\fill (36.952pt,16.0002pt) circle (0.4pt);\fill (-4.619pt,8.0001pt) circle (0.4pt);\fill (4.619pt,8.0001pt) circle (0.4pt);\fill (13.857pt,8.0001pt) circle (0.4pt);\fill (23.095pt,8.0001pt) circle (0.4pt);\fill (0.0pt,0.0pt) circle (0.4pt);\fill (9.238pt,0.0pt) circle (0.4pt);\fill (18.476pt,0.0pt) circle (0.4pt);\fill (32.333pt,8.0001pt) circle (0.4pt);\draw[rhrhombidraw] (9.238pt,16.0002pt) -- (4.619pt,8.0001pt) -- (9.238pt,0.0pt) -- (13.857pt,8.0001pt) -- cycle;\draw[rhrhombidraw] (23.095pt,8.0001pt) -- (18.476pt,16.0002pt) -- (9.238pt,16.0002pt) -- (13.857pt,8.0001pt) -- cycle;\draw[rhrhombidraw] (23.095pt,8.0001pt) -- (18.476pt,16.0002pt) -- (23.095pt,24.0003pt) -- (27.714pt,16.0002pt) -- cycle;\fill (-9.238pt,0.0pt) circle (0.4pt);\fill (-13.857pt,8.0001pt) circle (0.4pt);\fill (-9.238pt,16.0002pt) circle (0.4pt);\fill (41.571pt,8.0001pt) circle (0.4pt);\fill (46.19pt,16.0002pt) circle (0.4pt);\fill (41.571pt,24.0003pt) circle (0.4pt);\draw[rhrhombidraw] (32.333pt,8.0001pt) -- (27.714pt,16.0002pt) -- (32.333pt,24.0003pt) -- (36.952pt,16.0002pt) -- cycle;\draw[rhrhombidraw] (0.0pt,0.0pt) -- (-4.619pt,8.0001pt) -- (0.0pt,16.0002pt) -- (4.619pt,8.0001pt) -- cycle;\end{tikzpicture}
}
    \caption{An example of a thick pipe with all possible special rhombi highlighted by a darker shading.}
    \nopar\label{fig:ktt:thickpipeandspecialrhombi}
  \end{center}
\end{figure}
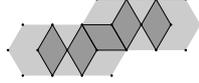
But rerouting iteratively at these rhombi finally results in an ordinary
$f_\iot$\dash hive preserving turncycle, which is shorter than~$c_\iot$.
According to Key Lemma~\ref{keylem:ktt:secure},
this turncycle is $f_\iot$\dash secure, in contradiction to $\LRC \la \mu \nu =2$.

Hence $c'$ is ordinary in both pipe cases.
Since $c'$ is $f_\iot$\dash hive preserving,
it is $f_\iot$\dash secure (Key Lemma~\ref{keylem:ktt:secure}).
The fact $\LRC\la\mu\nu=2$ implies that $c'$ coincides with $c_\iot$.
Thus $\bar c$ reroutes to $c_\iot$, no matter in which $(\bar c,f_\iot)$\dash special rhombus we reroute.


We can see that $\bar c$ is $f_\iot$\dash secure, in contradiction to $\LRC \la\mu\nu = 2$,
by using the following Lemma~\ref{lem:ktt:moresecure}.
\begin{lemma}
\label{lem:ktt:moresecure}
Let $\iot \in \{1,2\}.$
 If an $f_\iot$\dash hive preserving turncycle $c$ has no reverse turnvertices and
 all self\dash intersections of the curve of $c$
 occur in $(c,f_\iot)$\dash special rhombi in which $c$ reroutes to $c_\iot$ when applying~$(\ddag)$,
then $c$ is $f_\iot$\dash secure.
\end{lemma}
This finishes the proof of Proposition~\ref{pro:ktt:goal}.
\end{proof}
Lemma~\ref{lem:ktt:moresecure} is a refined version of Key Lemma~\ref{keylem:ktt:secure}.
We postpone its proof to Subsection~\ref{subsec:ktt:moresecure},
because it is based on ideas of the following subsection.
\subsection{Proof of Key Lemma~\protect\ref{keylem:ktt:secure}}
\label{subsec:ktt:secure}
\begin{proof}

We will show that
\begin{equation}
\tag{$\ast$}
\begin{minipage}{11cm}
for each ordinary turncycle in $\resf {f_\iot}$ that is not $f_\iot$\dash secure
there exist two distinct shorter ordinary turncycles in $\resf {f_\iot}$.
\end{minipage}
\end{equation}
If there exists an ordinary turncycle in $\resf {f_\iot}$ that is not $f_\iot$\dash secure,
then take one of minimal length. It follows from $(\ast)$
that there exist two distinct ordinary $f_\iot$\dash secure turncycles.
This is a contradiction to $\LRC \la \mu \nu = 2$.

It remains to prove $(\ast)$.
Recall that ordinary turncycles in $\res$ are in bijection to proper cycles in~$G$.
Let $c$ be an ordinary turncycle on $\resf {f_\iot}$ that is not $f_\iot$\dash secure.
By Definition~\ref{def:fhivepres} and Proposition~\ref{pro:secureneighbors}
we have $f_\iot+c\notin P$ and hence there exists
$0<\varepsilon<1$ such that $f_\iot+\varepsilon c \in P$.
According to Claim~\ref{cla:smallabsoluteslack} we have $\s \varrho c \in \{-2,-1,0,1,2\}$
for all rhombi~$\varrho$.
Hence there exists a rhombus $\varrho$ with $\s \varrho {f_\iot} = 1$ and $\s \varrho c = -2$.
We call such rhombi \emph{bad}.
Let $\rhc$ be a bad rhombus. Then $c$ uses $\rhpoulMlXolrWl$ by Proposition~\ref{pro:slackcalc}.

We begin by analyzing a very special case:
If all four rhombi $\rhrhoul$, $\rhrhour$, $\rhrholl$, and $\rhrholr$ are
not $f_\iot$\dash flat, then $c$ can be rerouted twice:
Once via $\rhpoulMrr$ and once via $\rhpolrWrr$,
which results in two ordinary turncycles in $\resf {f_\iot}$.
This proves $(\ast)$ in this special case.
In the more general case, we prove the following:
\begin{equation}
\tag{\ensuremath{\ast}\ensuremath{\ast}}
\begin{minipage}{11cm}
In each bad rhombus~$\rhc$ the ordinary turncycle~$c$ in $\resf {f_\iot}$ can be rerouted via~$\rhpoulMrr$ or
one of the three rhombi $\rhrhul$, $\rhrhpl$ or $\rhrhll$ is bad
such that $c$ uses 
\begin{tikzpictured}\draw[rhrhombidraw] (0.0pt,0.0pt) -- (4.619pt,8.0001pt) -- (0.0pt,16.0002pt) -- (-4.619pt,8.0001pt) -- cycle;\draw[thin,-my] (-4.619pt,16.0002pt) arc (180:240:4.619pt) arc (240:300:4.619pt);\draw[thin,-my] (2.3095pt,4.0001pt) arc (60:120:4.619pt) arc (300:240:4.619pt) arc (240:180:4.619pt) arc (0:60:4.619pt);\end{tikzpictured}%
,
\begin{tikzpictured}\draw[rhrhombidraw] (0.0pt,0.0pt) -- (4.619pt,8.0001pt) -- (0.0pt,16.0002pt) -- (-4.619pt,8.0001pt) -- cycle;\draw[thin,-my] (-11.5475pt,12.0002pt) arc (240:300:4.619pt) arc (120:60:4.619pt) arc (240:300:4.619pt);\draw[thin,-my] (2.3095pt,4.0001pt) arc (60:120:4.619pt) arc (300:240:4.619pt) arc (60:120:4.619pt);\end{tikzpictured}%
, or
\begin{tikzpictured}\draw[rhrhombidraw] (0.0pt,0.0pt) -- (4.619pt,8.0001pt) -- (0.0pt,16.0002pt) -- (-4.619pt,8.0001pt) -- cycle;\draw[thin,-my] (-11.5475pt,4.0001pt) arc (300:360:4.619pt) arc (180:120:4.619pt) arc (120:60:4.619pt) arc (240:300:4.619pt);\draw[thin,-my] (2.3095pt,4.0001pt) arc (60:120:4.619pt) arc (120:180:4.619pt);\end{tikzpictured}%
, respectively.
Additionally, in each bad rhombus~$\rhc$ the ordinary turncycle~$c$ in $\resf {f_\iot}$ can be rerouted via~$\rhpolrWrr$ or
one of the three rhombi $\rhrhur$, $\rhrhpr$ or $\rhrhlr$ is bad
such that $c$ uses 
\begin{tikzpictured}\draw[rhrhombidraw] (0.0pt,0.0pt) -- (4.619pt,8.0001pt) -- (0.0pt,16.0002pt) -- (-4.619pt,8.0001pt) -- cycle;\draw[thin,-my] (-2.3095pt,12.0002pt) arc (240:300:4.619pt) arc (300:360:4.619pt);\draw[thin,-my] (11.5475pt,12.0002pt) arc (120:180:4.619pt) arc (0:-60:4.619pt) arc (300:240:4.619pt) arc (60:120:4.619pt);\end{tikzpictured}%
, 
\begin{tikzpictured}\draw[rhrhombidraw] (0.0pt,0.0pt) -- (4.619pt,8.0001pt) -- (0.0pt,16.0002pt) -- (-4.619pt,8.0001pt) -- cycle;\draw[thin,-my] (-2.3095pt,12.0002pt) arc (240:300:4.619pt) arc (120:60:4.619pt) arc (240:300:4.619pt);\draw[thin,-my] (11.5475pt,4.0001pt) arc (60:120:4.619pt) arc (300:240:4.619pt) arc (60:120:4.619pt);\end{tikzpictured}%
, or 
\begin{tikzpictured}\draw[rhrhombidraw] (0.0pt,0.0pt) -- (4.619pt,8.0001pt) -- (0.0pt,16.0002pt) -- (-4.619pt,8.0001pt) -- cycle;\draw[thin,-my] (-2.3095pt,12.0002pt) arc (240:300:4.619pt) arc (120:60:4.619pt) arc (60:0:4.619pt) arc (180:240:4.619pt);\draw[thin,-my] (4.619pt,0.0pt) arc (0:60:4.619pt) arc (60:120:4.619pt);\end{tikzpictured}%
, respectively.
\end{minipage}
\end{equation}
We first show that (\ensuremath{\ast}\ensuremath{\ast}) implies $(\ast)$.
According to (\ensuremath{\ast}\ensuremath{\ast}),
each bad rhombus that cannot be rerouted at the left
has another bad rhombus located at its left, and analogously for its right side.
We continue finding bad rhombi in this manner and obtain a set of adjacent bad rhombi,
which we call the \emph{chain}.
The chain has two endings at which $c$ can be rerouted to shorter ordinary turncycles.
Hence $(\ast)$ follows.

We now show that (\ensuremath{\ast}\ensuremath{\ast}) holds.
First, we precisely characterize the situations in which $c$
can be rerouted in $\resf {f_\iot}$ via $\rhpoulMrr$:
This is exactly the case when
\begin{center}
both \quad $\Big($ $\rhrhoul$ is not $f_\iot$\dash flat or $c$ uses $\rhppulWl$ $\Big)$\quad and \quad
$\Big($ $\rhrholl$ is not $f_\iot$\dash flat or $c$ uses $\rhpollMl$ $\Big)$.
\end{center}
Now assume that $c$ cannot be rerouted via $\rhpoulMrr$, i.e.,
\begin{center}
\quad $\Big($ $\rhrhoul$ is $f_\iot$\dash flat and $c$ uses $\rhpulWr$ $\Big)$\quad or \quad
$\Big($ $\rhrholl$ is $f_\iot$\dash flat and $c$ uses $\rhpollMr$ $\Big)$.
\end{center}
We demonstrate how to prove (\ensuremath{\ast}\ensuremath{\ast})
in the following exemplary case, all others being similar:
Let $\rhrhoul$ be $f_\iot$\dash flat with $c$ using $\rhpulWr$
and let $\rhrholl$ be $f_\iot$\dash flat with $c$ using $\rhpollMl$.
The hexagon equality (Claim~\ref{cla:BZ}) applied twice
implies that we have either
$\s \rhrhpl {f_\iot} = 0$, $\s \rhrhll {f_\iot} = 1$, $\s \rhrhul {f_\iot} = 1$
or
$\s \rhrhpl {f_\iot} = 1$, $\s \rhrhll {f_\iot} = 0$, $\s \rhrhul {f_\iot} = 0$.
The latter is impossible, because $c$ uses $\rhpollMl$.
The fact that $c$ uses no negative contributions in $f_\iot$\dash flat rhombi
and that $c$ is ordinary leads to $c$ running as desired:
\begin{tikzpictured}\draw[rhrhombidraw] (0.0pt,0.0pt) -- (4.619pt,8.0001pt) -- (0.0pt,16.0002pt) -- (-4.619pt,8.0001pt) -- cycle;\draw[thin,-my] (-11.5475pt,4.0001pt) arc (300:360:4.619pt) arc (180:120:4.619pt) arc (120:60:4.619pt) arc (240:300:4.619pt);\draw[thin,-my] (2.3095pt,4.0001pt) arc (60:120:4.619pt) arc (120:180:4.619pt);\end{tikzpictured}
\ with $\rhrhll$ being bad.
All other cases are similar.
\end{proof}

\subsection{Proof of Lemma~\protect\ref{lem:ktt:moresecure}}
\label{subsec:ktt:moresecure}
We now complete the proof of Proposition~\ref{pro:ktt:goal} by proving Lemma~\ref{lem:ktt:moresecure}.
\begin{proof}
The proof is completely analogous to the proof of Key Lemma~\ref{keylem:ktt:secure}.
We only highlight the technical differences here.
Since, in contrast to Key Lemma~\ref{keylem:ktt:secure}, we are not dealing with ordinary turncycles only,
we make the following definition.
\begin{definition}
Let $\iot \in \{1,2\}$.
 If an $f_\iot$\dash hive preserving turncycle $c$ has no reverse turnvertices and
 all self\dash intersections of the curve of $c$
 occur in $(c,f_\iot)$\dash special rhombi in which $c$ reroutes to $c_\iot$ when applying~$(\ddag)$,
then $c$ is called \emph{almost ordinary}.
\end{definition}
Note that this notion depends on~$\iot$, which we think of being fixed in the following.
In analogy to Key Lemma~\ref{keylem:ktt:secure}, Lemma~\ref{lem:ktt:moresecure} now reads as follows:
\begin{center}
Every almost ordinary turncycle is $f_\iot$\dash secure.
\end{center}
We will show the following statement:
\begin{equation}
\tag{$\ast$'}
\begin{minipage}{11cm}
For each almost ordinary turncycle in $\resf {f_\iot}$ that is not $f_\iot$\dash secure
there exist two distinct shorter almost ordinary turncycles in $\resf {f_\iot}$.
\end{minipage}
\end{equation}
As in the proof of Key Lemma~\ref{keylem:ktt:secure},
in order to prove Lemma~\ref{lem:ktt:moresecure} it suffices to show ($\ast$').
So fix an almost ordinary turncycle~$c$.
First of all, since $c$ is not necessarily ordinary, we have $\s \rhc {c} \in \{-4,-3,\ldots,3,4\}$ for all rhombi $\rhc$.
But we show now that $\s \rhc {c} \geq -2$ for all rhombi $\rhc$.

We begin by showing that there is no rhombus~$\rhc$ with $\s \rhc {c} = -4$.
Recall that $\s \rhc {c} = \rhaourM(c)+\rhaollW(c)$.
If $\s \rhc {c} = -4$, then
both $\rhrhour$ and $\rhrholl$ are $(c,f_\iot)$\dash special.
But since $\s \rhc {c} = \rhaoulW(c)+\rhaolrM(c)$,
it follows that $\rhrhoul$ and $\rhrholr$ are $(c,f_\iot)$\dash special as well.
This is a contradiction to the fact that $(c,f_\iot)$\dash special rhombi do not overlap
(true by definition, cp.~Proposition~\ref{pro:special}(\ref{pro:special:nooverlap})).
Hence $\s \rhc {c} \geq -3$ for all rhombi $\rhc$.

We show next that $\s \rhc {c} \neq -3$.
Assume the contrary.
W.l.o.g.\ let $\rhaourM(c)=-2$ and $\rhaollW(c)=-1$, the other case being the same, just rotated by $180^\circ$.
Then $c$ is bound to use
$%
\begin{tikzpictured}\draw[rhrhombidraw] (0.0pt,0.0pt) -- (4.619pt,8.0001pt) -- (0.0pt,16.0002pt) -- (-4.619pt,8.0001pt) -- cycle;\draw[thin,-my] (0.0pt,8.0001pt) --  (4.619pt,16.0002pt);;\draw[thin,-my] (-2.3095pt,12.0002pt) --  (6.9285pt,12.0002pt);;\draw[thin,-my] (4.619pt,0.0pt) --  (0.0pt,8.0001pt);;\draw[thin,-my] (6.9285pt,4.0001pt) --  (-2.3095pt,4.0001pt);;\end{tikzpictured}%
$.
But, according to our assumption that in $(c,f_\iot)$\dash special rhombi $c$ reroutes to $c_\iot$,  this means that $c_\iot$ uses
$%
\begin{tikzpictured}\draw[rhrhombidraw] (0.0pt,0.0pt) -- (4.619pt,8.0001pt) -- (0.0pt,16.0002pt) -- (-4.619pt,8.0001pt) -- cycle;\draw[thin,-my] (-2.3095pt,12.0002pt) arc (240:300:4.619pt) arc (300:360:4.619pt);\draw[thin,-my] (4.619pt,0.0pt) arc (0:60:4.619pt) arc (60:120:4.619pt);\end{tikzpictured}%
$, which is a contradiction to Claim~\ref{cla:ktt:twopipes}.

So it follows that $\s \rhc {c} \geq -2$ for all rhombi.
This is exactly the same situation as in Key Lemma~\ref{keylem:ktt:secure}.
As in the proof of Key Lemma~\ref{keylem:ktt:secure},
we call rhombi $\varrho$ \emph{bad} if
$\s \varrho {c} = -2$ and $\s \varrho {f_\iot} = 1$
There exists a bad rhombus $\rhc$,
since $c$ is assumed to be not $f_\iot$\dash secure.
But here is a technical difference:
Unlike in Key Lemma~\ref{keylem:ktt:secure},
$c$~does not necessarily use exactly the turnvertices $\rhpoulMlXolrWl$ in $\rhc$, but
$c$ uses exactly one of the following sets of turnvertices in~$\rhc$:
$\rhpoulMlXolrWl$,
\begin{tikzpictured}\draw[rhrhombidraw] (0.0pt,0.0pt) -- (4.619pt,8.0001pt) -- (0.0pt,16.0002pt) -- (-4.619pt,8.0001pt) -- cycle;\draw[thin,-my] (-6.9285pt,12.0002pt) --  (2.3095pt,12.0002pt);;\draw[thin,-my] (-4.619pt,16.0002pt) --  (0.0pt,8.0001pt);;\draw[thin,-my] (0.0pt,8.0001pt) arc (0:-60:4.619pt);\end{tikzpictured}%
,
\begin{tikzpictured}\draw[rhrhombidraw] (0.0pt,0.0pt) -- (4.619pt,8.0001pt) -- (0.0pt,16.0002pt) -- (-4.619pt,8.0001pt) -- cycle;\draw[thin,-my] (0.0pt,8.0001pt) --  (4.619pt,16.0002pt);;\draw[thin,-my] (-2.3095pt,12.0002pt) --  (6.9285pt,12.0002pt);;\draw[thin,-my] (2.3095pt,4.0001pt) arc (240:180:4.619pt);\end{tikzpictured}%
,
\begin{tikzpictured}\draw[rhrhombidraw] (0.0pt,0.0pt) -- (4.619pt,8.0001pt) -- (0.0pt,16.0002pt) -- (-4.619pt,8.0001pt) -- cycle;\draw[thin,-my] (4.619pt,0.0pt) --  (0.0pt,8.0001pt);;\draw[thin,-my] (6.9285pt,4.0001pt) --  (-2.3095pt,4.0001pt);;\draw[thin,-my] (0.0pt,8.0001pt) arc (180:120:4.619pt);\end{tikzpictured}%
,
\begin{tikzpictured}\draw[rhrhombidraw] (0.0pt,0.0pt) -- (4.619pt,8.0001pt) -- (0.0pt,16.0002pt) -- (-4.619pt,8.0001pt) -- cycle;\draw[thin,-my] (2.3095pt,4.0001pt) --  (-6.9285pt,4.0001pt);;\draw[thin,-my] (0.0pt,8.0001pt) --  (-4.619pt,0.0pt);;\draw[thin,-my] (-2.3095pt,12.0002pt) arc (60:0:4.619pt);\end{tikzpictured}%
,
\begin{tikzpictured}\draw[rhrhombidraw] (0.0pt,0.0pt) -- (4.619pt,8.0001pt) -- (0.0pt,16.0002pt) -- (-4.619pt,8.0001pt) -- cycle;\draw[thin,-my] (-4.619pt,16.0002pt) --  (0.0pt,8.0001pt);;\draw[thin,-my] (-6.9285pt,12.0002pt) --  (2.3095pt,12.0002pt);;\draw[thin,-my] (6.9285pt,4.0001pt) --  (-2.3095pt,4.0001pt);;\draw[thin,-my] (0.0pt,8.0001pt) --  (4.619pt,0.0pt);;\end{tikzpictured}%
,
\begin{tikzpictured}\draw[rhrhombidraw] (0.0pt,0.0pt) -- (4.619pt,8.0001pt) -- (0.0pt,16.0002pt) -- (-4.619pt,8.0001pt) -- cycle;\draw[thin,-my] (0.0pt,8.0001pt) --  (4.619pt,16.0002pt);;\draw[thin,-my] (-2.3095pt,12.0002pt) --  (6.9285pt,12.0002pt);;\draw[thin,-my] (-4.619pt,0.0pt) --  (0.0pt,8.0001pt);;\draw[thin,-my] (2.3095pt,4.0001pt) --  (-6.9285pt,4.0001pt);;\end{tikzpictured}%
,
\begin{tikzpictured}\draw[rhrhombidraw] (0.0pt,0.0pt) -- (4.619pt,8.0001pt) -- (0.0pt,16.0002pt) -- (-4.619pt,8.0001pt) -- cycle;\draw[thin,-my] (0.0pt,8.0001pt) --  (-4.619pt,16.0002pt);;\draw[thin,-my] (-6.9285pt,12.0002pt) --  (2.3095pt,12.0002pt);;\draw[thin,-my] (4.619pt,0.0pt) --  (0.0pt,8.0001pt);;\draw[thin,-my] (6.9285pt,4.0001pt) --  (-2.3095pt,4.0001pt);;\end{tikzpictured}%
, or
\begin{tikzpictured}\draw[rhrhombidraw] (0.0pt,0.0pt) -- (4.619pt,8.0001pt) -- (0.0pt,16.0002pt) -- (-4.619pt,8.0001pt) -- cycle;\draw[thin,-my] (4.619pt,16.0002pt) --  (0.0pt,8.0001pt);;\draw[thin,-my] (-2.3095pt,12.0002pt) --  (6.9285pt,12.0002pt);;\draw[thin,-my] (0.0pt,8.0001pt) --  (-4.619pt,0.0pt);;\draw[thin,-my] (2.3095pt,4.0001pt) --  (-6.9285pt,4.0001pt);;\end{tikzpictured}%
.

We now show that only the first 5 cases can appear,
because the last 4 cases cases are in contradiction to the fact that $c$
reroutes to~$c_\iot$ in $(c,f_\iot)$\dash special rhombi:
In the case where $c$ uses
\begin{tikzpictured}\draw[rhrhombidraw] (0.0pt,0.0pt) -- (4.619pt,8.0001pt) -- (0.0pt,16.0002pt) -- (-4.619pt,8.0001pt) -- cycle;\draw[thin,-my] (-4.619pt,16.0002pt) --  (0.0pt,8.0001pt);;\draw[thin,-my] (-6.9285pt,12.0002pt) --  (2.3095pt,12.0002pt);;\draw[thin,-my] (6.9285pt,4.0001pt) --  (-2.3095pt,4.0001pt);;\draw[thin,-my] (0.0pt,8.0001pt) --  (4.619pt,0.0pt);;\end{tikzpictured}%
, then the cycle $c_\iot$ on~$G$ uses
\begin{tikzpictured}\draw[rhrhombidraw] (0.0pt,0.0pt) -- (4.619pt,8.0001pt) -- (0.0pt,16.0002pt) -- (-4.619pt,8.0001pt) -- cycle;\draw[thin,-my] (-4.619pt,16.0002pt) arc (180:240:4.619pt) arc (240:300:4.619pt);\draw[thin,-my] (0.0pt,8.0001pt) arc (0:-60:4.619pt);\end{tikzpictured}%
, which is impossible. The other three cases are treated similarly.

We want to prove that there exists a chain of adjacent bad rhombi as we did in Key Lemma~\ref{keylem:ktt:secure}.
Analogously to Key Lemma~\ref{keylem:ktt:secure}, we can prove the following statement, which is more technical than (\ensuremath{\ast}\ensuremath{\ast}):
\begin{equation}
\tag{\ensuremath{\ast}\ensuremath{\ast}'}
\begin{minipage}{11cm}
\textbf{Case $\rhpoulMlXolrWl$:}
For each bad rhombus~$\rhc$ where the turncycle~$c$ in $\resf {f_\iot}$ uses only the turnvertices $\rhpoulMlXolrWl$,
one of the following holds:
(1) $c$ can be rerouted via~$\rhpoulMrr$,
\begin{tikzpictured}\draw[rhrhombidraw] (0.0pt,0.0pt) -- (4.619pt,8.0001pt) -- (0.0pt,16.0002pt) -- (-4.619pt,8.0001pt) -- cycle;\draw[thin,-my] (-11.5475pt,4.0001pt) arc (300:360:4.619pt) arc (0:60:4.619pt);\end{tikzpictured}
,\ 
\begin{tikzpictured}\draw[rhrhombidraw] (0.0pt,0.0pt) -- (4.619pt,8.0001pt) -- (0.0pt,16.0002pt) -- (-4.619pt,8.0001pt) -- cycle;\draw[thin,-my] (-4.619pt,0.0pt) arc (0:60:4.619pt) arc (60:120:4.619pt);\end{tikzpictured}
,\ or
\begin{tikzpictured}\draw[rhrhombidraw] (0.0pt,0.0pt) -- (4.619pt,8.0001pt) -- (0.0pt,16.0002pt) -- (-4.619pt,8.0001pt) -- cycle;\draw[thin,-my] (-11.5475pt,12.0002pt) arc (240:300:4.619pt) arc (300:360:4.619pt);\end{tikzpictured}
, or (2)
one of the three rhombi $\rhrhul$, $\rhrhpl$ or $\rhrhll$ is bad
such that $c$ uses 
\begin{tikzpictured}\draw[rhrhombidraw] (0.0pt,0.0pt) -- (4.619pt,8.0001pt) -- (0.0pt,16.0002pt) -- (-4.619pt,8.0001pt) -- cycle;\draw[thin,-my] (-4.619pt,16.0002pt) arc (180:240:4.619pt) arc (240:300:4.619pt);\draw[thin,-my] (2.3095pt,4.0001pt) arc (60:120:4.619pt) arc (300:240:4.619pt) arc (240:180:4.619pt) arc (0:60:4.619pt);\end{tikzpictured}%
,
\begin{tikzpictured}\draw[rhrhombidraw] (0.0pt,0.0pt) -- (4.619pt,8.0001pt) -- (0.0pt,16.0002pt) -- (-4.619pt,8.0001pt) -- cycle;\draw[thin,-my] (-11.5475pt,12.0002pt) arc (240:300:4.619pt) arc (120:60:4.619pt) arc (240:300:4.619pt);\draw[thin,-my] (2.3095pt,4.0001pt) arc (60:120:4.619pt) arc (300:240:4.619pt) arc (60:120:4.619pt);\end{tikzpictured}%
, or
\begin{tikzpictured}\draw[rhrhombidraw] (0.0pt,0.0pt) -- (4.619pt,8.0001pt) -- (0.0pt,16.0002pt) -- (-4.619pt,8.0001pt) -- cycle;\draw[thin,-my] (-11.5475pt,4.0001pt) arc (300:360:4.619pt) arc (180:120:4.619pt) arc (120:60:4.619pt) arc (240:300:4.619pt);\draw[thin,-my] (2.3095pt,4.0001pt) arc (60:120:4.619pt) arc (120:180:4.619pt);\end{tikzpictured}%
, respectively.
Additionally, as in (\ensuremath{\ast}\ensuremath{\ast}), this holds for the situation rotated by $180^\circ$.

\textbf{Case
\begin{tikzpictured}\draw[rhrhombidraw] (0.0pt,0.0pt) -- (4.619pt,8.0001pt) -- (0.0pt,16.0002pt) -- (-4.619pt,8.0001pt) -- cycle;\draw[thin,-my] (-6.9285pt,12.0002pt) --  (2.3095pt,12.0002pt);;\draw[thin,-my] (-4.619pt,16.0002pt) --  (0.0pt,8.0001pt);;\draw[thin,-my] (0.0pt,8.0001pt) arc (0:-60:4.619pt);\end{tikzpictured}%
:}
For each bad rhombus~$\rhc$ where $c$ uses exactly the turnvertices
\begin{tikzpictured}\draw[rhrhombidraw] (0.0pt,0.0pt) -- (4.619pt,8.0001pt) -- (0.0pt,16.0002pt) -- (-4.619pt,8.0001pt) -- cycle;\draw[thin,-my] (-6.9285pt,12.0002pt) --  (2.3095pt,12.0002pt);;\draw[thin,-my] (-4.619pt,16.0002pt) --  (0.0pt,8.0001pt);;\draw[thin,-my] (0.0pt,8.0001pt) arc (0:-60:4.619pt);\end{tikzpictured}%
\ in $\rhc$, the turncycle $c$ can be rerouted via 
\begin{tikzpictured}\draw[rhrhombidraw] (0.0pt,0.0pt) -- (4.619pt,8.0001pt) -- (0.0pt,16.0002pt) -- (-4.619pt,8.0001pt) -- cycle;\draw[thin,-my] (-4.619pt,16.0002pt) arc (180:240:4.619pt) arc (240:300:4.619pt);\end{tikzpictured}
\ and $\rhrhll$ or $\rhrhpl$ is bad.

\textbf{Remaining cases:}
Results that are analogous to the second case hold for $c$ using
\begin{tikzpictured}\draw[rhrhombidraw] (0.0pt,0.0pt) -- (4.619pt,8.0001pt) -- (0.0pt,16.0002pt) -- (-4.619pt,8.0001pt) -- cycle;\draw[thin,-my] (0.0pt,8.0001pt) --  (4.619pt,16.0002pt);;\draw[thin,-my] (-2.3095pt,12.0002pt) --  (6.9285pt,12.0002pt);;\draw[thin,-my] (2.3095pt,4.0001pt) arc (240:180:4.619pt);\end{tikzpictured}%
,
\begin{tikzpictured}\draw[rhrhombidraw] (0.0pt,0.0pt) -- (4.619pt,8.0001pt) -- (0.0pt,16.0002pt) -- (-4.619pt,8.0001pt) -- cycle;\draw[thin,-my] (4.619pt,0.0pt) --  (0.0pt,8.0001pt);;\draw[thin,-my] (6.9285pt,4.0001pt) --  (-2.3095pt,4.0001pt);;\draw[thin,-my] (0.0pt,8.0001pt) arc (180:120:4.619pt);\end{tikzpictured}%
, or
\begin{tikzpictured}\draw[rhrhombidraw] (0.0pt,0.0pt) -- (4.619pt,8.0001pt) -- (0.0pt,16.0002pt) -- (-4.619pt,8.0001pt) -- cycle;\draw[thin,-my] (2.3095pt,4.0001pt) --  (-6.9285pt,4.0001pt);;\draw[thin,-my] (0.0pt,8.0001pt) --  (-4.619pt,0.0pt);;\draw[thin,-my] (-2.3095pt,12.0002pt) arc (60:0:4.619pt);\end{tikzpictured}%
, respectively.
\end{minipage}
\end{equation}
We remark that the strange reroutings 
\begin{tikzpictured}\draw[rhrhombidraw] (0.0pt,0.0pt) -- (4.619pt,8.0001pt) -- (0.0pt,16.0002pt) -- (-4.619pt,8.0001pt) -- cycle;\draw[thin,-my] (-11.5475pt,4.0001pt) arc (300:360:4.619pt) arc (0:60:4.619pt);\end{tikzpictured}
,\ 
\begin{tikzpictured}\draw[rhrhombidraw] (0.0pt,0.0pt) -- (4.619pt,8.0001pt) -- (0.0pt,16.0002pt) -- (-4.619pt,8.0001pt) -- cycle;\draw[thin,-my] (-4.619pt,0.0pt) arc (0:60:4.619pt) arc (60:120:4.619pt);\end{tikzpictured}
,\ and
\begin{tikzpictured}\draw[rhrhombidraw] (0.0pt,0.0pt) -- (4.619pt,8.0001pt) -- (0.0pt,16.0002pt) -- (-4.619pt,8.0001pt) -- cycle;\draw[thin,-my] (-11.5475pt,12.0002pt) arc (240:300:4.619pt) arc (300:360:4.619pt);\end{tikzpictured}
\ occur in the cases
where $c$ uses
\begin{tikzpictured}\draw[rhrhombidraw] (0.0pt,0.0pt) -- (4.619pt,8.0001pt) -- (0.0pt,16.0002pt) -- (-4.619pt,8.0001pt) -- cycle;\draw[thin,-my] (2.3095pt,4.0001pt) arc (60:120:4.619pt) arc (300:240:4.619pt);\draw[thin,-my] (-6.9285pt,4.0001pt) --  (-11.5475pt,12.0002pt);;\draw[thin,-my] (-11.5475pt,4.0001pt) --  (-6.9285pt,12.0002pt);;\draw[thin,-my] (-6.9285pt,12.0002pt) arc (120:60:4.619pt) arc (240:300:4.619pt);\end{tikzpictured}
,\ 
\begin{tikzpictured}\draw[rhrhombidraw] (0.0pt,0.0pt) -- (4.619pt,8.0001pt) -- (0.0pt,16.0002pt) -- (-4.619pt,8.0001pt) -- cycle;\draw[thin,-my] (-9.238pt,8.0001pt) arc (180:120:4.619pt) arc (120:60:4.619pt) arc (240:300:4.619pt);\draw[thin,-my] (2.3095pt,4.0001pt) arc (60:120:4.619pt);\draw[thin,-my] (-2.3095pt,4.0001pt) --  (-11.5475pt,4.0001pt);;\draw[thin,-my] (-4.619pt,0.0pt) --  (-9.238pt,8.0001pt);;\end{tikzpictured}
,\ or
\begin{tikzpictured}\draw[rhrhombidraw] (0.0pt,0.0pt) -- (4.619pt,8.0001pt) -- (0.0pt,16.0002pt) -- (-4.619pt,8.0001pt) -- cycle;\draw[thin,-my] (2.3095pt,4.0001pt) arc (60:120:4.619pt) arc (300:240:4.619pt) arc (240:180:4.619pt);\draw[thin,-my] (-9.238pt,8.0001pt) --  (-4.619pt,16.0002pt);;\draw[thin,-my] (-11.5475pt,12.0002pt) --  (-2.3095pt,12.0002pt);;\draw[thin,-my] (-2.3095pt,12.0002pt) arc (240:300:4.619pt);\end{tikzpictured}
, respectively.

As in the proof of Key Lemma~\ref{keylem:ktt:secure},
(\ensuremath{\ast}\ensuremath{\ast}') can be seen to imply (\ensuremath{\ast}') by constructing a chain of bad rhombi.
\end{proof}
Theorem~\ref{thm:ktt} is completely proved.


\begin{thebibliography}{AMO93}

\bibitem[AMO93]{amo:93}
Ravindra~K. Ahuja, Thomas~L. Magnanti, and James~B. Orlin.
\newblock {\em Network flows: theory, algorithms, and applications}.
\newblock Prentice-Hall, Inc., Upper Saddle River, NJ, USA, 1993.

\bibitem[BI12]{bi:12}
Peter B{\"u}rgisser and Christian Ikenmeyer.
\newblock {D}eciding {P}ositivity of {L}ittlewood-{R}ichardson coefficients.
\newblock arXiv:1204.2484v1 [math.RT], 2012.

\bibitem[Buc00]{buc:00}
Anders~Skovsted Buch.
\newblock {T}he saturation conjecture (after {A}. {K}nutson and {T}. {T}ao)
  with an appendix by {W}illiam {F}ulton.
\newblock {\em Enseign. Math.}, 2(46):43--60, 2000.

\bibitem[BZ92]{bz:92}
A.~D. Berenstein and A.~V. Zelevinsky.
\newblock Triple multiplicities for sl(r + 1) and the spectrum of the exterior
  algebra of the adjoint representation.
\newblock {\em J. Algebraic Comb.}, 1(1):7--22, 1992.

\bibitem[DLM06]{deloera:06}
Jes{\'u}s~A. De~Loera and Tyrrell~B. McAllister.
\newblock On the computation of {C}lebsch-{G}ordan coefficients and the
  dilation effect.
\newblock {\em Experiment. Math.}, 15(1):7--19, 2006.

\bibitem[FM94]{fuma:94}
K.~Fukuda and T.~Matsui.
\newblock Finding all the perfect matchings in bipartite graphs.
\newblock {\em Appl. Math. Lett.}, 7(1):15--18, 1994.

\bibitem[Ful97]{fult:97}
William Fulton.
\newblock {\em {Y}oung tableaux}, volume~35 of {\em London Mathematical Society
  Student Texts}.
\newblock Cambridge University Press, Cambridge, 1997.

\bibitem[GLS93]{gll:93}
Martin Gr{\"o}tschel, L{\'a}szl{\'o} Lov{\'a}sz, and Alexander Schrijver.
\newblock {\em Geometric algorithms and combinatorial optimization}, volume~2
  of {\em Algorithms and Combinatorics}.
\newblock Springer-Verlag, Berlin, second edition, 1993.

\bibitem[KT99]{knta:99}
Allen Knutson and Terence Tao.
\newblock The honeycomb model of {${\rm GL}\sb n({\bf C})$} tensor products.
  {I}. {P}roof of the saturation conjecture.
\newblock {\em J. Amer. Math. Soc.}, 12(4):1055--1090, 1999.

\bibitem[KT01]{kt:01}
Allen Knutson and Terence Tao.
\newblock Honeycombs and sums of {H}ermitian matrices.
\newblock {\em Notices Amer. Math. Soc.}, 48(2):175--186, 2001.

\bibitem[KTT04]{ktt:04}
R.~C. King, C.~Tollu, and F.~Toumazet.
\newblock Stretched {L}ittlewood-{R}ichardson and {K}ostka coefficients.
\newblock In {\em Symmetry in physics}, volume~34 of {\em CRM Proc. Lecture
  Notes}, pages 99--112. Amer. Math. Soc., Providence, RI, 2004.

\bibitem[MS05]{GCT3}
Ketan~D. Mulmuley and Milind Sohoni.
\newblock Geometric complexity theory {I}{I}{I}: On deciding positivity of
  {L}ittlewood-{R}ichardson coefficients.
\newblock cs.ArXive preprint cs.CC/0501076, 2005.

\bibitem[Nar06]{nara:06}
Hariharan Narayanan.
\newblock On the complexity of computing {K}ostka numbers and
  {L}ittlewood-{R}ichardson coefficients.
\newblock {\em J. Algebraic Combin.}, 24(3):347--354, 2006.

\bibitem[Tar86]{Tar:86}
{\'E}va Tardos.
\newblock A strongly polynomial algorithm to solve combinatorial linear
  programs.
\newblock {\em Oper. Res.}, 34(2):250--256, 1986.

\end{thebibliography}
\end{document}